\newtheorem{thm}{Theorem}
\newtheorem{lem}{Lemma}
\newtheorem{prop}{Proposition}
\newtheorem{cor}{Corollary}
\newtheorem{rem}{Remark}
\newtheorem*{ent}{Entry}
\newtheorem*{fact}{Fact}
\author{
  Jacques Sauloy\footnote{E-mail address:jacques.sauloy@gmail.com,
    URL: http://www.cantoperdic.fr/},
    Changgui Zhang\footnote{Laboratoire P. Painlev\'e (UMR-CNRS 8524), UFR Math.,
    Universit\'e de Lille 1, Cit\'e scientifique,
    59655 Villeneuve d'Ascq cedex, France,
    E-mail address:changgui.zhang@univ-lille.fr,
    URL:https://math.univ-lille1.fr/~zhang/}}
\title{On the vanishing of coefficients of the powers of a theta function}
\def\C{{\mathbf C}}
\def\Q{{\mathbf Q}}
\def\Z{{\mathbf Z}}
\def\R{{\mathbf R}}
\def\N{{\mathbf N}}
\def\Cs{{\mathbf C^*}}
\def\Im{\text{Im}}
\def\lmod{\left |}            
\def\rmod{\right |}           
\def\tq{~ | ~}              
\def\ie{\emph{i.e.}}
\def\1{\underline{1}}         
\def\ii{{\text{i}}}
\def\Raq(d){{\C\{\xi\}_{q,(\delta)}}}
\def\Kaq(d){{\C(\{\xi\})_{q,(\delta)}}}
\def\thq{{\theta_q}}
\def\card{{\text{card}}}
\def\Do{{\overset{\circ}{\mathbf{D}}}}
\def\Df{{\overline{\mathbf{D}}}}
\def\m{{\underline{m}}}
\def\n{{\underline{n}}}
\def\x{{\underline{x}}}
\def\y{{\underline{y}}}
\def\u{{\underline{u}}}
\def\v{{\underline{v}}}
\def\b{{\underline{b}}}
\def\es{{\underline{\epsilon}}}
\def\tr{{}^t\!}
\newcommand {\cB}{\mathcal{B}} 
\newcommand {\cT}{\mathcal{T}} 
\begin{document}

\selectlanguage{french}

\maketitle


\selectlanguage{english}

\begin{abstract}
  A result on the Galois theory of $q$-difference equations \cite{JSTALPAEN}
  leads to the following question: if $q \in \Cs$, $\lmod q \rmod < 1$ and if
  one   sets $\thq(z) := \sum\limits_{m \in \Z} q^{m(m-1)/2} z^m$, can some coefficients
  of the Laurent series expansion of $\theta_q^k(z)$, $k \in \N^*$, vanish ?
  We give a partial answer.
\end{abstract}



\section{Introduction}
\label{section:introduction}


\subsection{Origin of the problem}

Let $q \in \C$ such that $\lmod q \rmod > 1$. In \cite{JSTALPAEN}, one uses
the function\footnote{Note however that the conventions in the present work 
are different, one assumes that $0 < \lmod q \rmod < 1$ and it is the formula
\eqref{eqn:deftheta} herebelow which defines $\thq$. Also, notation for the
coefficients will differ, see formula \eqref{eqn:formuleexactecoeffstheta}.}
$\thq(z) := \sum\limits_{m \in \Z} q^{-m(m+1)/2} z^m$, $z \in \Cs$.
This is a holomorphic function over $\Cs$. For every $k \in \N^*$, one introduces
the coefficients $t_n^{(k)}$, $n \in \Z$, through the expansion into a Laurent
series $\theta_q^k(z) = \sum t_n^{(k)} z^n$. These series converge normally over
every compact subset of the domain $\C \setminus \Df(0,1)$, whence the explicit
formula:
$$
t_n^{(k)} =
\sum_{m_1,\ldots,m_k \in \Z \atop m_1 + \cdots + m_k = n} q^{- (m_1(m_1+1) + \cdots + m_k(m_k+1))/2},
$$
which implies that each $t_n^{(k)}$ is,as a function of $q$, holomorphic over the
domain $\C \setminus \Df(0,1)$. Since $q > 0 \Rightarrow t_n^{(k)}(q) > 0$, the
function $t_n^{(k)}$ is not identically $0$, so the set of its zeroes is discrete,
whence denumerable. As a consequence, the set of all zeroes of all functions
$t_n^{(k)}$ is denumerable. \\

For all $q$ such that no $t_n^{(k)}(q)$ vanishes, the first author has described
in \cite{JSTALPAEN} the ``wild fundamental group'' of analytic complex linear
$q$-difference equations which are irregular at $0$ (and with arbitrary slopes).
This led to call ``good values'' of $q$ such values. As we just saw, this property
is generically satisfied. The goal of the present article is to study various cases
of vanishing of $t_n^{(k)}(q)$. \\

However, to stick as much as possible to the classical litterature on $q$-special
functions (above all Ramanujan's opus !), we shall assume in this work that
$\lmod q \rmod < 1$, $q \neq 0$, and one shall begin with the following definition:
\begin{equation}
\label{eqn:deftheta}
\thq(x) := \sum_{m \in \Z} q^{m(m-1)/2} x^m, \quad x \in \Cs,
\end{equation}
which again defines a holomorphic function over $\Cs$. For every $k \in \N^*$, we
shall denote $\gamma_{k,n}$, $n \in \Z$ the coefficients of the Laurent series expansion
of $\theta_q^k$:
\begin{equation}
\label{eqn:formuleexactecoeffstheta}
\theta_q^k(q) = \sum_{n \in \Z} \gamma_{k,n} q^n \Longrightarrow
\gamma_{k,n}(q) = \gamma_{k,n} = 
\sum_{m_1,\ldots,m_k \in \Z \atop m_1 + \cdots + m_k = n} q^{(m_1(m_1-1) + \cdots + m_k(m_k-1))/2},
\end{equation}

The relation to the formulas in \cite{JSTALPAEN} is clear: if $\lmod q \rmod > 1$
and if $p := q^{-1}$, then $\thq(z) = \theta_p(x)$ with $x := p z$, and moreover
$t_n^{(k)}(q) = \gamma_{k,n}(q^{-1})$; and the ``good values'' of $q$, $\lmod q \rmod > 1$,
are those such that none of the $\gamma_{k,n}(q^{-1})$ vanishes. \\

In this work, we shall first study $\thq$, $\lmod q \rmod < 1$, $q \neq 0$, and the
coefficients $\gamma_{k,n}$. However, in \ref{subsection:Puissance4} and then again
in \ref{section:-1} we shall rather use (for simplicity of the computations) another
of the classical theta functions\footnote{This definition requires an unambiguous
determination of $q^{1/2}$, the necessary conventions are explained in
\ref{subsubsection:conventionsPuissance4}.}:
\begin{equation}
\label{eqn:defvartheta}
\vartheta(x) := \vartheta_q(x) := \sum\limits_{n\in\Z} q^{n^2/2} x^n.
\end{equation}
We correspondingly introduce the coefficients of powers of $\vartheta$:
\begin{equation}
\label{eqn:formuleexactecoeffsvartheta}
\vartheta_q^k(z) = \sum_{n \in \Z} c_{k,n} z^n \Longrightarrow
c_{k,n}(q) = c_{k,n} =
\sum_{(n_1,...,n_k) \in \Z^k \atop n_1 + \cdots + n_k = n} q^{(n_1^2 + \cdots + n_k^2)/2}.
\end{equation}

From the obvious relations:
$$
\vartheta(x) = \thq(q^{1/2}x), \text{~whence~} c_{k,n}(q) = q^{n/2} \gamma_{k,n},
$$
we see how both studies are easily related. Our main results consist in a series
of sufficient conditions on $k,n$ for the existence of a zero of $\gamma_{k,n}$
or, equivalently, of $c_{k,n}$, in the punctured open disk $\Do(0,1) \setminus \{0\}$.
However we confess that we could not solve the main mystery: \emph{which are the bad
values of $q$ ?}


\subsection{First computations}
\label{subsection:Premierscalculs}

Obviously $\gamma_{1,n}(q) = q^{n(n-1)/2}$ never vanishes. The same is true for
$\gamma_{2,n}(q)$ thanks to the following calculation:
\begin{align*}
\gamma_{2,n}(q) &= \sum_{l+m=n} q^{(l(l-1) + m(m-1))/2} = \sum_{l+m=n} q^{n(n-1)/2} q^{-lm} 
= q^{n(n-1)/2} \sum_m q^{m(m-n)} \\
&= q^{n(n-1)/2} \sum_m (q^2)^{m(m-1)/2} (q^{-n+1})^m = q^{n(n-1)/2} \, \theta_{q^2}(q^{-n+1}),
\end{align*}
which can only vanish for $0 < \lmod q \rmod < 1$ by virtue of the triple product
formula along with the impossibility of relation $-q^{-n+1} \in q^\Z$. \\

Computing $\gamma_{2,n}$ easily yields a closed formula for $\theta_q^2$:
\begin{equation}
\label{eqn:premiereidentite}
\theta_q^2(x) = \theta_{q^2}(q) \theta_{q^2}(x^2) + x \theta_{q^2}(1) \theta_{q^2}(q x^2).
\end{equation}
Here is a direct argument (more general arguments will come later, see sections
\ref{subsection:RNBIII} and \ref{subsection:Puissance4}):
\begin{align*}
\theta_q^2(x) &= \sum_{n \in \Z} \gamma_{2,n}(q) x^n = 
\sum_{n \in \Z} q^{n(n-1)/2} \, \theta_{q^2}(q^{-n+1}) x^n \\
&= \sum_{m \in \Z} q^{m(2m-1)} \, \theta_{q^2}(q^{-2m+1}) x^{2m} +
\sum_{m \in \Z} q^{m(2m+1)} \, \theta_{q^2}(q^{-2m}) x^{2m+1} \\
&= \sum_{m \in \Z} q^{m(2m-1)} \, q^{-m(m+1)} \, q^m \, \theta_{q^2}(q) x^{2m} +
   \sum_{m \in \Z} q^{m(2m+1)} \, q^{-m(m+1)} \, \theta_{q^2}(1) x^{2m+1} \\
&= \theta_{q^2}(q) \sum_{m \in \Z} q^{m(m-1)} x^{2m} +
   \theta_{q^2}(1) \sum_{m \in \Z} q^{m^2} x^{2m+1} 
= \theta_{q^2}(q) \theta_{q^2}(x^2) + x \theta_{q^2}(1) \theta_{q^2}(q x^2).
\end{align*}
We used \emph{en passant} the relation $\thq(q^{-m} x) = q^{-m(m+1)/2} x^m \thq(x)$
(with $q \leftarrow q^2$). \\

It is difficult not to hope that such an identity can be extended to higher powers. 
According to Professor Bruce Berndt, powers of theta functions are tackled in
volume III of the ``Ramanujan Notebooks'' \cite{BerndtRamanujanNBIII}; and indeed,
he there gives the proof (\emph{Entry 29}) of such a relation from which the above
equality flows easily; and \emph{Entry 30} seems to give the clue to the calculation
of $\theta_q^3(z)$ and maybe of $\theta_q^4(z)$, but it is not quite clear. At any
rate, using \emph{Entries} 18 to 30 does allow a first approach to our problem, see
section \ref{subsection:RNBIII}. \\

Another kind of calculus, akin to the theory of divergent $q$-series \cite{zh3} and
of their transformations ($q$-Borel, $q$-Laplace \dots), allows one to iteratively
obtain such identities. We expound it in section \ref{subsection:Puissance4}. With
a bit of real analysis one can then deduce for instance that $\gamma_{3,0}(q)$ vanishes
for at least one negative value of $q$. Using some numerical/formal calculus with Maple,
one can make the value more precise. Note that this section rather uses function
$\vartheta$ in order to simplify the form of the identities; translating back
the results is immediate. \\

It is possible to recognize in the series such as that giving $\gamma_{3,0}(q)$ the
generating series of the number of representations by some quadratic form:
$$
\gamma_{3,0}(q) = \sum_{a,b,c \in \Z \atop a+b+c = 0} q^{(a^2 - a + b^2 - b + c^2 - c)/2} =
\sum_{a,b,c \in \Z \atop a+b+c = 0} q^{-(ab + bc + ca)} = \sum_{a,b \in \Z} q^{a^2 + ab + b^2} = f(q),
$$
where we have set
$f(x) := \sum\limits_{a,b \in \Z} x^{a^2 + ab + b^2} = \sum\limits_{n \geq 0} r(n) x^n$ for
$\lmod x \rmod < 1$; here $r(n)$ denotes the number of pairs $(a,b) \in \Z \times \Z$
such that $a^2 + ab + b^2 = n$ (the computation is wholly justified by the fact that
this is a positive definite quadratic form). The integer:
$$
R(n) := r(0) + \cdots + r(n) =
\text{card} \{(a,b) \in \Z \times \Z \tq a^2 + ab + b^2 \leq n\}
$$
is equivalent, when $n \to + \infty$, to $C n$, where $C$ is the area of the ellipse
$x^2 + xy + y^2 \leq 1$, \ie\ $2 \pi/\sqrt{3}$. By standard abelian theorems we draw
that, when $x \to 1^-$:
$$
\dfrac{f(x)}{1-x} = \sum_{n \geq 0} R(n) x^n \sim \sum C n x^n = C \dfrac{x}{(1-x)^2}
\Longrightarrow f(x) \sim \dfrac{C}{1-x} \cdot
$$
On the other hand, the exponent $a^2 + ab + b^2$ is even if, and only if $a$ and $b$
are. The even part of series $f(x)$ is therefore the sub-sum on the such pairs, that
is $f(x^4)$, so that $f(-x) = 2 f(x^4) - f(x)$. Hence, still for $x \to 1^-$:
$$
f(x^4) \sim \dfrac{C}{1-x^4} \sim \dfrac{C}{4(1-x)} \Longrightarrow
f(-x) \sim \dfrac{-C}{2(1-x)} \Longrightarrow
\lim_{x \to -1 \atop x > -1} f(x) = - \infty.
$$
Since $f(0) = 1$, the function $f(x)$, thus also the function $\gamma_{3,0}(q)$, vanishes
at least once on $\left]-1,0\right[$. One of our goals is to generalize this fact and
the ideas which led to it.


\subsection{A bit of theory}
\label{subsection:unpeudetheorie}


\subsubsection{Linear relations among theta powers and theta functions}
\label{subsubsection:linearrelations}

We shall write various sets of integers as:
$$
\N = \{0,1,\ldots\}, \quad
\N^* = \N \setminus \{0\} = \{1,2,\ldots\}, \quad
\Z = \N \cup (-\N) = \{0,1,-1,2,-2,\ldots\}.
$$

Let $k \in \N^*$. The $\C$-linear space of holomorphic functions over $\Cs$ such that
$f(qx) = x^{-k} f(x)$ has dimension $k$, as can be seen by expanding $f$ into a Laurent
series $\sum f_m x^m$: the latter is uniquely and linearly determined by coefficients
$f_0,\ldots,f_{k-1}$. The functions $x^i \theta_{q^k}(q^i x^k)$, $i = 0,\ldots,k-1$, belong
to that space and are linearly independent (their supports are pairwise disjoint). The
function $\theta_q^k$ also belongs to that space, whence a relation:
\begin{equation}
\label{eqn:identitegenerale}
\theta_q^k(x) = \sum_{i=0}^{k-1} a_{k,i} \, x^i \, \theta_{q^k}(q^i x^k).
\end{equation}
For instance identity \eqref{eqn:premiereidentite} at the end of the calculation at the
beginning of \ref{subsection:Premierscalculs} says that $a_{2,0} = \theta_{q^2}(q)$ and
$a_{2,1} = \theta_{q^2}(1)$. \\

Expanding the left hand side along with each term of the right hand side yields the
relations:
\begin{equation}
\label{eqn:relationsentrelescoeffs}
n = j k + i, 0 \leq i \leq k-1 \Longrightarrow
\gamma_{k,n} = (q^k)^{j(j-1)/2} q^{ij} a_{k,i}.
\end{equation}
In particular $a_{k,i} = \gamma_{k,i}$ ($i = 0,\ldots,k-1$) so our identity above
boils down to:
$$
\theta_q^k(x) = \sum_{i=0}^{k-1} \gamma_{k,i} \, x^i \, \theta_{q^k}(q^i x^k).
$$
Note that the functional equation $\theta_q^k(qx) = x^{-k} \theta_q^k(x)$ immediately
yields $\gamma_{k,n+k} = q^n \gamma_{k,n}$ which implies, by iteration, formula
\eqref{eqn:relationsentrelescoeffs}. \\

Thus, on the one hand vanishing of some $\gamma_{k,n}(q)$ only depends of the class of
$n$ modulo $k$; on the other hand, the $k$ complex numbers $a_{k,i}$, $i = 0,\ldots,k-1$
contain all relevant information. We are therefore led to look for explicit identities
(\ie\ with known coefficients) of the above type.


\subsubsection{Some easy facts about the coefficients}
\label{subsubsection:easyfacts}

We collect here some facts about the coefficients $\gamma_{k,n}$ (and therefore also
about the coefficients $c_{k,n}$) which follow directly from the explicit formulas
\eqref {eqn:formuleexactecoeffstheta} and \eqref{eqn:formuleexactecoeffsvartheta}. \\

First note that $n(n-1) \geq 0$ for all $n \in \Z$, and that it vanishes only for
$n = 0,1$. So $\gamma_{k,n}(q)$ involves only non negative powers of $q$ and the
power $q^0$ comes from terms with multiindex $(m_1,\ldots,m_k) \in \{0,1\}^k$
such that $m_1 + \cdots + m_k = n$. There are ${k \choose n}$ such terms, whence:
\begin{equation}
\label{eqn:DASgamma(q)en0}
\forall k \in \N^* \;,\; \forall n \in \Z \;,\; \gamma_{k,n}(q) = {k \choose n} + O(q).
\end{equation}
Second, note that in the summation \eqref{eqn:formuleexactecoeffstheta}, changing
$(m_1,\ldots,m_k)$ to $(1-m_1,\ldots,1-m_k)$ does not change the general term
$q^{(\cdots)}$, permutes $\Z^k$ and transforms the sum $n$ into $k-n$. Therefore:
\begin{equation}
\label{eqn:symetriegamma}
\forall k \in \N^* \;,\; \forall n \in \Z \;,\; \gamma_{k,n} = \gamma_{k,k-n}.
\end{equation}
Last, we give without proof an identity somehow related to the above symmetry;
it can be checked by elementary manipulation of indexes:
$$
\forall k \in \N^* \;,\; \forall n \in \{1,\ldots,k-1\} \;,\;
\gamma_{k,n} = \sum_{m \in \Z} \gamma_{n,m} \gamma_{k-n,m}.
$$


\subsection{Contents of the paper}
\label{subsection:contents}

In order to have information on the coefficients, we first look for identities
in section \ref{section:identitiesandcoeffs}. \\
Using Ramanujan's formulas in \ref{subsection:RNBIII} gives some information on
powers of $\thq$, unhappily not enough. \\
So we develop in \ref{subsection:Puissance4} a calculus inspired from the theory
of transformations of divergent $q$-series \cite{zh3}. However, this is more simply
done using function $\vartheta$ instead of $\thq$ (but the results are quite easy
to translate). This calculus allows us to find some preliminary information on
the vanishing of coefficients with some real analysis. \\

In \ref{section:asymptoticsandvanishing} we generalize the real analysis argument
through an asymptotic study of the coefficients of $\thq^k$, resp. $\vartheta^k$,
near $q = -1$. \\
First, in \ref{subsection:CSAnnulation} we relate said coefficients to generating
series of the numbers of representations by quadratic forms. Usual considerations
of geometry of numbers yield rather good estimations (theorem \ref{thm:estimasymptfnk})
and a first list of sufficient conditions for vanishing coefficients (corollary
\ref{cor:CSAnnulation}). Last, well known modular properties of the generating series
allow for some complementary information (proposition \ref{prop:CScomplementaire}). \\
Last, in \ref{section:-1} we return to the formalism for $\vartheta$ introduced in
\ref{subsection:Puissance4} and use the classical modular transformation formula to
obtain again asymptotic estimates (theorems \ref{theo:gamma} and \ref{theo:limite})
from which we deduce more sufficient conditions: actually, exactly what is needed
to recover corollary \ref{cor:CSAnnulation} and its consequences. \\

The difference between the two approaches towards the same sufficient conditions is
reflected in the nature of the asymptotic estimates: one is in terms of powers of
$t = \log (e^{-\pi \ii} q)$, the other is a transseries in the exponential scale. \\

Note that in the end one mysterious question remains unsolved: what can be said
of the ``bad values'' of $q$ ? Contrary to our first guesses, they are not all
real negative \dots


\subsection*{Acknowledgements}

The first author is indebted to Professor Berndt for suggesting to look at Ramanujan's
Notebooks and positive comments on first attempts. \\

The second author was supported by Labex CEMPI (Centre Europ\'een pour les Math\'ematiques,
la Physique et leurs Interaction).


\subsection*{Some more notations}

We already introduced $\N,\N^*,\Z$ at the beginning of
\ref{subsubsection:linearrelations}. \\
For every $k \in \N^*$ and $a \in \Z$, we write $a \pmod{k}$ either for the remainder
of the euclidean division of $a$ by $k$ (thus $a \pmod{k} \in \{0,\ldots,k-1\}$) or
for the congruence class of $a$ modulo $k$ (thus $a \pmod{k} \in \Z/k\Z$). The context
should hopefully lift any ambiguity. \\
In the same spirit, if$f: \Z \rightarrow \C$ is any $k$-periodic function, one writes
$\sum\limits_{a \!\! \pmod{k}} f(a)$ a sum taken over an arbitrary set of
representatives $a$ modulo $k$. \\
Last, we shall write in \underline{underlined characters} $k$-uples
$\n = (n_1,\ldots,n_k)$, whether in $\Z^k$, $\R^k$ \dots



\section{Identities and coefficients}

\label{section:identitiesandcoeffs}


\subsection{Some identities drawn from ``Ramanujan Notebooks''}
\label{subsection:RNBIII}

Our sole reference in this section is chapter 1 of volume III of the Ramanujan Notebooks
\cite{BerndtRamanujanNBIII}. For non zero complex numbers $a,b$ such that $\lmod ab \rmod < 1$,
Ramanujan introduces the function:
$$
f(a,b) := 1 + \sum_{k = 1}^{+\infty} (ab)^{k(k-1)/2} (a^k + b^k) =
\sum_{k=-\infty}^{+\infty} a^{k(k+1)/2} b^{k(k-1)/2},
$$
in which can be recognized, in a very symmetric guise, our function $\thq$. Indeed, setting
$q := ab$, one immediately sees that:  
$$
f(a,b) = \thq(a).
$$
We shall translate in terms of $\thq$ some identities of Ramanujan (among those for which
this is not already done in \cite{BerndtRamanujanNBIII}) then draw consequences.


\subsubsection{Translation of some identities from \cite{BerndtRamanujanNBIII}}
\label{subsubsection:translationRNBIII}

We give no proof of these translations, verifications are mechanical (even if the
proof of the identities themselves are not !). Actually, \emph{Entry 22} is directly
copied from \cite{BerndtRamanujanNBIII}. \\

We omitted the important \emph{Entries} 20 and 27, which express modularity, because
we do not take advantage of that property in the present section (it will appear in
section \ref{section:asymptoticsandvanishing}).

\begin{ent}[18]
\begin{enumerate}
\item $\thq(x) = \thq(q/x)$
\item $\thq(q) = 2 \theta_{q^4}(q) = 2 \theta_{q^4}(q^3)$
\item $\thq(-1) = \thq(-q) = 0$
\item $\thq(x) = q^{n(n-1)/2} x^n \thq(q^n x)$
\end{enumerate}
\end{ent}

The above equalities are elementary; the following is Jacobi's celebrated Triple Product
Formula. Recall the Pochhammer symbols $(x;q)_\infty := \prod\limits_{n \geq 0} (1 - x q^n)$.

\begin{ent}[19]
$\thq(x) = (q;q)_\infty (-x;q)_\infty (-q/x;q)_\infty$
\end{ent}

\begin{ent}[22]
\begin{enumerate}
\item $\phi(q) := \theta_{q^2}(q) = \sum\limits_{k \in \Z} q^{k^2} =
\dfrac{(-q;q^2)_\infty (q^2;q^2)_\infty}{(q;q^2)_\infty (-q^2;q^2)_\infty}$
\item $\psi(q) := \theta_{q^4}(q) = \sum\limits_{k =0}^\infty q^{k(k+1)/2} =
\dfrac{(q^2;q^2)_\infty}{(q;q^2)_\infty}$
\item $f(-q) := \theta_{q^3}(-q) = \sum\limits_{k \in \Z} (-1)^k q^{k(3k-1)/2} = (q;q)_\infty$
\item $\chi(q) := (-q;q^2)_\infty$ (note that this item is purely a definition)
\end{enumerate}
\end{ent}

The following one is related to the \emph{distribution formula} of Pochhammer symbols:
$\prod\limits_{k=0}^{n-1} (q^k x;q^n)_\infty = (x;q)_\infty$.

\begin{ent}[28]
$\prod\limits_{k=0}^{n-1} \theta_{q^n}(q^k x) = \dfrac{(q^n;q^n)_\infty^n}{(q;q)_\infty} \thq(x)$
\end{ent}

\emph{Entry 29} has been the most fuitful for our goals. The apparent dissymmetry
in $x$,$y$ disappears if one takes in account \emph{Entry 18}.

\begin{ent}[29]
\begin{enumerate}
\item $\thq(x) \thq(y)  + \thq(-x) \thq(-y) = 2 \theta_{q^2}(xy) \theta_{q^2}(qy/x)$ 
\item $\thq(x) \thq(y)  - \thq(-x) \thq(-y) = 2 x \theta_{q^2}(qxy) \theta_{q^2}(y/x)$ 
\end{enumerate}
\end{ent}

\begin{ent}[30]
\begin{enumerate}
\item $\theta_{q^2}(x) \theta_{q^2}(qx) = \thq(x) \psi(q)$
\item $\thq(x) + \thq(-x) = 2 \theta_{q^4}(qx^2)$
\item $\thq(x) - \thq(-x) = 2 x \theta_{q^4}(q^3 x^2)$
\item $\thq(x) \thq(-x) = \theta_{q^2}(-x^2) \phi(-q)$
\item $\theta_q^2(x) + \theta_q^2(-x) = 2 \theta_{q^2}(x^2) \phi(q)$
\item $\theta_q^2(x) - \theta_q^2(-x) = 4 x \theta_{q^2}(q x^2) \psi(q^2)$
\end{enumerate}
\end{ent}

The following corollary of \emph{Entry 30} (actually, seing its proof, rather of
\emph{Entry 29}) looks promising:
$$
\thq(x) \thq(y) \thq(nx) \thq(ny)  + \thq(-x) \thq(-y) \thq(-nx) \thq(-ny) =
2 x \thq(y/x) \thq(nxy) \thq(n) \psi(q).
$$
However, to exploit it, one would have wanted a formula involving the \emph{difference}
$$
\thq(x) \thq(y) \thq(nx) \thq(ny) - \thq(-x) \thq(-y) \thq(-nx) \thq(-ny)
$$
and we found no simple such formula. \\

At any rate, it seems difficult to find in \cite{BerndtRamanujanNBIII} equalities
involving products of functions $\thq$ with different bases $q$, which explains in
part the limited range of the following consequences with respect to our goals.


\subsubsection{Consequences for the powers of $\thq$}

One first proves anew identity \eqref{eqn:premiereidentite}:

\begin{prop}
$$
\theta_q^2(x) = \theta_{q^2}(q) \theta_{q^2}(x^2) + x \theta_{q^2}(1) \theta_{q^2}(qx^2).
$$
\end{prop}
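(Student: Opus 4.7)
The plan is to re-derive this identity directly from Ramanujan's \emph{Entry 29}, in order to stay inside the formalism of section \ref{subsubsection:translationRNBIII}. (It has already been established by brute expansion into Laurent series in \ref{subsection:Premierscalculs}, but the point here is to see it as a simple consequence of the Ramanujan identities that will later yield less obvious relations.)

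Specializing \emph{Entry 29} at $y = x$ makes the right-hand sides collapse nicely: part (1) becomes
\begin{equation*}
\theta_q^2(x) + \theta_q(-x)^2 = 2 \theta_{q^2}(x^2) \theta_{q^2}(q),
\end{equation*}
and part (2) becomes
\begin{equation*}
\theta_q^2(x) - \theta_q(-x)^2 = 2 x \theta_{q^2}(q x^2) \theta_{q^2}(1).
\end{equation*}
Adding these two equalities and dividing by $2$ gives exactly the claimed formula. The only point to check is that $\theta_q(-x) \thq(-y)\big|_{y=x} = \theta_q(-x)^2$, which is obvious.

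The main (essentially unique) obstacle is to realize that setting $y = x$ in \emph{Entry 29} is the right move; once this is done, the proof is two lines. Alternatively, one can start from \emph{Entries 30.5} and \emph{30.6}, substitute $\phi(q) = \theta_{q^2}(q)$ from \emph{Entry 22.1} and identify $2\psi(q^2) = \theta_{q^2}(1)$ (observed by splitting the sum defining $\theta_{q^2}(1)$ according to whether the index $m$ is $\geq 1$ or $\leq 0$, whereupon $m(m-1)$ in both cases becomes $k(k+1)$ with $k \geq 0$), and then add the two identities. Either route delivers the proposition without any new computation.
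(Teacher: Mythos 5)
Your proof is correct and is essentially identical to the paper's: the paper's primary argument is precisely to take the arithmetical mean of the two items of \emph{Entry 29} specialized at $y = x$, and your alternative route via \emph{Entries 30.5--30.6} and \emph{Entry 22} is also the paper's stated second option (the paper closes that route with \emph{Entry 18} where you instead verify $2\psi(q^2)=\theta_{q^2}(1)$ by splitting the sum, which is a harmless variation). Nothing to add.
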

\begin{proof}
The most direct path is to take the arithmetical mean of both items of \emph{Entry 29},
with $y = x$. One can also take the arithmetical mean of the last two items of
\emph{Entry 30}, then replace $\phi(q)$ and $\psi(q^2)$ according to the two first items
of \emph{Entry 22}, last apply the first two items of \emph{Entry 18}.
\end{proof}

As already noted, $a_{2,0} = \theta_{q^2}(q)$ et $a_{2,1} = \theta_{q^2}(1)$
(notations of \ref{subsection:unpeudetheorie}).

\begin{cor}
\label{cor:deuxiemeidentite}
\begin{align*}
a_{4,0} &= \theta_{q^2}(q)^2 \theta_{q^4}(q^2) + q \theta_{q^2}(1)^2 \theta_{q^4}(1), \\
a_{4,1} &= 2 \theta_{q^2}(1) \theta_{q^2}(q) \theta_{q^4}(q^3), \\
a_{4,2} &= \theta_{q^2}(q)^2 \theta_{q^4}(1) + \theta_{q^2}(1)^2 \theta_{q^4}(q^2), \\
a_{4,3} &= 2 \theta_{q^2}(1) \theta_{q^2}(q) \theta_{q^4}(q).
\end{align*}
\end{cor}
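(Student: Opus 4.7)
The plan is to compute $\theta_q^4(x)$ by squaring the identity of the proposition, then re-applying the proposition (with base $q^2$) to the two resulting squared theta factors, and handling the cross term by Entry~29.

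Concretely, set $P := \theta_{q^2}(q)\,\theta_{q^2}(x^2)$ and $Q := x\,\theta_{q^2}(1)\,\theta_{q^2}(qx^2)$ so that $\theta_q^2(x) = P + Q$ by the proposition. Squaring,
\[
\theta_q^4(x) = \theta_{q^2}(q)^2\,\theta_{q^2}^2(x^2) + 2x\,\theta_{q^2}(1)\theta_{q^2}(q)\,\theta_{q^2}(x^2)\theta_{q^2}(qx^2) + x^2\,\theta_{q^2}(1)^2\,\theta_{q^2}^2(qx^2).
\]
For the two pure squares I apply the same proposition with $q$ replaced by $q^2$ and with arguments $x^2$ and $qx^2$ respectively. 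This yields
\[
\theta_{q^2}^2(x^2) = \theta_{q^4}(q^2)\,\theta_{q^4}(x^4) + x^2\,\theta_{q^4}(1)\,\theta_{q^4}(q^2x^4),
\]
\[
\theta_{q^2}^2(qx^2) = \theta_{q^4}(q^2)\,\theta_{q^4}(q^2x^4) + qx^2\,\theta_{q^4}(1)\,\theta_{q^4}(q^4x^4).
\]
In the second line I normalize $\theta_{q^4}(q^4x^4)$ back to $\theta_{q^4}(x^4)$ via Entry~18 (iv) (or directly the functional equation $\theta_q(qy) = y^{-1}\theta_q(y)$ applied with base $q^4$), which gives $\theta_{q^4}(q^4x^4) = x^{-4}\theta_{q^4}(x^4)$, so the last term contributes $q\,\theta_{q^4}(1)\,\theta_{q^4}(x^4)$ after multiplication by $x^2$.

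The only nontrivial step is the cross term $\theta_{q^2}(x^2)\theta_{q^2}(qx^2)$. To produce terms in $\theta_{q^4}(q^i x^4)$ I use Entry~29 with the substitution $q \leftarrow q^2$, $x \leftarrow x^2$, $y \leftarrow qx^2$. Item (i) gives $2\,\theta_{q^4}(qx^4)\theta_{q^4}(q^3)$ and item (ii) gives $2x^2\,\theta_{q^4}(q^3 x^4)\theta_{q^4}(q)$; averaging them,
\[
\theta_{q^2}(x^2)\theta_{q^2}(qx^2) = \theta_{q^4}(q^3)\,\theta_{q^4}(qx^4) + x^2\,\theta_{q^4}(q)\,\theta_{q^4}(q^3x^4).
\]
This is precisely what is needed to produce the missing $\theta_{q^4}(q^3 x^4)$ summand (this is where I expect the one real subtlety of the argument: without the cross-term formula from Entry~29, one would only get three of the four $\theta_{q^4}(q^i x^4)$ terms, and it would be tempting but wrong to replace it by a simpler substitute such as Entry~30 (i), which gives a product involving $\theta_q$ instead of $\theta_{q^4}$).

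Plugging the three expansions back into the squared identity and collecting the coefficients of $\theta_{q^4}(x^4)$, $x\,\theta_{q^4}(qx^4)$, $x^2\,\theta_{q^4}(q^2x^4)$, $x^3\,\theta_{q^4}(q^3x^4)$ gives exactly the four stated formulas for $a_{4,0},a_{4,1},a_{4,2},a_{4,3}$; uniqueness of the expansion \eqref{eqn:identitegenerale} in the $k$-dimensional space of solutions of $f(qx) = x^{-k}f(x)$ (already recorded in \ref{subsubsection:linearrelations}) guarantees the identification.
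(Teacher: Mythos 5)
Your proof is correct and follows essentially the same route as the paper's: square the degree-two identity, expand the two pure squares by reapplying it with base $q^2$ (normalizing $\theta_{q^4}(q^4x^4)$ via the functional equation), and handle the cross term with \emph{Entry 29} averaged at $x\leftarrow x^2$, $y\leftarrow qx^2$. The only (harmless) difference is that you make explicit the appeal to uniqueness of the decomposition in the $k$-dimensional solution space, which the paper leaves implicit.
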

\begin{proof}
We square \eqref{eqn:premiereidentite}:
$$
\theta_q^4(x) =
(\theta_{q^2}(q))^2 (\theta_{q^2}(x^2))^2 + x^2 (\theta_{q^2}(1))^2 (\theta_{q^2}(qx^2))^2
+ 2 x \theta_{q^2}(1) \theta_{q^2}(q) \theta_{q^2}(x^2) \theta_{q^2}(qx^2).
$$
First term, directly from \eqref{eqn:premiereidentite} (replacing $q$ by $q^2$ and $x$ by $x^2$):
$$
(\theta_{q^2}(x^2))^2 = \theta_{q^4}(q^2) \theta_{q^4}(x^4) + x^2 \theta_{q^4}(1) \theta_{q^4}(q^2 x^4)
$$
Second term, directly from \eqref{eqn:premiereidentite} (replacing $q$ by $q^2$ and $x$ by $q x^2$):
$$
(\theta_{q^2}(qx^2))^2 =
\theta_{q^4}(q^2) \theta_{q^4}(q^2 x^4) + q x^2 \theta_{q^4}(1) \theta_{q^4}(q^4 x^4)=
\theta_{q^4}(q^2) \theta_{q^4}(q^2 x^4) + q x^{-2} \theta_{q^4}(1) \theta_{q^4}(x^4)
$$
Third term, appealling to \emph{Entry 29} with $x \leftarrow x^2$ and
$y \leftarrow q x^2$ (mean of both items):
$$
\theta_{q^2}(x^2) \theta_{q^2}(qx^2) =
\theta_{q^4}(q^3) \theta_{q^4}(q x^4) + x^2 \theta_{q^4}(q) \theta_{q^4}(q^3 x^4).
$$
Collecting:
\begin{align*}
\theta_q^4(x) &= (\theta_{q^2}(q))^2
\left(\theta_{q^4}(q^2) \theta_{q^4}(x^4) + x^2 \theta_{q^4}(1) \theta_{q^4}(q^2 x^4)\right) \\
&+ x^2 (\theta_{q^2}(1))^2
\left(\theta_{q^4}(q^2) \theta_{q^4}(q^2 x^4) + q x^{-2} \theta_{q^4}(1) \theta_{q^4}(x^4)\right) \\
&+ 2 x \theta_{q^2}(1) \theta_{q^2}(q)
\left(\theta_{q^4}(q^3) \theta_{q^4}(q x^4) + x^2 \theta_{q^4}(q) \theta_{q^4}(q^3 x^4)\right) \\
&=
\left(\theta_{q^2}(q)^2 \theta_{q^4}(q^2) + q \theta_{q^2}(1)^2 \theta_{q^4}(1)\right)
\theta_{q^4}(x^4) \\
&+ 2 \theta_{q^2}(1) \theta_{q^2}(q) \theta_{q^4}(q^3) x \theta_{q^4}(q x^4) \\
&+ \left(\theta_{q^2}(q)^2 \theta_{q^4}(1) + \theta_{q^2}(1)^2 \theta_{q^4}(q^2)\right)
x^2 \theta_{q^4}(q^2 x^4) \\
&+ 2 \theta_{q^2}(1) \theta_{q^2}(q) \theta_{q^4}(q) x^3 \theta_{q^4}(q^3 x^4).
\end{align*}
\end{proof}

Note that, since $\theta_{q^4}(q^3) = \theta_{q^4}(q)$, we have $a_{4,1} = a_{4,3}$ as was
to be expected by \eqref{eqn:symetriegamma}. Also note that various transformations are
possible; for instance, using \emph{Entry 18} one can check that
$$
a_{4,1} = a_{4,3} = \dfrac{1}{2} \thq(1)^3.
$$
In all cases, these coefficients vanish for no value of $q$. However, corollary
\ref{cor:CSAnnulation} in \ref{subsection:CSAnnulation} predicts that $a_{4,0}$ and
$a_{4,3}$ vanish for at least one $q < 0$ (but says nothing about $a_{4,1}$ and $a_{4,3}$). \\

The process can be (partially) generalized by squaring identity
\eqref{eqn:identitegenerale}.
By definition:
\begin{align*}
\theta_q^{2k}(x) &= \left(\sum_{i=0}^{k-1} a_{k,i} x^i \theta_{q^k}(q^i x^k)\right)^2 \\
&= \sum_{i,j=0}^{k-1} a_{k,i} a_{k,j} x^{i+j} \theta_{q^k}(q^i x^k) \theta_{q^k}(q^j x^k) \\
&= \sum_{i,j=0}^{k-1} a_{k,i} a_{k,j} x^{i+j} 
\left(\theta_{q^{2k}}(q^{i+j} x^{2k}) \theta_{q^{2k}}(q^{j-i+k}) + 
q^i x^k \theta_{q^{2k}}(q^{i+j+k} x^{2k}) \theta_{q^{2k}}(q^{j-i})\right).
\end{align*}
We used again consequence 
$\thq(x) \thq(y) = \theta_{q^2}(xy) \theta_{q^2}(qy/x) + x \theta_{q^2}(qxy) \theta_{q^2}(y/x)$ 
of \emph{Entry 29}. Replacing each term $\theta_{q^{2k}}(q^m y)$ such that $m \geq 2k$ by
$y^{-1} \theta_{q^{2k}}(q^{m-2k} y)$ and collecting carefully, one finds, for $l = 0,\ldots,2k-1$:
$$
a_{2k,l} = \sum_{i+j=l} a_{k,i} a_{k,j} \theta_{q^{2k}}(q^{k+j-i}) +
\begin{cases}
\sum_{i+j+k=l} a_{k,i} a_{k,j} q^{k-j} \theta_{q^{2k}}(q^{j-i}) \text{~if~} l \leq k-2, \\
\sum_{i+j+k=l} a_{k,i} a_{k,j} q^i \theta_{q^{2k}}(q^{j-i}) \text{~if~} l \geq k.
\end{cases}
$$
(There is no second term $\sum$ if $l = k-1$.) Taking $k = 2$ and using the values of 
$a_{2,0}$, $a_{2,1}$ given in \ref{subsection:unpeudetheorie}, one does recognize the
$a_{4,i}$ of corollary \ref{cor:deuxiemeidentite}. Iterative application of these formulas
would allow us to determine the $a_{k,i}$ when $k$ is a power of $2$; but even so, without
giving a closed formula. The methods of \ref{subsection:Puissance4} will prove more
to be effective.


\subsection{A calculus of coefficients of powers of theta}
\label{subsection:Puissance4}


\subsubsection{Notations and conventions for this subsection}
\label{subsubsection:conventionsPuissance4}

We suppose chosen $\tau \in \C$ such that $q=e^{2\pi i\tau}$ and $\Im\tau>0$, which
in particular gives an inambiguous meaning to non integral powers of $q$. In this
subsection, in order to have simpler formulas, we rather consider the following
theta function:
$$
\vartheta(x) := \vartheta_q(x) := \sum_{n\in\Z} q^{n^2/2} x^n = \thq\left(q^{1/2} x\right)\,.
$$
The series $\vartheta$ plainly represents an analytic function all over the punctured
complex plane. Let $k\in\N^*$ and $n\in\Z$. We write $c_{k,n}(q)$ the coefficient
of $x^n$ in the Laurent series expansion of $\vartheta^k(x)$; that is:
\begin{equation}
\label{equation:ckl}
c_{k,n}(q) = \sum_{(n_1,\ldots,n_k) \in \Z^k \atop n_1 + \cdots + n_k = n} q^{(n_1^2+\cdots+n_k^2)/2}.
\end{equation}
We therefore have:
\begin{equation}
\label{eqn:lienentrediverscoeffs}
c_{k,n} = q^{n/2} \, \gamma_{k,n}.
\end{equation}

Setting $n := 0$ in formula \eqref{equation:ckl}, we get:
\begin{equation}
\label{equation:ck20}
c_{k,0}(q) = 1 + k(k-1) q + O(q^2),
\end{equation}
where the constant term corresponds to $(0,\ldots,0)$ and where the coefficient $k(k-1)$
before $q$ comes from the fact that $k$-uples $(n_1,\ldots,n_k)$ such that
$n_1^2 + \cdots + n_k^2 = 2$, $n_1 + \cdots + n_k = 0$ are made of $(k-2)$ zeroes, one $+1$ and
one $-1$. More generally, after \eqref{eqn:DASgamma(q)en0} page \pageref{eqn:DASgamma(q)en0}:
\begin{equation}
\label{equation:ck21}
c_{k,n}(q) = q^{n/2} \,\left(\binom k n + O(q)\right).
\end{equation}

Actually, $\gamma_{k,n}(q) = q^{-n/2} \, c_{k,n}(q)$ represents an analytic function over
the unit disk, with value $\displaystyle \binom kn$ at $0$ and its Taylor series at $0$
has non negative integer coefficients.

\paragraph{Some notations.}

\begin{enumerate}
\item If $f = \displaystyle\sum_{n\in\Z} a_n x^n \in \C\left[\left[x,\frac1x\right]\right]$,
one puts:
$$
\cB f(x) := \cB_q f(x) =
\displaystyle \sum_{n\in\Z} a_n q^{n^2/2} x^n \in \C\left[\left[x,\frac1x\right]\right].
$$
\item If $\cB_q f$ represents a holomorphic function over $\C^*$, one puts:
$$
\cT f(x) = \cT _q f(x) =
\displaystyle \sum_{n\in\Z} \cB_qf(q^{-n}) x^n \in \C\left[\left[x,\frac1x\right]\right].
$$ 
\item For all $a \in \C^*$ and $k \in \N^*$, one denotes $\sigma_a f(x) := f(ax)$ and
$\rho_k f(x) := f(x^k)$.
\item  For $(a_1, ..., a_m) \in \C^m$, one puts:
$$
(a_1,...,a_m;q)_\infty := \prod_{n=1}^m (a_n; q)_\infty.
$$
(Pochhammer symbols $(a;q)_\infty$ were defined at the beginning of
\ref{subsubsection:translationRNBIII}.)
\end{enumerate}

\begin{rem}
\label{rq1}
\rm We here recall some useful facts:
\begin{enumerate}
\item
\label{thetaequation}
$\displaystyle\vartheta(1/x) = \vartheta(x)$;
 $\displaystyle\vartheta(q^nx) = q^{-n^2/2}x^{-n}\vartheta(x)$ for $n\in\Z$.
\item
\label{3produit}
$\displaystyle\vartheta(x) = (q,-\sqrt q\,x,-\frac{\sqrt q}{x};q)_\infty$.
\item
\label{thetazero}
$\vartheta(x)=0$ if, and only if $x\displaystyle\in -q^{\Z+\frac12}$.
\item
\label{split}
$\displaystyle\prod_{n=0}^{k-1}\vartheta_{q^k}(xq^{n}) =
\frac{(q^k;q^k)_\infty^k}{(q;q)_\infty}\,\vartheta(xq^{(k-1)/2})$
for $k\in\N^*$.
\item
\label{thetaB}
$\cB_{q^k}\vartheta_{q^m}=\vartheta_{q^{k+m}}$ for $k$, $m\in\N^*$.
\item
\label{x}
$\displaystyle\cB(x^n f)=q^{n^2/2}x^n\cB f$ for $n\in\Z$.
\item
$\sigma_a\cB=\cB\sigma_a$ for $a\in\C^*$.
\item
$\cB_q\rho_k=\rho_k\cB_{q^{k^2}}$ for $k\in\N^*$.
\end{enumerate}
\end{rem}

\begin{prop}
\label{prop1}
\rm
If $\cB_q f$ represents a holomorphic function over $\C^*$, one has: $\vartheta f = \cB(\cT f)$.
\end{prop}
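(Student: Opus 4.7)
The plan is to prove the identity by computing, for each $N\in\Z$, the coefficient of $x^N$ on both sides and checking that they agree. This reduces everything to a clean algebraic manipulation, with one small analytic point: we need to justify that the relevant series converge absolutely.

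Write $f = \sum_{m\in\Z} a_m x^m$. The coefficient of $x^N$ in the product $\vartheta \, f$ is formally the convolution
$$
[x^N](\vartheta f) = \sum_{m\in\Z} q^{(N-m)^2/2}\, a_m .
$$
First I would verify absolute convergence. By hypothesis, $\cB f(x) = \sum_m a_m q^{m^2/2} x^m$ represents a holomorphic function on $\C^*$, so its Laurent coefficients $a_m q^{m^2/2}$ decay faster than any geometric sequence as $|m|\to\infty$. Using the identity
$$
\frac{(N-m)^2}{2} = \frac{N^2}{2} - Nm + \frac{m^2}{2},
$$
we can rewrite each term as $q^{N^2/2}\,(a_m q^{m^2/2})\,q^{-Nm}$; since $q^{-Nm}$ grows only geometrically in $m$ while $a_m q^{m^2/2}$ decays super-geometrically, the series converges absolutely.

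This same identity immediately gives
$$
[x^N](\vartheta f) = q^{N^2/2} \sum_{m\in\Z} a_m\, q^{m^2/2}\, q^{-Nm} = q^{N^2/2}\,\cB f(q^{-N}),
$$
where the interchange that recognises the inner sum as the evaluation of $\cB f$ at $q^{-N}$ is justified by the absolute convergence above.

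On the other side, the definition $\cT f(x) = \sum_n \cB f(q^{-n})\, x^n$ gives, by the very definition of $\cB$,
$$
\cB(\cT f)(x) = \sum_{n\in\Z} \cB f(q^{-n})\, q^{n^2/2}\, x^n,
$$
so that $[x^N]\cB(\cT f) = q^{N^2/2}\,\cB f(q^{-N})$. Comparing the two expressions for $[x^N]$ yields $\vartheta f = \cB(\cT f)$. The only real obstacle is the convergence check described above; the rest is bookkeeping on the quadratic exponent.
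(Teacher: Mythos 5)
Your proof is correct and is precisely the ``immediate calculation'' that the paper's one-line proof alludes to: comparing the coefficient of $x^N$ on both sides via the identity $(N-m)^2/2 = N^2/2 - Nm + m^2/2$, which identifies both coefficients with $q^{N^2/2}\,\cB f(q^{-N})$. The convergence remark, justified by the hypothesis that $\cB f$ is holomorphic on $\Cs$, is the right (and only) analytic point to check.
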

\begin{proof}
Immediate calculation.
\end{proof}

For $f=\vartheta$, remark \ref{rq1}~\eqref{thetaB} says that $\cB\vartheta = \vartheta_{q^2}$,
which implies:
$$
\cT\vartheta(x)=\sum_{n\in\Z}\vartheta_{q^2}(q^{-n})x^n=\sum_{n\in\Z}\vartheta_{q^2}(q^{n})x^n\,.
$$
With the help of proposition \ref{prop1}, one finds the following expression:
\begin{equation}
\label{equation:2}
\vartheta^2(x) = \sum_{n\in\Z} \vartheta_{q^2}(q^{n})q^{n^2/2}\,x^n.
\end{equation}
According to remark \ref{rq1}~\eqref{thetazero}, we know that $\vartheta_{q^2}(q^n) \not= 0$
except if $q^n\in-q^{2\Z+1}$, which is impossible since$-1\notin q^\Z$. Therefore:

\begin{fact}
The series expanding $\vartheta^2$ has no null coefficient.
\end{fact}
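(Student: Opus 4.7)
The plan is to read off the $n$-th coefficient from the already-established expansion \eqref{equation:2} and show each factor is nonzero, which reduces to checking that $-1 \notin q^{\Z}$.

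First I would observe that formula \eqref{equation:2} gives, for every $n \in \Z$, the explicit value
\[
[x^n]\,\vartheta^2(x) \;=\; q^{n^2/2}\,\vartheta_{q^2}(q^n).
\]
Since $q \in \Cs$ with $0 < \lmod q \rmod < 1$, the prefactor $q^{n^2/2}$ is nonzero (the choice of $\tau$ in \ref{subsubsection:conventionsPuissance4} makes $q^{n^2/2}$ unambiguously defined). So the whole question reduces to whether $\vartheta_{q^2}(q^n)$ can vanish.

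Next I would apply the zero-locus description from remark \ref{rq1}\eqref{thetazero}, with $q$ replaced by $q^2$: the function $\vartheta_{q^2}$ vanishes on the set $-(q^2)^{\Z+\frac12}=-q^{2\Z+1}$. Therefore $\vartheta_{q^2}(q^n)=0$ would force $q^n \in -q^{2\Z+1}$, i.e. $q^{n-2m-1}=-1$ for some $m\in\Z$, equivalently $-1 \in q^{\Z}$.

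The last step is to rule this out: if $q^k=-1$ for some nonzero $k\in\Z$, then $\lmod q^k\rmod=1$, contradicting $0<\lmod q\rmod<1$; and $k=0$ gives $1=-1$, absurd. Hence $\vartheta_{q^2}(q^n)\neq 0$ for every $n\in\Z$, and the $n$-th Laurent coefficient of $\vartheta^2$ is nonzero. There is no real obstacle here—the only subtlety is making sure the substitution $q \leftarrow q^2$ in the zero-set description is handled correctly, which it is since $\lmod q^2\rmod <1$ as well.
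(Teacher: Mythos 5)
Your proof is correct and follows exactly the paper's own argument: read the coefficient off from \eqref{equation:2}, invoke the zero-locus description of remark \ref{rq1}~\eqref{thetazero} with $q$ replaced by $q^2$, and rule out $-1 \in q^{\Z}$ using $0 < \lmod q \rmod < 1$. You merely spell out the last two steps in more detail than the paper does.
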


In order to go on, let us introduce some more general notations. Let $k\in\N^*$.
Replacing $n$ by $1$ and then $\vartheta$ by $\vartheta^k$ in the second functional
equation of remark \ref{rq1}~\eqref{thetaequation}, we find that
$\sigma_q\vartheta^k(x)=q^{-k/2}x^{-k}\vartheta^k(x)$. Moreover, if
$T_{k,n}(x)=x^n\vartheta_{q^k}(x^kq^n)$ with $n\in\Z$, one has
$\sigma_qT_{k,n}(x)=q^{-k/2}x^{-k}T_{k,n}(x)$. This being said, the functions $T_{k,n}$ all
satisfy the same linear equation as $\vartheta^k$ (compare \ref{subsubsection:linearrelations}).


\subsubsection{A recurrence relation}

\begin{prop}\label{prop2}
For every $k\in\N^*$, we have:
\begin{equation}\label{equation:k}
 \vartheta^k(x)=\sum_{n=0}^{k-1}c_{k,n}(q)\,x^n\,\vartheta_{q^k}(x^kq^n)\,.
\end{equation}
Moreover, the coefficients $c_{k,n}(q)$ satisfy the following recurrence relation:
\begin{equation}\label{equation:ck}
  c_{k+1,n}(q) =
  \sum_{n'=0}^{k-1}c_{k,n'}(q)\,q^{(n-n')^2/2}\,\vartheta_{q^{k^2+k}}(q^{-kn+(k+1)n'})\,.
\end{equation}
\end{prop}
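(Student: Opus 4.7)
The plan for part (i) is to adapt verbatim the argument already used in section~\ref{subsubsection:linearrelations} for $\thq$. The $\C$-vector space $V_k$ of holomorphic functions on $\C^*$ satisfying $\sigma_q f(x) = q^{-k/2} x^{-k} f(x)$ has dimension $k$, as one sees by Laurent expansion (the coefficients $f_0,\ldots,f_{k-1}$ determine everything). The quasi-periodicity of $\vartheta$ and of $\vartheta_{q^k}$ recorded in remark~\ref{rq1}(1) shows that both $\vartheta^k$ and the $k$ functions $T_{k,n'}(x) := x^{n'} \vartheta_{q^k}(x^k q^{n'})$, for $n' = 0,\ldots,k-1$, belong to $V_k$. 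The $T_{k,n'}$ are linearly independent because the support of the Laurent expansion of $T_{k,n'}$ is contained in the residue class $n'+k\Z$, and these classes are pairwise disjoint. Hence $\vartheta^k = \sum_{n'=0}^{k-1} \alpha_{n'} T_{k,n'}$ for unique scalars $\alpha_{n'}$, and since the coefficient of $x^{n'}$ in $T_{k,n'}$ is $1$, matching coefficients of $x^{n'}$ on both sides yields $\alpha_{n'} = c_{k,n'}(q)$.

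For part (ii), I would write $\vartheta^{k+1} = \vartheta \cdot \vartheta^k$ and insert (i):
$$\vartheta^{k+1}(x) = \sum_{n'=0}^{k-1} c_{k,n'}(q)\, \vartheta(x)\, g_{n'}(x), \quad g_{n'}(x) := x^{n'} \vartheta_{q^k}(x^k q^{n'}).$$
Extracting the coefficient of $x^n$, the recurrence reduces to the identification $[x^n](\vartheta g_{n'}) = q^{(n-n')^2/2}\, \vartheta_{q^{k^2+k}}(q^{-kn+(k+1)n'})$. Proposition~\ref{prop1} will yield $\vartheta g_{n'} = \cB_q(\cT_q g_{n'})$ provided $\cB_q g_{n'}$ is holomorphic on $\C^*$, in which case $[x^n](\vartheta g_{n'}) = q^{n^2/2}\, \cB_q g_{n'}(q^{-n})$. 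So the heart of the proof is a closed form for $\cB_q g_{n'}$. Starting from the expansion $g_{n'}(x) = \sum_{j\in\Z} q^{kj^2/2 + n'j} x^{n'+kj}$ and applying $\cB_q$ coefficient-wise multiplies the $x^{n'+kj}$-term by $q^{(n'+kj)^2/2}$; expanding $(n'+kj)^2$ and collecting, the total exponent of $q$ becomes $\tfrac{k^2+k}{2}\,j^2 + (k+1)n'\,j + \tfrac{n'^2}{2}$, so
$$\cB_q g_{n'}(x) = q^{n'^2/2}\, x^{n'}\, \vartheta_{q^{k^2+k}}\bigl(x^k q^{(k+1)n'}\bigr),$$
which is manifestly holomorphic on $\C^*$. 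Substituting $x = q^{-n}$ and multiplying by $q^{n^2/2}$ yields exactly the claimed coefficient, and summation against $c_{k,n'}(q)$ for $n' = 0,\ldots,k-1$ produces \eqref{equation:ck}.

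The main obstacle, and the step that demands care, is precisely the closed-form evaluation of $\cB_q g_{n'}$: the argument $q^{(k+1)n'}$ of $\vartheta_{q^{k^2+k}}$ arises from the \emph{cross-term} $n'\cdot kj$ produced by expanding $(n'+kj)^2$, not from the factor $q^{n'}$ already sitting inside $\vartheta_{q^k}(x^k q^{n'})$. One can try to shortcut the calculation through items~(5)--(8) of remark~\ref{rq1} (using $\cB_q\rho_k = \rho_k \cB_{q^{k^2}}$, $\sigma_a \cB = \cB \sigma_a$ and $\cB_{q^k}\vartheta_{q^m} = \vartheta_{q^{k+m}}$), but the interaction between multiplication by the monomial $x^{n'}$ and $\cB_q$ is delicate; a direct series computation as above is both safer and more transparent.
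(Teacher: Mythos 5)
Your proof is correct and follows essentially the same route as the paper: part (i) is the linear-algebra argument the paper sets up just before the proposition, and part (ii) rests, as in the paper, on Proposition \ref{prop1} together with the computation $\cB_q\bigl(x^{n'}\vartheta_{q^k}(x^kq^{n'})\bigr)=q^{n'^2/2}x^{n'}\vartheta_{q^{k^2+k}}(x^kq^{(k+1)n'})$. The only organizational difference is that you extract the coefficient of $x^n$ directly via $[x^n](\vartheta g_{n'})=q^{n^2/2}\,\cB_q g_{n'}(q^{-n})$, whereas the paper re-indexes the resulting sum over $m$ modulo $k+1$ so as to rewrite $\vartheta\cdot T_{k,n'}$ in the level-$(k+1)$ basis before matching coefficients; both computations yield \eqref{equation:ck}.
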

\begin{proof}
We shall apply proposition \ref{prop1} to a function of the form
$f(x)=\vartheta_{q^k}(x^kq^n)$, where $k\in\N^*$ et $n\in\{0,...,k-1\}$.
Write $\vartheta_{q^k}(x^kq^n)=\sigma_{q^{n/k}}\rho_k\vartheta_{q^k}(x)$ and use remark
\ref{rq1}. We have:
$$
\cB\vartheta_{q^k}(x^kq^n)=\cB\sigma_{q^{n/k}}\rho_k\vartheta_{q^k}(x)= \cdots
=\vartheta_{q^{k^2+k}}(x^kq^n)\,.$$
We deduce that:
\begin{equation}
 \label{equation:kl}
 \vartheta(x)\,\vartheta_{q^k}(x^kq^n)=\cB(\cT f)(x)=
 \sum_{m\in\Z}\vartheta_{q^{k^2+k}}(q^{-mk+n})\,q^{m^2/2}\,x^n\,.
\end{equation}
If $m=(k+1)\nu+r$ with $\nu\in\Z$ and $r\in\{-n,1-n,...,k-n\}$, we write
$q^{-mk+n}=q^{-(k+1)k\nu-kr+n}$ and get:
$$
\displaystyle \vartheta_{q^{k^2+k}}(q^{-mk+n}) =
q^{-k(k+1)\nu^2/2}\,q^{(-kr+n)\nu}\,\vartheta_{q^{k^2+k}}(q^{-kr+n})\,.
$$
With $m^2=(k+1)^2\nu^2+2(k+1)r\nu+r^2$, it follows that:
$$
\vartheta_{q^{k^2+k}}(q^{-mk+n})\,q^{m^2/2}=
q^{(k+1)\nu^2/2}\,q^{(r+n)\nu}\,q^{r^2/2}\,\vartheta_{q^{k^2+k}}(q^{-kr+n})\,.
$$
Hence expression \eqref{equation:kl} becomes:
\begin{equation}
 \label{equation:kl0}
 \vartheta(x)\,\vartheta_{q^k}(x^kq^n) =
 \sum_{r=-n}^{k-n}q^{r^2/2}\,\vartheta_{q^{k^2+k}}(q^{-kr+n})\,x^r\,\vartheta_{q^{k+1}}(xq^{r+n})\,.
\end{equation}
Using \eqref{equation:k}, we deduce that \eqref{equation:kl0}:
$$
\vartheta^{k+1}(x) = \sum_{r=-n}^{k-n}q^{r^2/2}
\sum_{n=0}^{k-1} c_{k,n}(q)\,\vartheta_{q^{k^2+k}}(q^{-kr+n})\,x^{r+n}\,\vartheta_{q^{k+1}}(xq^{r+n}).
$$
This completes the proof of the recurrence relation \eqref{equation:ck}.
In this way, we finish the proof of proposition \ref{prop2}.
\end{proof}


\subsubsection{A symmetry relation}
\label{section:sym}

As already noted in \ref{subsubsection:easyfacts}, equation \eqref{eqn:symetriegamma},
$\gamma_{k,n}=\gamma_{k,k-n}$. Therefore, iff $0<n<k$, one has the following symmetry:
\begin{equation}
  \label{equation:Sym}
  c_{k,n}(q)=q^{n-k/2}\,c_{k,k-n}(q)\,.
\end{equation}

Let us explain how to use \eqref{equation:Sym} so as to shorten the expression 
of recurrence \eqref{equation:ck}. First assume that $k$ is odd $\ge 1$, and write $k=2k'+1$.
Grouping the terms in the sum from \eqref{equation:ck} according to whether $n'=0$ or
$1\le n'\le k'$ or $n'> k'$, one has:
\begin{equation}
\label{equation:ckk'}
\begin{split}
c_{k+1,n}(q) =
c_{k,0}(q)\,q^{n^2/2}&\,\vartheta_{q^{k^2+k}}(q^{-kn}) +
\sum_{n'=1}^{k'}\Bigl(c_{k,n'}(q)\,q^{(n-n')^2/2}\,\vartheta_{q^{k^2+k}}(q^{-kn+(k+1)n'}) \\
&+c_{k,k'+n'}(q)\,q^{(n-k'-n')^2/2}\,\vartheta_{q^{k^2+k}}(q^{-kn+(k+1)(k'+n')})\Bigr)\,.
\end{split}
\end{equation}
From \eqref{equation:Sym}, one has: $c_{k,k'+n'}(q)=q^{n'-1/2}\,c_{k,k'+1-n'}$.
Writing $k'+n'=k-1-k'+n'$, we draw $(k+1)(k'+n')=(k+1)k+(k+1)(n'-k'-1)$,
which yields:
\begin{equation*}
\begin{split}
\vartheta_{q^{k^2+k}}(q^{-kn+(k+1)(k'+n')})&=\vartheta_{q^{k^2+k}}(q^{-kn+(k+1)(n'-k'-1)+(k+1)k})  \\
&=q^{kn-(k+1)(n'-1/2)}\,\vartheta_{q^{k^2+k}}(q^{-kn+(k+1)(n'-k'-1)})\,.
\end{split}
\end{equation*}
Since $kn-(k+1)(n'-1/2)=(2k'+1)(n-n')+k'+1-n'$, one has:
$$q^{n'-1/2}\,
q^{(n-k'-n')^2/2}\,q^{kn-(k+1)(n'-1/2)}=q^{(n-n'+k'+1)^2/2}\,.
$$
As a consequence, if $n''=k'+1-n'$, one finds:
\begin{equation*}
 \begin{split}
c_{k,k'+n'}(q)\,q^{(n-k'-n')^2/2}&\,\vartheta_{q^{k^2+k}}(q^{-kn+(k+1)(k'+n')})\cr
&=c_{k,n''}\,q^{(n+n'')^2/2}\,\vartheta_{q^{k^2+k}}(q^{-kn-(k+1)n''})\,.
 \end{split}
\end{equation*}
When $n'$ goes from $1$ to $k'$, $n''$ runs along the same values in descending order.
Going back to formula \eqref{equation:ckk'}, one is led to the following remark.

\begin{fact}
If $k=2k'+1$ and $k' \in \N$, then:
\begin{equation}
\label{equation:ck_impair}
\begin{split}
c_{k+1,n}(q)=&c_{k,0}(q)\,q^{n^2/2}\,\vartheta_{q^{k^2+k}}(q^{kn})+\sum_{n'=1}^{k'}c_{k,n'}(q)\,\times\\
&\Bigl(q^{(n'-n)^2/2}\,\vartheta_{q^{k^2+k}}(q^{k(n'-n)+n'})+q^{(n'+n)^2/2}\,\vartheta_{q^{k^2+k}}(q^{k(n'+n)+n'})\Bigr)\,.
\end{split}
\end{equation}
In particular, for $n=0$, one has:
\begin{equation}\label{equation:ck_impair0}
c_{k+1,0}(q)=c_{k,0}(q)\,\vartheta_{q^{k^2+k}}(1)+2\sum_{n=1}^{k'}c_{k,n}(q)\,\,q^{n^2/2}\,\vartheta_{q^{k^2+k}}(q^{(k+1)n})\,.
\end{equation}
\end{fact}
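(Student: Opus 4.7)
The plan is to read the Fact as a repackaging of the recurrence \eqref{equation:ck} specialized to $k = 2k'+1$. First I would split the summation index $n' \in \{0,1,\ldots,2k'\}$ into three blocks: $n' = 0$, $1 \le n' \le k'$, and $k'+1 \le n' \le 2k'$, which gives formula \eqref{equation:ckk'} by bare inspection. The $n'=0$ contribution is $c_{k,0}(q)\,q^{n^2/2}\,\vartheta_{q^{k^2+k}}(q^{-kn})$, and by the reflection $\vartheta(x) = \vartheta(1/x)$ from Remark \ref{rq1}~\eqref{thetaequation} this equals $c_{k,0}(q)\,q^{n^2/2}\,\vartheta_{q^{k^2+k}}(q^{kn})$, matching the first term of \eqref{equation:ck_impair}.

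The heart of the argument is to bring the third block into the same shape as the second. I would substitute $n' = k'+m$ with $m \in \{1,\ldots,k'\}$, apply the symmetry \eqref{equation:Sym} in the form $c_{k,k'+m}(q) = q^{m-1/2}\,c_{k,k'+1-m}(q)$, and rewrite the theta argument as $q^{-kn+(k+1)(k'+m)} = q^{k(k+1)}\cdot q^{-kn+(k+1)(m-k'-1)}$ so that the quasi-periodicity from Remark \ref{rq1}~\eqref{thetaequation} (applied to $\vartheta_{q^{k(k+1)}}$) absorbs the large shift at the cost of an explicit $q$-power. Collecting the exponents then reduces to the short bookkeeping identity $kn - (k+1)(m-1/2) = (2k'+1)(n-m) + (k'+1-m)$, which turns the accumulated factor into $q^{(n+n'')^2/2}$ for $n'' := k'+1-m$. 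A second use of $\vartheta(x) = \vartheta(1/x)$ puts the remaining theta argument into the form $q^{k(n+n'')+n''}$.

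After relabelling $n''$ back to $n'$, the (transformed) third block is indexed over the same range $n' \in \{1,\ldots,k'\}$ as the second block, and the two may be combined inside a single sum, yielding exactly \eqref{equation:ck_impair}. For \eqref{equation:ck_impair0} I would then specialize $n = 0$: the two terms in the parentheses of \eqref{equation:ck_impair} collapse to the same expression $q^{n'^2/2}\,\vartheta_{q^{k^2+k}}(q^{(k+1)n'})$, supplying the factor $2$, while the $n'=0$ contribution reduces to $c_{k,0}(q)\,\vartheta_{q^{k^2+k}}(1)$. The only genuine obstacle is the exponent bookkeeping in the symmetry/quasi-periodicity step; every identity required is already in place (Remark \ref{rq1}, Proposition \ref{prop2}, and equation \eqref{equation:Sym}), so no new ingredient is needed.
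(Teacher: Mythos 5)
Your proposal is correct and follows essentially the same route as the paper: the same three-block split of the sum in \eqref{equation:ck} giving \eqref{equation:ckk'}, the same use of the symmetry \eqref{equation:Sym} combined with the quasi-periodicity and reflection of $\vartheta_{q^{k^2+k}}$, and the same exponent identity $kn-(k+1)(m-1/2)=(2k'+1)(n-m)+(k'+1-m)$ to merge the third block with the second. The specialization at $n=0$ is likewise handled exactly as in the paper.
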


Consider now the case when $k=2k'$ with $k' \in \N^*$. Using relation \eqref{equation:ck},
one can check that 
\begin{equation}
  \label{equation:ckk'1}
\begin{split}
c_{k+1,n}(q)=c_{k,0}(q)\,&q^{n^2/2}\,\vartheta_{q^{k^2+k}}(q^{-kn})+c_{k,k'}(q)\,q^{(n-k')^2/2}\,\vartheta_{q^{k^2+k}}(q^{-kn+(k+1)k'})\cr
+\sum_{n'=1}^{k'-1}&\Bigl(c_{k,n'}(q)\,q^{(n-n')^2/2}\,\vartheta_{q^{k^2+k}}(q^{-kn+(k+1)n'})\\
&+c_{k,k'+n'}(q)\,q^{(n-k'-n')^2/2}\,\vartheta_{q^{k^2+k}}(q^{-kn+(k+1)(k'+n')})\Bigr)\,.
\end{split}
\end{equation}
A similar analysis would entail the following remark.

\begin{fact}
 \label{remarque:ck_pair} If $k = 2k'$ et $k' \in \N^*$, then:
 \begin{equation}\label{equation:ck_pair}
\begin{split}
c_{k+1,n}(q)=&c_{k,0}(q)\,q^{n^2/2}\,\vartheta_{q^{k^2+k}}(q^{kn})+c_{k,k'}(q)\,q^{(n-k')^2/2}\,\vartheta_{q^{k^2+k}}(q^{k(k'-n)+k'})\\
&+\sum_{n'=1}^{k'-1}c_{k,n'}(q)\,\Bigl(q^{(n'-n)^2/2}\,\vartheta_{q^{k^2+k}}(q^{k(n'-n)+n'})\\&\qquad\qquad\qquad+q^{(n'+n)^2/2}\,\vartheta_{q^{k^2+k}}(q^{k(n'+n)+n'})\Bigr)\,.
\end{split}
\end{equation}
In particular, for $n=0$, one has:
\begin{equation}\label{equation:ck_pair0}
\begin{split}
c_{k+1,0}(q)=&c_{k,0}(q)\,\vartheta_{q^{k^2+k}}(1)+c_{k,k'}(q)\,q^{k^2/8}\,\vartheta_{q^{k^2+k}}(q^{(k+1)k/2})\\&+2\sum_{n=1}^{k'-1}c_{k,n}(q)\,\,q^{n^2/2}\,\vartheta_{q^{k^2+k}}(q^{(k+1)n})\,. 
\end{split}
\end{equation}
\end{fact}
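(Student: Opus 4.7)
The plan is to mirror, in the even case $k=2k'$, the derivation already carried out for the odd case $k=2k'+1$. The starting point is \eqref{equation:ckk'1}, itself obtained from the basic recurrence \eqref{equation:ck} by splitting the sum over $n' \in \{0,1,\ldots,k-1\}$ according to whether $n'=0$, $1\le n'\le k'-1$, $n'=k'$, or $n'=k'+n'$ with $1\le n'\le k'-1$. My task is to rewrite the terms indexed by $k'+n'$ so that they become contributions of $c_{k,n''}$ with $n''\in\{1,\ldots,k'-1\}$, and collect everything.

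The core manipulation is the following. From \eqref{equation:Sym} with $k=2k'$, one has $c_{k,k'+n'}(q)=q^{n'}c_{k,k'-n'}(q)$. Next, I use the quasi-periodicity $\vartheta_{q^{r}}(q^{r}y)=q^{-r/2}y^{-1}\vartheta_{q^{r}}(y)$ with $r=k^2+k$, together with the inversion $\vartheta_{q^{r}}(1/y)=\vartheta_{q^{r}}(y)$ from remark~\ref{rq1}\eqref{thetaequation}, to rewrite
$$
\vartheta_{q^{k^2+k}}\bigl(q^{-kn+(k+1)(k'+n')}\bigr)
=q^{-(k^2+k)/2}\,q^{kn+(k+1)n''}\,\vartheta_{q^{k^2+k}}\bigl(q^{k(n+n'')+n''}\bigr),
$$
where $n'':=k'-n'$. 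Multiplying by the two extra prefactors $q^{n'}$ (from the symmetry) and $q^{(n-k'-n')^{2}/2}$ (already present in \eqref{equation:ckk'1}), I expect the total exponent of $q$ to collapse to $(n+n'')^{2}/2$: this is the only nontrivial arithmetic check, and it relies crucially on $k'=k/2$ so that $k'-k/2=0$.

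Once this identity is in hand, changing the summation index from $n'$ to $n''$ (both range over $\{1,\ldots,k'-1\}$) turns the second family of terms inside the sum into $c_{k,n'}(q)\,q^{(n'+n)^{2}/2}\vartheta_{q^{k^2+k}}(q^{k(n'+n)+n'})$, while the first family $c_{k,n'}(q)\,q^{(n'-n)^{2}/2}\vartheta_{q^{k^2+k}}(q^{k(n'-n)+n'})$ comes directly from \eqref{equation:ckk'1} after observing $-kn+(k+1)n'=k(n'-n)+n'$. The boundary term $n'=0$ is handled by $\vartheta_{q^{k^2+k}}(q^{-kn})=\vartheta_{q^{k^2+k}}(q^{kn})$ (inversion), and the middle term $n'=k'$ matches the target term after the trivial identity $-kn+(k+1)k'=k(k'-n)+k'$. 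This gives \eqref{equation:ck_pair}.

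Formula \eqref{equation:ck_pair0} then follows by specializing $n=0$ in \eqref{equation:ck_pair}: the two halves of the sum coincide, producing the factor $2$; the $c_{k,k'}$-term simplifies via $(k/2)^2/2=k^2/8$ and $k\cdot k'+k'=(k+1)k/2$; and the $c_{k,0}$-term reduces to $c_{k,0}(q)\,\vartheta_{q^{k^2+k}}(1)$. The one step at which care is required is the arithmetic reassembly of the various $q$-powers into $q^{(n+n'')^{2}/2}$; everything else is reindexing and invoking the standard functional equations for $\vartheta_{q^{r}}$ recalled in remark~\ref{rq1}.
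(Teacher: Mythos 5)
Your proposal is correct and follows exactly the route the paper intends: it carries out, for $k=2k'$, the same symmetry-plus-quasi-periodicity manipulation that the paper details for the odd case (the paper itself only asserts ``a similar analysis would entail'' the even-case formula). The key arithmetic check — that $q^{n'}\cdot q^{(n-k'-n')^2/2}\cdot q^{-(k^2+k)/2}\cdot q^{kn+(k+1)n''}$ collapses to $q^{(n+n'')^2/2}$ with $n''=k'-n'$ — does hold, precisely because $n'+n''-k/2=0$, as you note.
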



\subsubsection{About coefficients $c_{k,n}(q)$ for $k=2$ or $3$}

Putting $k := 1$ in \eqref{equation:ck}, since $c_{1,0}(q)=1$, we find:
\begin{equation}
 \label{equation:c2}
c_{2,n}(q)=\vartheta_{q^2}(q^n)\,q^{n^2/2}\,,\quad n\in\{0,1\}\,. 
\end{equation}
Moreover, with $k=2$ and $\mu=-1$ in \eqref{equation:ck1}, we have:
$$
c_{2,n}(q)=
\frac{q^{n^2/2}\,x^{-n}}{2\,\vartheta_{q^2}(x^2)}\,\left(\vartheta^2(xq^{-n/2})+(-1)^n\,\vartheta^2(-xq^{-n/2})\right)\,.
$$
Let us set $x=q^{(n+1)/2}$, and recall that $\vartheta(-\sqrt q)=0$; one deduces the expression
$\displaystyle
c_{2,n}(q)=\frac{q^{-n/2}\,\vartheta^2(\sqrt q)}{2\,\vartheta_{q^2}(q^{n+1})}\,.
$
Comparing the latter with \eqref{equation:c2} for $n=0$ gives the following identity:
\begin{equation}
 \label{equation:c22}
\vartheta^2(\sqrt q)=2\,\vartheta_{q^2}(1)\,\vartheta_{q^2}(q)\,.
\end{equation}
Applying relation \ref{rq1}~\eqref{split} for $k=2$ and $x=1$, one has:
$$
\vartheta_{q^2}(1)\,\vartheta_{q^2}(q)=
\frac{(q^2;q^2)_\infty^2}{(q;q)_\infty}\,\vartheta(\sqrt q)
=\frac{(q^2;q^2)_\infty^2}{(q;q)_\infty}\,\vartheta(\sqrt q)\,.
$$
Identity \eqref{equation:c22} can then be read as follows:
\begin{equation}
 \label{equation:c23}
\vartheta(\sqrt q)=2\,\frac{(q^2;q^2)_\infty^2}{(q;q)_\infty}\,,
\end{equation}
where the factor $2$ looks almost unexpected \dots \\

Let us compute $c_{3,n}(q)$ for $n=0,1,2$ with the help of \eqref{equation:ck}:
$$
c_{3,n}(q) =
c_{2,0}(q)\,q^{n^2/2}\,\vartheta_{q^{6}}(q^{2n})+c_{2,1}(q)\,q^{(n-1)^2/2}\,\vartheta_{q^{6}}(q^{2n-3})\,.
$$
With \eqref{equation:c2}, one finds:
\begin{equation}
 \label{equation:c3}
c_{3,n}(q)=q^{n^2/2}\,\vartheta_{q^2}(1)\,\vartheta_{q^{6}}(q^{2n})+q^{\left(1+(n-1)^2\right)/2}\,\vartheta_{q^2}(q)\,\vartheta_{q^{6}}(q^{2n-3})\,.
\end{equation}
In particular, one notes that:
$$
c_{3,0}(q)=\vartheta_{q^2}(1)\,\vartheta_{q^{6}}(1)+q\,\vartheta_{q^2}(q)\,\vartheta_{q^{6}}(q^{3})\,.
$$
If one puts $\displaystyle f(q)=\vartheta_{q^2}(1)$ and $\displaystyle g(q)=\vartheta_{q^2}(q)$,
one has:
\begin{equation}
 \label{equation:c30}
c_{3,0}(q)=f(q)\,f(q^3)+q\,g(q)\,g(q^3)\,.
\end{equation}

The functions $f$ and $g$ may be written in the following form:
$$
f(q)=
1+2\displaystyle\sum_{n\ge 1}q^{n^2}\,,\quad g(q)=2+2\displaystyle\sum_{n\ge 1}q^{n(n+1)}\,,
$$
where the power series all have the unit circle as convergence boundary. If $q\to-1^-$,
one sees that $|f(q)|<1$ but $g(q)\to+\infty$. Considering expression \eqref{equation:c30},
one draws that $c_{3,0}(q)\to-\infty$ for $q\to-1^-$. Since $c_{3,0}(0)=1$, one infers:

\begin{fact}
$c_{3,0}(q)$  admits (at least) a zero over $]-1,0[$.
\end{fact}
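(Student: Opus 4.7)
The plan is to combine the value $c_{3,0}(0) = 1$ (from \eqref{equation:ck21}) with the behavior of $c_{3,0}(q)$ as $q \to -1^+$ and conclude via the intermediate value theorem applied to the analytic (in particular continuous) function $c_{3,0}$ on $(-1,0]$. Set $q = -u$ with $u \in (0,1)$ and analyze each factor in \eqref{equation:c30} separately.

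The key observation for $g$ is that every exponent $n(n+1)$ is even, hence
\[
g(-u) \;=\; 2 + 2\sum_{n\ge 1}(-u)^{n(n+1)} \;=\; 2 + 2\sum_{n\ge 1} u^{n(n+1)}
\]
is a series of strictly positive terms on $(0,1)$. For any fixed $N \in \N^*$ one has $g(-u) \ge 2 + 2\sum_{n=1}^{N} u^{n(n+1)}$; letting $u \to 1^-$ first and then $N \to \infty$ yields $g(-u) \to +\infty$, and the same holds for $g(-u^3)$ since $u^3 \to 1^-$ as well.

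The key observation for $f$ is the parity coincidence $n \equiv n^2 \pmod 2$, which gives
\[
f(-u) \;=\; 1 + 2\sum_{n\ge 1}(-1)^n u^{n^2} \;=\; 1 - 2u + 2u^4 - 2u^9 + \cdots,
\]
an alternating series whose terms $2u^{n^2}$ are strictly decreasing in $n$ for each $u \in [0,1)$. The standard Leibniz bounds then yield $1 - 2u \le f(-u) \le 1 - 2u + 2u^4$, so $|f(-u)| \le 3$ uniformly for $u \in [0,1)$; likewise $|f(-u^3)| \le 3$.

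Combining these facts in \eqref{equation:c30}, the term $f(-u)\,f(-u^3)$ stays bounded (by $9$) while $(-u)\,g(-u)\,g(-u^3) \to -\infty$ as $u \to 1^-$, so $c_{3,0}(-u) \to -\infty$. Together with $c_{3,0}(0) = 1 > 0$ and continuity on $(-1,0]$, the intermediate value theorem produces a zero in $\left]-1,0\right[$. The only slightly delicate step is the uniform bound on $|f|$, which rests entirely on the parity coincidence $n \equiv n^2 \pmod 2$ creating alternation in $f(-u)$, in sharp contrast with the total absence of alternation in $g(-u)$ (where $n(n+1) \equiv 0 \pmod 2$) that drives the divergence.
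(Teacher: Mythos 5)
Your proof is correct and follows essentially the same route as the paper: starting from \eqref{equation:c30}, the paper likewise observes that $\lmod f(q) \rmod < 1$ while $g(q) \to +\infty$ as $q \to -1$ in $\left]-1,0\right[$, deduces $c_{3,0}(q) \to -\infty$, and concludes by comparing with $c_{3,0}(0)=1$. You have merely made explicit the parity observations ($n^2 \equiv n$ and $n(n+1) \equiv 0 \pmod 2$) and the Leibniz bounds that the paper leaves implicit.
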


Using Maple, by chosing values of $n$ up to $N=600$, one more or less gets the graph of
$q\mapsto c_{3,0}(q)$ over interval $]-0.5,0.5[$. Depending on the look of the graph,
one can ask Maple to evaluate $c_{3,0}(q)$ for  $a=-0.163034$ and $b=-0.163033$.
Herebelow some of the numerical values. \\

For $N=400$:
$$
c_{3,0}(a)=-2.96590*10^{-6}\,,\quad c_{3,0}(b)=3.41022854*10^{-6}\,.
$$

For $N=500$:
$$
c_{3,0}(a)=-2.96589725*10^{-6}\,,\quad c_{3,0}(b)=3.41023*10^{-6}\,.
$$

For $N=600$:
$$
c_{3,0}(a)=-2.96589725*10^{-6}\,,\quad c_{3,0}(b)=3.41022854*10^{-6}\,.
$$

\begin{rem}
It seems that $c_{3,0}(q)$ vanishes somewhere between $a=-0.163034$ and $b=-0.163033$.
\end{rem}

Put $n=1$ in \eqref{equation:c3}; then:
\begin{equation}
\label{equation:c31}
c_{3,1}(q)=
q^{1/2}\,\left(\vartheta_{q^2}(1)\,\vartheta_{q^{6}}(q^{2})+\vartheta_{q^2}(q)\,\vartheta_{q^{6}}(q)\right)\,.
\end{equation}
We find likewise:
$$
c_{3,2}(q)=
q\left(q\,\vartheta_{q^2}(1)\,\vartheta_{q^{6}}(q^{4})+\vartheta_{q^2}(q)\,\vartheta_{q^{6}}(q)\right)\,.
$$
Noting that $\vartheta_{q^{6}}(q^{4})=\vartheta_{q^{6}}(q^{-4})=q^{-1}\,\vartheta_{q^{6}}(q^{2})$,
we get:
\begin{equation}
\label{equation:c32}c_{3,2}(q)=q^{1/2}\,c_{3,1}(q)\,.
\end{equation}
This is consistent with the symmetry stated in \eqref{equation:Sym}.


\subsubsection{About coefficients $c_{4,n}(q)$}

Putting $k'=1$ and $k=3$ in \eqref{equation:ck_impair}, we obtain that:
\begin{equation}
 \label{equation:cpas40}
\begin{split}
c_{4,n}(q)=&c_{3,0}(q)\,q^{n^2/2}\,\vartheta_{q^{12}}(q^{kn})+c_{3,1}(q)\,\times\\
&\Bigl(q^{(n-1)^2/2}\,\vartheta_{q^{12}}(q^{-3n+4})+q^{(n+1)^2/2}\,\vartheta_{q^{12}}(q^{3n+4})\Bigr)\,.
\end{split}
\end{equation}
For $n=0$, formula \eqref{equation:ck_impair0} implies:
$$
c_{4,0}(q)=c_{3,0}(q)\,\vartheta_{q^{12}}(1)+2\,c_{3,1}(q)\,\,q^{1/2}\,\vartheta_{q^{12}}(q^{4})\,.
$$
In account of \eqref{equation:c3}, one can check that:
\begin{equation}
 \label{equation:c40}
 \begin{split}\displaystyle
 c_{4,0}(q)=&\vartheta_{q^2}(1)\,\vartheta_{q^6}(1)\,\vartheta_{q^{12}}(1)+q\,\Bigl[\vartheta_{q^2}(q)\,\vartheta_{q^6}(q^{3})\,\vartheta_{q^{12}}(1)\\
 &+2\Bigl(\vartheta_{q^2}(1)\,\vartheta_{q^6}(q^2)+\vartheta_{q^2}(q)\,\vartheta_{q^6}(q)\Bigr)\vartheta_{q^{12}}(q^4)\Bigr]\,.  
 \end{split}
\end{equation}

It may be noted that both \eqref{equation:cpas40} and \eqref{equation:c40} are much more
complicated that the formulas found in Corollary \ref{cor:deuxiemeidentite}.



\section{Asymptotics of the powers of theta and vanishing of the coefficients}
\label{section:asymptoticsandvanishing}


\subsection{Some sufficient conditions for the vanishing of $\gamma_{k,n}(q)$}
\label{subsection:CSAnnulation}


\subsubsection{Main statement and reduction to an asymptotic estimation}

Recall that, for $k \geq 1$, underlined characters denote $k$-uples:
$\x = (x_1,\ldots,x_k)$. \\

We first introduce a linear and a quadratic form on $\R^k$:
$$
S(\x) := \sum_{1 \leq i \leq k} x_i, \quad
Q(\x) := \sum_{1 \leq i \leq j \leq k} x_i x_j =
\dfrac{1}{2}\left(\sum_{1 \leq i \leq k} x_i^2 + S(\x)^2\right).
$$
The last equality shows that $Q$ is a positive definite form.

\begin{lem}
\label{lem:tnk(q)=fnk(x)}
Let $k \in \N^*$ and $n \in \Z$. Put, for $x \in \C$, $\lmod x \rmod < 1$:
$$
f_{k,n}(x) := \sum_{\m \in \Z^k} x^{Q(\m) - n S(\m)}.
$$
Then:
\begin{equation}
\label{eqn:tnk(q)=fnk(x)}
\forall q \in \C \setminus \Df(0,1) \;,\; \gamma_{k+1,n}(q) = q^{n(n-1)/2} f_{k,n}(q).
\end{equation}
The function $f_{k,n}$ is holomorphic over the punctured open disk $\Do(0,1) \setminus \{0\}$
with a pole of order $n(n-1)/2$ at $0$ (meaning that if $n \in \{0,1\}$ then $f_{k,n}$ is
actually holomorphic at $0$).
\end{lem}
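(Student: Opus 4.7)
The strategy is to reduce the identity to a direct algebraic manipulation of the explicit formula \eqref{eqn:formuleexactecoeffstheta}, and then to deduce the analytic statement from the positive-definiteness of $Q$ together with the nonnegativity of the binomial coefficients $\binom{m}{2}$ for $m \in \Z$.

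First, I would apply \eqref{eqn:formuleexactecoeffstheta} with $k$ replaced by $k+1$, and parametrize the $(k{+}1)$-tuples $(m_1,\ldots,m_{k+1}) \in \Z^{k+1}$ subject to $m_1 + \cdots + m_{k+1} = n$ by letting $\m = (m_1,\ldots,m_k)$ range freely over $\Z^k$ and setting $m_{k+1} := n - S(\m)$. This is an obvious bijection, so it reduces the proof to rewriting the exponent of $q$ in the new variables.

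Second, using $m_{k+1} = n - S(\m)$ together with $m_1 + \cdots + m_{k+1} = n$, a short computation gives
$$
\sum_{i=1}^{k+1} \frac{m_i(m_i-1)}{2}
= \frac{1}{2}\sum_{i=1}^{k+1} m_i^2 - \frac{n}{2}
= \frac{1}{2}\Bigl(\sum_{i=1}^k m_i^2 + (n-S(\m))^2\Bigr) - \frac{n}{2}.
$$
Expanding and invoking the identity $Q(\m) = \frac{1}{2}\bigl(\sum m_i^2 + S(\m)^2\bigr)$ stated in the lemma, the right-hand side collapses to $Q(\m) - n S(\m) + n(n-1)/2$. Factoring out $q^{n(n-1)/2}$ from every term of the resulting sum then yields \eqref{eqn:tnk(q)=fnk(x)}.

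Third, for the analytic part, I would use that $Q$ is positive definite on $\R^k$ (as noted just before the lemma), hence there is $c>0$ with $Q(\m) \geq c\|\m\|^2$. Thus $Q(\m) - n S(\m) \to +\infty$ at least quadratically in $\|\m\|$, so on any compact subset of $\Do(0,1) \setminus \{0\}$ the general term $x^{Q(\m)-nS(\m)}$ has super-exponential decay in $\|\m\|$. Normal convergence gives holomorphy of $f_{k,n}$ on the punctured disk. Finally, reading the identity of step two the other way,
$$
Q(\m) - n S(\m) + \frac{n(n-1)}{2} = \sum_{i=1}^{k+1} \frac{m_i(m_i-1)}{2} \geq 0,
$$
since each summand is a nonnegative integer for $m_i \in \Z$; therefore every exponent in the defining series of $f_{k,n}$ is at least $-n(n-1)/2$, giving the announced bound on the order of the pole at $0$, and in particular holomorphy there when $n \in \{0,1\}$.

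There is no real obstacle: the content of the lemma is essentially a change of variables $(m_1,\ldots,m_{k+1}) \leftrightarrow (\m, n)$, and the only place to be careful is the algebraic rearrangement in step two, which must match the definitions of $S$ and $Q$ exactly.
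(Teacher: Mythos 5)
Your proof is correct and follows essentially the same route as the paper: eliminate the last summation variable via $m_{k+1} = n - S(\m)$ (the paper eliminates $m_0$ instead, which is the same computation) and complete the square to produce the exponent $Q(\m) - nS(\m) + n(n-1)/2$. You additionally spell out the holomorphy and pole-order claims, which the paper leaves implicit; that part is also sound, with the only caveat that $n(n-1)/2$ should be read as an upper bound on the pole order (for $n<0$ or $n>k+1$ the minimal exponent is not attained), exactly as you treat it.
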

\begin{proof}
From formula \eqref{eqn:formuleexactecoeffstheta} one draws the following calculation:
\begin{align*}
\gamma_{k+1,n}(q)
&= \sum_{m_0,\ldots,m_k \in \Z \atop m_0 + \cdots + m_k = n}
q^{\frac{m_0^2 + \cdots + m_k^2 - m_0 - \cdots - m_k}{2}} \\
&= \sum_{m_1,\ldots,m_k \in \Z}
q^{\frac{(m_1 + \cdots + m_k - n)^2 + m_1^2 + \cdots + m_k^2 - n}{2}} \\
&= \sum_{m_1,\ldots,m_k \in \Z}
q^{\frac{(m_1 + \cdots + m_k)^2 - 2 n (m_1 + \cdots + m_k) + n^2 + m_1^2 + \cdots + m_k^2 - n}{2}} \\
&= q^{n(n-1)/2} \sum_{m_1,\ldots,m_k \in \Z}
q^{\frac{(m_1 + \cdots + m_k)^2 - 2 n (m_1 + \cdots + m_k) + m_1^2 + \cdots + m_k^2}{2}} \\
&= q^{n(n-1)/2} \sum_{\m \in \Z^k} q^{Q(\m) - n S(\m)}.
\end{align*}
\end{proof}

\begin{thm}
\label{thm:estimasymptfnk}
Let $k \in \N^*$ and $n \in \Z$. Writing $\epsilon := (-1)^n$, one has, when $x \to 1^-$:
\begin{equation}
\label{eqn:estimasymptfnk}
f_{k,n}(-x) = \dfrac{\pi^{\frac{k}{2}} \sqrt{2}}{\sqrt{k+1}} \,
\dfrac{\cos((k + \epsilon)\pi/4)}{(1-x)^{\frac{k}{2}}}
+ O\left(\dfrac{1}{(1-x)^{\frac{k-1}{2}}}\right).
\end{equation}
\end{thm}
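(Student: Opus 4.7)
The plan is to split the sum defining $f_{k,n}(-x)$ according to residue classes modulo $2$ in $\Z^k$, analyze each piece by a Laplace/Gaussian-integral asymptotic as $x\to 1^-$, and then evaluate a character sum in closed form. First I would write $\m=2\m'+\es$ with $\es\in\{0,1\}^k$ and $\m'\in\Z^k$, and expand using $\es_i^2=\es_i$ to obtain
\[
Q(2\m'+\es)-nS(2\m'+\es)=4Q(\m')+L_\es(\m')+c_\es,
\]
where $L_\es$ is linear in $\m'$ with \emph{even} integer coefficients and $c_\es:=Q(\es)-nS(\es)\in\Z$. Evenness of $L_\es$ ensures that the sign of $(-x)^{Q(\m)-nS(\m)}$ depends only on $\es$, whence
\[
f_{k,n}(-x)=\sum_{\es\in\{0,1\}^k}(-1)^{c_\es}\,x^{c_\es}\,S_\es(x),
\qquad
S_\es(x):=\sum_{\m'\in\Z^k}x^{4Q(\m')+L_\es(\m')}.
\]

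Next I would estimate each $S_\es(x)$ as $x\to 1^-$. Setting $t:=-\log x\sim 1-x$ and completing the square in the form $4Q(\y)+L_\es(\y)=4Q(\y+\y_\es)-4Q(\y_\es)$, Poisson summation applied to the positive-definite Gaussian $\y\mapsto e^{-4tQ(\y+\y_\es)}$ gives
\[
S_\es(x)=\frac{\pi^{k/2}\,e^{4tQ(\y_\es)}}{(4t)^{k/2}\sqrt{\det A}}\,\bigl(1+O(e^{-c/t})\bigr),
\]
where $A$ is the matrix of $Q$ and $c>0$ depends only on $Q$. Here $A=\tfrac{1}{2}(I+J)$ with $J$ the all-ones matrix; its eigenvalues are $(k+1)/2$ (simple) and $1/2$ (multiplicity $k-1$), so $\sqrt{\det A}=\sqrt{k+1}/2^{k/2}$. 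The factors $e^{4tQ(\y_\es)}=1+O(t)$ and $x^{c_\es}=1+O(t)$ are absorbed in the remainder, yielding
\[
(-1)^{c_\es}x^{c_\es}S_\es(x)=(-1)^{c_\es}\,\frac{\pi^{k/2}}{2^{k/2}(1-x)^{k/2}\sqrt{k+1}}+O\!\left(\frac{1}{(1-x)^{(k-2)/2}}\right),
\]
and summing over $\es$ folds the entire $\es$-dependence into the character sum below.

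Then I would evaluate $\Sigma_k(n):=\sum_{\es}(-1)^{c_\es}$. On $\{0,1\}^k$, the identity $\es_i^2=\es_i$ gives $Q(\es)=s(s+1)/2$ with $s:=S(\es)$, so grouping $\es$ by $s$ and writing $\epsilon:=(-1)^n$,
\[
\Sigma_k(n)=\sum_{s=0}^{k}\binom{k}{s}(-1)^{s(s+1)/2}\,\epsilon^{s}.
\]
Using the elementary identity $(-1)^{s(s+1)/2}=\mathrm{Re}\bigl((1+i)i^{s}\bigr)$ (checked directly on $s\bmod 4$), this collapses to
\[
\Sigma_k(n)=\mathrm{Re}\bigl((1+i)(1+i\epsilon)^k\bigr)=2^{(k+1)/2}\cos\!\left(\frac{(k+\epsilon)\pi}{4}\right),
\]
handling the cases $\epsilon=\pm 1$ separately (using $(1+i)(1-i)=2$ when $\epsilon=-1$). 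Multiplying this by $\pi^{k/2}/\bigl(2^{k/2}(1-x)^{k/2}\sqrt{k+1}\bigr)$ gives precisely the constant $\pi^{k/2}\sqrt{2}\cos((k+\epsilon)\pi/4)/\sqrt{k+1}$ of \eqref{eqn:estimasymptfnk}.

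The hard part is the uniformity of the Gaussian asymptotic in Step 2: one must control the Poisson tail, the square-completion correction $e^{4tQ(\y_\es)}-1=O(t)$, and the shift $x^{c_\es}-1=O(t)$ \emph{simultaneously} for all $\es$, and check that each contributes only $O((1-x)^{-(k-2)/2})$, which is absorbed in the stated $O((1-x)^{-(k-1)/2})$. Everything else is a bookkeeping of elementary identities.
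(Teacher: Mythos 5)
Your proof is correct, and the skeleton is the same as the paper's (the same decomposition of $\Z^k$ modulo $(2\Z)^k$, the same reduction to the character sum $\sum_{\es}(-1)^{Q(\es)-nS(\es)}$, evaluated by grouping $\es$ according to $s = S(\es)$ and summing $(1\pm\ii)^k$ — your identity $(-1)^{s(s+1)/2}=\Re\bigl((1+\ii)\,\ii^{s}\bigr)$ is just a compact repackaging of the paper's computation with $s_0,\dots,s_3$). Where you genuinely diverge is the analytic engine for the inner sums: the paper estimates each $\Phi_{Q,L_{\es}}(x^4)$ by elementary geometry of numbers (proposition \ref{prop:cardinalLambdaM}: counting lattice points in ellipsoids by volume comparison) followed by an Abelian-type argument (proposition \ref{prop:sgpuissancesden}), whereas you complete the square and apply Poisson summation to a shifted Gaussian. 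Your route buys a much sharper remainder — the relative error in each $S_{\es}$ is $O(e^{-c/t})$, so the final error is really $O\bigl((1-x)^{1-k/2}\bigr)$ coming only from $e^{4tQ(\y_{\es})}$, $x^{c_{\es}}$ and $t$ versus $1-x$, comfortably inside the stated $O\bigl((1-x)^{(1-k)/2}\bigr)$ — and the uniformity worry you flag is harmless since $\es$ ranges over a finite set. The paper's route is more elementary (no Fourier analysis) and its intermediate result, proposition \ref{prop:estimationPhiFL}, is stated for a general positive definite $F$ and linear $L$; on the other hand the paper itself sets up exactly your Poisson computation in \ref{subsubsection:tentative} (formula \eqref{eqn:FSP}) to prove the complementary proposition \ref{prop:CScomplementaire}, so your argument in effect anticipates and unifies that later section with the main estimate. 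The constants check out: $\sqrt{\det A}=\sqrt{k+1}/2^{k/2}$ agrees with the paper's $D_Q=(k+1)2^{-k}$, and $2^{(k+1)/2}\cdot 2^{k/2}/2^{k}=\sqrt2$ reproduces the factor $\pi^{k/2}\sqrt2/\sqrt{k+1}$ of \eqref{eqn:estimasymptfnk}.
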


From \ref{subsubsection:deuxpropaux} on, most of the rest\footnote{Note however that
in \ref{subsubsection:tentative} we apply modular properties of generating series of
number of representations by quadratic forms to obtain some complementary informations
(proposition \ref{prop:CScomplementaire}).} of subsection \ref{subsection:CSAnnulation}
is devoted to the proof of theorem \ref{thm:estimasymptfnk}.

\paragraph{Consequences of theorem \ref{thm:estimasymptfnk}.}

We first discuss here consequences of this theorem of interest for our problem
(possibility of vanishing of the coefficients $\gamma_{k,n}$). Assume $k \geq 2$
and $0 \leq n \leq k$, so that $\gamma_{k,n}(0) = {k \choose n} > 0$ by formula
\eqref{eqn:DASgamma(q)en0} page \pageref{eqn:DASgamma(q)en0}. On the other hand,
the sign of $\gamma_{k,n}(q)$ as $q \to -1$ while staying within $\left]-1,0\right[$
is (after lemma \ref{eqn:tnk(q)=fnk(x)}) $(-1)^{n(n-1)/2} \times \cos k' \pi/4$, where
$k' := k - 1 + (-1)^n$; so this sign depends on $k \pmod 8 \in \{0,\ldots,7\}$
and on $n \pmod{4} \in \{0,\ldots,3\}$ (see our notational conventions at the
very end of section \ref{section:introduction}). More precisely:
$$
(-1)^{n(n-1)/2} = \begin{cases} +1 \text{~if~} n \!\!\!\!\!\! \pmod{4} \in \{0,1\}, \\
  -1 \text{~if~} n \!\!\!\!\!\! \pmod{4} \in \{2,3\}, \end{cases}
$$
and
$$
\text{~sign~} \cos k' \pi/4 = \begin{cases}
+1 \text{~if~} k' \!\!\!\!\!\! \pmod{8} \in \{7,0,1\}, \\  
~~~0 \text{~if~} k' \!\!\!\!\!\! \pmod{8} \in \{2,6\}, \\  
-1 \text{~if~} k' \!\!\!\!\!\! \pmod{8} \in \{3,4,5\}.
\end{cases}
$$
Now, $k' = k$ if $n$ is even and $k-2$ if $n$ is odd, so that we list the cases when
$\gamma_{k,n}(q) < 0$ as $q \to -1$
, we know that in
all these cases it must vanish somewhere.

\begin{cor}
\label{cor:CSAnnulation}
Assume $k \geq 2$ and $0 \leq n \leq k$. In all the following cases, $\gamma_{k,n}(q)$
vanishes for at least one value of $q \in \left]-1,0\right[$:
\begin{itemize}
\item $n \!\! \pmod{4} = 0$ and $k \!\! \pmod{8} \in \{3,4,5\}$,
\item $n \!\! \pmod{4} = 1$ and $k \!\! \pmod{8} \in \{5,6,7\}$,
\item $n \!\! \pmod{4} = 2$ and $k \!\! \pmod{8} \in \{7,0,1\}$,
\item $n \!\! \pmod{4} = 3$ and $k \!\! \pmod{8} \in \{1,2,3\}$,
\end{itemize}
All these conditions (\ie\ their logical disjunction) can be summarized in a unique one:
$$
k - 2n \!\!\!\! \pmod{8} \in \{3,4,5\}.
$$
\end{cor}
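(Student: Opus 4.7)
The plan is to combine Lemma \ref{lem:tnk(q)=fnk(x)} with Theorem \ref{thm:estimasymptfnk} to determine the sign of $\gamma_{k,n}(q)$ as $q \to -1^+$ along $\left]-1,0\right[$, compare with the known positive value $\gamma_{k,n}(0) = \binom{k}{n} > 0$, and invoke the intermediate value theorem on the open interval $\left]-1,0\right[$, on which $\gamma_{k,n}$ is a real-analytic real-valued function (since its Taylor coefficients are real, indeed non negative integers).

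Concretely, applying Lemma \ref{lem:tnk(q)=fnk(x)} with $k$ replaced by $k-1$ gives $\gamma_{k,n}(q) = q^{n(n-1)/2} f_{k-1,n}(q)$, so setting $q = -x$ with $x \to 1^-$ contributes a sign factor $(-1)^{n(n-1)/2}$ (times the positive quantity $x^{n(n-1)/2}$). Theorem \ref{thm:estimasymptfnk} then shows that $f_{k-1,n}(-x)$ blows up like a positive constant multiple of $\cos(k'\pi/4)(1-x)^{-(k-1)/2}$, with $k' := k - 1 + (-1)^n$; since $k \geq 2$ the leading power is nontrivial and the sign of $\gamma_{k,n}(-x)$ for $x$ close to $1^-$ coincides with the sign of $(-1)^{n(n-1)/2} \cos(k'\pi/4)$. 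Whenever this sign is strictly negative we conclude, as announced, that $\gamma_{k,n}$ has a zero on $\left]-1,0\right[$.

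All that is left is bookkeeping. I compute $(-1)^{n(n-1)/2}$ as $+1$ for $n \bmod 4 \in \{0,1\}$ and $-1$ for $n \bmod 4 \in \{2,3\}$. The sign of $\cos(k'\pi/4)$ depends on $k' \bmod 8$: it is $+1$ on $\{7,0,1\}$, zero on $\{2,6\}$, and $-1$ on $\{3,4,5\}$. Since $\epsilon = (-1)^n$ depends only on the parity of $n$, the value of $k' \bmod 8$ in each of the four classes $n \bmod 4$ is determined from $k \bmod 8$: one gets $k' \equiv k \pmod 8$ when $n$ is even and $k' \equiv k-2 \pmod 8$ when $n$ is odd. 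The four bulleted cases of the corollary arise by selecting those configurations for which the product $(-1)^{n(n-1)/2}\cos(k'\pi/4) < 0$; e.g.\ for $n \equiv 0 \pmod 4$ one needs $\cos(k\pi/4) < 0$, i.e.\ $k \bmod 8 \in \{3,4,5\}$, and similarly for the other three classes.

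Finally, I verify the compact form $k - 2n \bmod 8 \in \{3,4,5\}$. In each of the four classes $n \bmod 4 \in \{0,1,2,3\}$ one has $2n \equiv 0,2,4,6 \pmod 8$, so the condition $k - 2n \in \{3,4,5\} \pmod 8$ translates to $k \in \{3,4,5\}$, $\{5,6,7\}$, $\{7,0,1\}$, $\{1,2,3\} \pmod 8$ respectively, which is exactly the disjunction of the four bulleted cases. The only step requiring care is to ensure that the leading term in the asymptotic expansion is not itself zero when we want to conclude; this is automatic because the excluded residues $k' \bmod 8 \in \{2,6\}$ (where $\cos(k'\pi/4) = 0$) never appear in the enumerated cases where we claim a sign change, so in all cases of the corollary the leading coefficient is genuinely nonzero and the argument goes through without further work.
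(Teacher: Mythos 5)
Your proposal is correct and follows essentially the same route as the paper: the authors likewise combine Lemma \ref{lem:tnk(q)=fnk(x)} (with $k$ shifted by one) and Theorem \ref{thm:estimasymptfnk} to identify the sign of $\gamma_{k,n}(q)$ as $q\to-1^{+}$ with that of $(-1)^{n(n-1)/2}\cos(k'\pi/4)$, $k'=k-1+(-1)^n$, and then compare with $\gamma_{k,n}(0)=\binom{k}{n}>0$ via the intermediate value theorem. Your sign bookkeeping and the reduction to $k-2n \pmod 8 \in \{3,4,5\}$ agree with the paper's.
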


Note that the above conditions exclude \emph{a priori} the case $k = 2$ (because for
$n = 0,1,2$ one does not have $k - 2n \!\!\!\! \pmod{8} \in \{3,4,5\}$), so in what
follows we assume that $k \geq 3$ and write:
$$
Z_k := \{n \in \Z \tq \gamma_{k,n}(q) \text{~vanishes somewhere in~} \left]-1,0\right[\}.
$$
By the remark at the end of \ref{subsubsection:linearrelations}, $Z_k$ is $k$-periodic
\ie\ $n \in Z_k \Rightarrow n + k \Z \subset Z_k$. In order to use that fact, we introduce
some more notations:
\begin{align*}
X_k &:= \{n \in \{0,\ldots,k\} \tq k - 2n \!\!\!\! \pmod{8} \in \{3,4,5\}\}, \\
Y_k &:= X_k + k \Z.
\end{align*}

Note that $X_k$ is invariant under the symmetry $n \leftrightarrow k - n$, and therefore
so is $Y_k$: we shall not have to exploit further the symmetry expressed by equation
\eqref{eqn:symetriegamma}, it is already built-in. With the previous notations, we
thus have:

\begin{cor}
\label{cor:CSAnnulationMieux}
Let $k \geq 3$. Then $Y_k \subset Z_k$.
\end{cor}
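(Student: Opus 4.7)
The plan is to deduce this refinement directly from Corollary \ref{cor:CSAnnulation} together with the $k$-periodicity of the vanishing locus $Z_k$, so essentially no new analysis is needed — it is a purely combinatorial/bookkeeping upgrade.

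First, I would recall (and briefly re-derive) the periodicity of $Z_k$. From the functional equation $\theta_q^k(qx) = x^{-k} \theta_q^k(x)$ one obtains, as noted at the end of \ref{subsubsection:linearrelations}, the relation $\gamma_{k,n+k}(q) = q^n \gamma_{k,n}(q)$. For any $q \in \left]-1,0\right[$ one has $q^n \neq 0$, hence $\gamma_{k,n+k}(q) = 0$ if and only if $\gamma_{k,n}(q) = 0$. Iterating, the membership $n \in Z_k$ depends only on the class of $n$ modulo $k$; equivalently, $Z_k + k\Z = Z_k$.

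Next, I would apply Corollary \ref{cor:CSAnnulation}. Its statement is exactly that for $k \geq 2$ and any $n \in \{0,\ldots,k\}$ satisfying $k - 2n \pmod 8 \in \{3,4,5\}$ the coefficient $\gamma_{k,n}(q)$ vanishes for at least one $q \in \left]-1,0\right[$. By the very definition of $X_k$, this reads $X_k \subset Z_k$.

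Combining the two observations, $Y_k = X_k + k\Z \subset Z_k + k\Z = Z_k$, which is the desired inclusion. The hypothesis $k \geq 3$ plays no role in the argument itself; it only serves to avoid the degenerate case $k = 2$, where one checks directly that $X_2 = \emptyset$ (the residues $k - 2n \pmod 8$ for $n = 0,1,2$ are $2,0,6$), so that $Y_2 = \emptyset$ and the statement is vacuous. There is no real obstacle here — the whole content of the corollary has already been packed into Corollary \ref{cor:CSAnnulation} and into the elementary recursion $\gamma_{k,n+k} = q^n \gamma_{k,n}$.
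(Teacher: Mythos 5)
Your argument is correct and is exactly the paper's own: Corollary \ref{cor:CSAnnulation} gives $X_k \subset Z_k$, and the $k$-periodicity of $Z_k$ coming from $\gamma_{k,n+k} = q^n \gamma_{k,n}$ (already noted at the end of \ref{subsubsection:linearrelations}) then yields $Y_k = X_k + k\Z \subset Z_k$. Your remark that $k \geq 3$ only excludes the vacuous case $X_2 = \emptyset$ also matches the paper's discussion.
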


To make this more useful, we describe more precisely $X_k$, and, in some cases, $Y_k$.

\paragraph{Case $k$ even: $k = 8 a + 2 b$, $b \in \{0,1,2,3\}$.}

Then in condition $k - 2n \!\!\!\! \pmod{8} \in \{3,4,5\}$ only the remainder $4$
is possible:
$$
X_k = \{n \in \{0,\ldots,k\} \tq k \equiv 2n + 4 \pmod{8}\} =
\{n \in \{0,\ldots,k\} \tq n \equiv b + 2 \pmod{4}\}.
$$
For instance, one easily checks the following cases:
\begin{itemize}
\item If $k = 8 a$, then $X_k = 4 \, \{0,\ldots,2a\} + 2$ and $Y_k = 4 \Z + 2$.
\item If $k = 8 a + 4$, then $X_k = 4 \, \{0,\ldots,2a\}$ and $Y_k = 4 \Z$.
\end{itemize}

\begin{cor}
All $\gamma_{8a,4c+2}(q)$ and all $\gamma_{8a+4,4c}(q)$ vanish for some $q \in ]-1,0[$.
\end{cor}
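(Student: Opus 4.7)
The statement is really a direct specialization of Corollary~\ref{cor:CSAnnulationMieux} together with the two bulleted identities for $Y_k$ immediately preceding it. My plan therefore is first to verify those two identities for $Y_k$, then to read off the claim.

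The set-up is: for $k$ even of the form $k = 8a + 2b$ with $b \in \{0,1,2,3\}$, the discussion has already noted that $X_k = \{n \in \{0,\ldots,k\} \tq n \equiv b+2 \pmod 4\}$. I would specialize this to the two cases at hand. For $k = 8a$ we have $b = 0$, so $X_k$ consists of all $n \in \{0,\ldots,8a\}$ with $n \equiv 2 \pmod 4$, i.e.\ $X_k = \{2, 6, 10, \ldots, 8a-2\}$; these are precisely the representatives mod $8a$ of the residue classes in $4\Z + 2$, whence $Y_k = X_k + k\Z = 4\Z + 2$. For $k = 8a+4$ we have $b = 2$, so $X_k$ consists of the $n \in \{0,\ldots,8a+4\}$ with $n \equiv 0 \pmod 4$, i.e.\ $X_k = \{0, 4, 8, \ldots, 8a+4\}$; these cover every residue class in $4\Z$ mod $8a+4$, so $Y_k = 4\Z$.

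Next, I would invoke Corollary~\ref{cor:CSAnnulationMieux}, which asserts $Y_k \subset Z_k$ for all $k \geq 3$. Applied to $k = 8a$ (with $a \geq 1$, so that $k \geq 8 \geq 3$) this gives $4\Z + 2 \subset Z_{8a}$, meaning every coefficient $\gamma_{8a,\, 4c+2}(q)$ vanishes at some point of $]-1,0[$. Applied to $k = 8a+4$ (with $a \geq 0$, so that $k \geq 4 \geq 3$) it gives $4\Z \subset Z_{8a+4}$, so every coefficient $\gamma_{8a+4,\, 4c}(q)$ vanishes at some point of $]-1,0[$.

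There is no real obstacle: the hard analytic work (the asymptotic estimate of Theorem~\ref{thm:estimasymptfnk}, its sign analysis, and the $k$-periodicity built into $Y_k$) has already been done upstream. The only thing to watch is the congruence bookkeeping: one has to confirm that the arithmetic progressions $X_k$ inside $\{0,\ldots,k\}$ really do hit every residue class required to make $Y_k$ equal to $4\Z+2$ or $4\Z$, which is immediate from counting elements.
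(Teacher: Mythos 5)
Your proposal is correct and follows exactly the paper's route: the corollary is stated there as an immediate consequence of the computation of $X_k$ and $Y_k$ in the even cases $k=8a$ and $k=8a+4$ together with Corollary~\ref{cor:CSAnnulationMieux} ($Y_k \subset Z_k$), which is precisely what you do. Your congruence bookkeeping is sound (indeed slightly more careful than the paper's, whose displayed $X_{8a}=4\,\{0,\ldots,2a\}+2$ lists one representative, $8a+2$, falling outside $\{0,\ldots,8a\}$ — harmless, since $Y_k$ is unchanged).
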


Cases $k = 8a + 2$ and $k = 8 a + 6$ can be similarly described (but less simply).

\paragraph{Case $k$ odd: $k = 8 a + 2 b + 1$, $b \in \{0,1,2,3\}$.}

Then in condition $k - 2n \!\!\!\! \pmod{8} \in \{3,4,5\}$ only cases $3$ and
$5 \equiv -3$ are possible:
$$
X_k = \{n \in \{0,\ldots,k\} \tq k \equiv 2n \pm 3 \pmod{8}\} =
\{n \in \{0,\ldots,k\} \tq n \equiv b + 2 \text{~or~} b - 1 \pmod{4}\}.
$$
Again these cases can be similarly but less simply described. We just give an example:
if $k = 3$, we find that $X_3 = \{0,2\}$ and $Y_3 = 3 \Z \pm 1$.


\subsubsection{Two auxiliary propositions and an application to a generating series}
\label{subsubsection:deuxpropaux}

\paragraph{First auxiliary proposition.}

Let $L(\y) := \sum\limits_{1 \leq i \leq k} b_i y_i$ a linear form and
$F(\y) := \sum\limits_{1 \leq i \leq j \leq k} a_{i,j} y_i y_j$ a positive
definite quadratic form on $\R^k$. Write, for $M > 0$:
$$
E_M := \{\y \in \R^k \tq F(\y) + L(\y) \leq M\}
\quad \text{and} \quad \Lambda_M := E_M \cap \Z^k.
$$
Since $F$ is positive definite, $E_M$ is a compact ellipsoid and $\Lambda_M$
is a finite set. \\

The volume of $E_M$ is easily computed as follows. Let $V_k$ the volume of the
unit ball of $\R^k$ and let $D_F$ the discriminant of $F$. The matrix of $F$
writes $^{t} S S$, where $S$ is invertible, and $D_F = (\det S)^2$. Therefore
$F(\y) = (S \y)^2 := \langle S \y,S \y \rangle$. In the same way, $L(\y) = B \y$
for some line matrix $B$. If $\y_0 := \dfrac{1}{2} {}^t (B S^{-1})$,
one sees that $\langle S \y + \y_0,S \y + \y_0 \rangle^2 = F(\y) + L(\y) + \y_0^2$.
Thus, the mapping $\y \mapsto S \y + \y_0$ transforms $E_M$ into a ball with radius
$\sqrt{M + \y_0^2}$, the volume of which is $V_k (M + \y_0^2)^{k/2}$. On the other
hand, this affine linear mapping multiplies all volumes by
$\lmod \det S \rmod = D_F^{1/2}$. One concludes:
$$
\mu(E_M) = \dfrac{V_k}{D_F^{1/2}} (M + \y_0^2)^{k/2}.
$$

\begin{prop}
\label{prop:cardinalLambdaM}
Let $V_k$ the volume of the unit ball of $\R^k$ and let $D_F$ the discriminant
of $F$. Then, when $M \to + \infty$:
\begin{equation}
\label{eqn:cardinalLambdaM}
\card\ \Lambda_M = \dfrac{V_k}{D_F^{1/2}} M^{k/2} + O\left(M^{(k-1)/2}\right).
\end{equation}
\end{prop}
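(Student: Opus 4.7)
The plan is the classical comparison between lattice point counting and volume for a compact convex body, controlling the error by a boundary layer whose thickness is dictated by the unit cell of $\Z^k$.

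First I would introduce the (half-open or closed) unit cube $C := [-1/2,1/2]^k$ and note that the translates $\m + C$, for $\m \in \Z^k$, tile $\R^k$ up to measure zero, so
\[
\mu\!\left(\bigcup_{\m \in \Lambda_M}(\m + C)\right) = \card \Lambda_M.
\]
Writing $G(\y) := F(\y) + L(\y)$, the strategy is to show that the union $U_M := \bigcup_{\m \in \Lambda_M}(\m + C)$ is sandwiched between two ellipsoids $E_{M - c\sqrt{M}}$ and $E_{M + c\sqrt{M}}$ for some constant $c > 0$ depending only on $F,L,k$.

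For the containment $U_M \subset E_{M+c\sqrt{M}}$: if $\m \in \Lambda_M$ and $\y \in \m + C$, then $|\y - \m| \leq \sqrt{k}/2$. Since $G$ is a polynomial of total degree $2$, Taylor expansion gives $G(\y) = G(\m) + \nabla G(\m)\cdot(\y-\m) + F(\y - \m)$, and on the compact ellipsoid $E_M$ one has $|\nabla G(\m)| = O(\sqrt{M})$ (because $\partial E_M$ has linear extent $O(\sqrt{M})$ and $F$ is bounded below by a positive multiple of $|\cdot|^2$). Hence $G(\y) \leq M + c\sqrt{M}$ for a suitable $c$. Symmetrically, for the containment $E_{M - c\sqrt{M}} \subset U_M$: if $\y \in \R^k$ satisfies $G(\y) \leq M - c\sqrt{M}$, there is a unique $\m \in \Z^k$ with $\y \in \m + C$, and the same Taylor bound yields $G(\m) \leq G(\y) + c\sqrt{M} \leq M$, so $\m \in \Lambda_M$ and $\y \in U_M$.

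Taking Lebesgue measures and using the explicit formula $\mu(E_M) = \dfrac{V_k}{D_F^{1/2}}(M + \y_0^2)^{k/2}$ already derived in the excerpt, I get
\[
\dfrac{V_k}{D_F^{1/2}}(M - c\sqrt{M} + \y_0^2)^{k/2} \leq \card \Lambda_M \leq \dfrac{V_k}{D_F^{1/2}}(M + c\sqrt{M} + \y_0^2)^{k/2}.
\]
Expanding each side by the binomial/Taylor formula, both outer terms equal $\dfrac{V_k}{D_F^{1/2}} M^{k/2} + O(M^{(k-1)/2})$, which yields \eqref{eqn:cardinalLambdaM}. The main (and really only) obstacle is the bound $|\nabla G(\m)| = O(\sqrt{M})$ on $E_M$, which follows at once from the change of coordinates $\z = S\m + \y_0$ used to compute the volume: in those coordinates $G - G_{\min}$ becomes $|\z|^2$, so its gradient has norm $2|\z| = O(\sqrt{M})$, and pulling back by the fixed linear map $S$ multiplies this by a constant.
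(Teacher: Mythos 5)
Your proposal is correct and follows essentially the same route as the paper: both sandwich the union $\Lambda_M + [-1/2,1/2]^k$ between two ellipsoids $E_{M \pm c\sqrt{M}}$ and then compare volumes via the explicit formula and a Taylor expansion of $(M + O(\sqrt{M}))^{k/2}$. The only cosmetic difference is that you control the perturbation $G(\y+\u)-G(\y)$ by a gradient bound in the coordinates $\z = S\y + \y_0$, whereas the paper uses the triangle inequality for the norm $\sqrt{F}$ together with the bound $\lmod L \rmod \leq c\sqrt{F}$; both yield the same $O(\sqrt{M})$ boundary-layer estimate.
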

\begin{proof}
Let $E'_M := \Lambda_M + \left[-1/2,1/2\right]^k$, the volume $\mu(E'_M)$ of
which is $\card\ \Lambda_M$. We shall first determine $M',M'' > 0$ such that
$E_{M'} \subset E'_M \subset E_{M''}$, which will provide us uper and lower bound
for $\card\ \Lambda_M = \mu(E'_M)$. \\
Let $\y \in \Lambda_M$ and $\u \in \left[-1/2,1/2\right]^k$ and let us write
$\phi := F + L$. Triangle inequalities applied to the norm $\sqrt{F}$ imply:
$$
\sqrt{F(\y)} - a \leq \sqrt{F(\y)} - \sqrt{F(\u)} \leq \sqrt{F(\y + u)} \leq
\sqrt{F(\y)} + \sqrt{F(\u)} \leq \sqrt{F(\y)} + a,
$$
where $a := \dfrac{1}{2} \sqrt{\sum\limits_{1 \leq i \leq j \leq k} \lmod a_{i,j} \rmod}$,
because $F(\u) \leq a^2$ for every $\u \in \left[-1/2,1/2\right]^k$. One deduces:
$$
F(\y) - 2 a \sqrt{F(\y)} + a^2 \leq F(\y + \u) \leq F(\y) + 2 a \sqrt{F(\y)} + a^2.
$$
In the same way, $L(\y) - b \leq L(\y + \u) = L(\y) + L(\u) \leq L(\y) + b$,
where $b := \dfrac{1}{2} \sum\limits_{1 \leq i \leq k} \lmod b_i \rmod$. One draws:
$$
\phi(\y) + a^2 - b - 2 a \sqrt{F(\y)} \leq \phi(\y + \u) \leq
\phi(\y) + a^2 + b + 2 a \sqrt{F(\y)}.
$$
On the other hand, there exists a constant $c > 0$ such that
$\lmod L \rmod \leq c \sqrt{F}$ (continuity of the linear form $L$ with respect
to the topology defined by the norm $\sqrt{F}$, all this in finite dimension).
One deduces:
$$
\phi(\y) \geq F(\y) - c \sqrt{F(\y)} = (\sqrt{F(\y)} - c/2)^2 - c^2/4
\Longrightarrow
\sqrt{F(\y)} \leq c/2 + \sqrt{\phi(\y) + c^2/4}.
$$
In the end, we find, for $d' := a^2 - b - ac$ and $d'' := a^2 + b + ac$,
the bounds:
$$
\phi(\y) - 2 a \sqrt{\phi(\y) + c^2/4} + d' \leq \phi(\y + \u) \leq
\phi(\y) + 2 a \sqrt{\phi(\y) + c^2/4} + d''
$$
valid for all $\y \in \Lambda_M$ and $\u \in \left[-1/2,1/2\right]^k$. It is then
obvious that putting:
$$
M' := M - 2 a \sqrt{M + c^2/4} + d' \text{~and~} M'' := M + 2 a \sqrt{M + c^2/4} + d''
$$
one has indeed:
$$
E_{M'} \subset E'_M \subset E_{M''} \Longrightarrow
\mu(E_{M'}) \leq \card\ \Lambda_M \leq \mu(E_{M''}),
$$
said otherwise:
$$
\dfrac{V_k}{D_F^{1/2}} (M - 2 a \sqrt{M + c^2/4} + d' + \y_0^2)^{k/2}
\leq \card\ \Lambda_M \leq
\dfrac{V_k}{D_F^{1/2}} (M + 2 a \sqrt{M + c^2/4} + d'' + \y_0^2)^{k/2}.
$$
Since, for all $\alpha,\beta,\gamma \in \R$, one has:
$$
(M + \alpha \sqrt{M + \beta} + \gamma)^{\frac{k}{2}} =
M^{\frac{k}{2}} \left(1 + O\left(M^{\frac{-1}{2}}\right)\right)^{\frac{k}{2}} =
M^{\frac{k}{2}} + O\left(M^{\frac{k-1}{2}}\right),
$$
the desired estimation follows.
\end{proof}

\paragraph{Second auxiliary proposition.}

\begin{prop}
\label{prop:sgpuissancesden}
For any $\alpha > 0$ and $x \in \C$, $\lmod x \rmod < 1$, let
$S_\alpha(x) := \sum\limits_{n \geq 0} n^\alpha x^n$. Then, when $x \to 1^-$:
\begin{equation}
\label{eqn:sgpuissancesden}
S_\alpha(x) = \dfrac{\Gamma(\alpha+1)}{(1-x)^{\alpha+1}} +
O\left(\dfrac{1}{(1-x)^\alpha}\right).
\end{equation}
\end{prop}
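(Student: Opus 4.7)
The plan is to transform the sum into a Laplace-type expression and then compare it to the Gamma integral $\int_0^\infty t^\alpha e^{-st} dt = \Gamma(\alpha+1)/s^{\alpha+1}$. Set $s := -\log x > 0$, so that $s \to 0^+$ as $x \to 1^-$ and $x^n = e^{-ns}$. Writing $f_s(t) := t^\alpha e^{-st}$, the sum (whose $n=0$ term vanishes since $\alpha > 0$) becomes $S_\alpha(x) = \sum_{n \geq 1} f_s(n)$, which we view as a Riemann-type sum for $\int_0^\infty f_s(t) dt$.

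The first and only substantial step is to estimate the error
$$R(s) := \left| \sum_{n \geq 0} f_s(n) - \int_0^\infty f_s(t) dt \right|.$$
The function $f_s$ is unimodal on $\R_+$, attaining its maximum $M_s = (\alpha/s)^\alpha e^{-\alpha}$ at $t_0 = \alpha/s$; by applying the standard monotone-comparison inequality on each of the two monotonicity intervals $[0,t_0]$ and $[t_0,\infty)$ separately, one gets $R(s) \leq 2 M_s = O(s^{-\alpha})$. (Equivalently, one may bound $R(s)$ by $\int_0^\infty |f_s'(t)| dt$ and split $f_s'$ into its two natural pieces, each integral being a Gamma integral equal to $\Gamma(\alpha+1)/s^\alpha$.) Using the computed value of the integral, this yields
$$S_\alpha(x) = \frac{\Gamma(\alpha+1)}{s^{\alpha+1}} + O\!\left(\frac{1}{s^\alpha}\right).$$

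The second step is purely algebraic: pass from the $s$-scale to the $(1-x)$-scale. From $1-x = 1 - e^{-s} = s(1 - s/2 + O(s^2))$ one gets $s = (1-x)(1 + O(1-x))$, whence
$$\frac{1}{s^{\alpha+1}} = \frac{1}{(1-x)^{\alpha+1}} + O\!\left(\frac{1}{(1-x)^\alpha}\right) \quad \text{and} \quad \frac{1}{s^\alpha} = O\!\left(\frac{1}{(1-x)^\alpha}\right),$$
which, combined with the first step, gives formula \eqref{eqn:sgpuissancesden}.

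I do not expect any genuine obstacle here: the entire argument is the elementary Euler--Maclaurin/Abel heuristic, and the only care needed is in the unimodality bound (to ensure the error stays at order $s^{-\alpha}$ rather than $s^{-\alpha-1}$). If one wished, the same estimate could also be derived from Karamata's Tauberian theorem applied to the monotone coefficients $n^\alpha$, but the direct comparison above is shorter and gives an explicit error term.
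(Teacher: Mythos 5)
Your proof is correct and follows essentially the same route as the paper: compare $\sum n^\alpha e^{-ns}$ with the Gamma integral $\int_0^\infty t^\alpha e^{-st}\,dt=\Gamma(\alpha+1)/s^{\alpha+1}$, bound the error by a constant times the maximum $(\alpha/(se))^\alpha=O(s^{-\alpha})$ of the unimodal integrand via monotone comparison on the two monotonicity intervals, and then pass from $-\ln x$ to $1-x$. The only (cosmetic) difference is that you apply the comparison directly for arbitrary $s$, whereas the paper first restricts to values of $a=-\ln x$ for which the maximum point $\alpha/a$ is an integer and then extends by monotonicity of $S_\alpha$ in $a$; your version is marginally cleaner but not a different argument.
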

\begin{proof}
Write, for $0 < x < 1$, $a := - \ln x > 0$ and, for $t \geq 0$,
$h_a(t) := t^\alpha x^t = t^\alpha e^{-a t}$. We thus have:
$$
I := \int_0^{+\infty} h_a(t) \, dt  = \dfrac{\Gamma(\alpha+1)}{a^{\alpha+1}} \cdot
$$
The function $h_a$ grows over $\left[0,\dfrac{\alpha}{a}\right]$ and
decreases (with limit $0^+$) over $\left[\dfrac{\alpha}{a},+\infty\right[$;
its maximum is
$h_a\left(\dfrac{\alpha}{a}\right) = \left(\dfrac{\alpha}{ae}\right)^\alpha$. \\
If $\alpha = a/m$, $m\in \N^*$, whence $\dfrac{\alpha}{a} = m$, one deduces the
bounds:
\begin{eqnarray*}
h_a(0) + \cdots + h_a(m-1) & \leq \int_0^{m} h_a(t) \, dt
& \leq h_a(1) + \cdots + h_a(m) \\
h_a(m+1) + \cdots + h_a(n) + \cdots & \leq \int_m^{+\infty} h_a(t) \, dt
& \leq h_a(m) + \cdots + h_a(n) + \cdots ,
\end{eqnarray*}
whence $\sum\limits_{n \geq 0} h_a(n) \in \left[I - h_a(m),I + h_a(m)\right]$. Therefore:
$$
S_\alpha(x) =
\dfrac{\Gamma(\alpha+1)}{(-\ln x)^{\alpha+1}} + O\left(\dfrac{1}{(- \ln x)^\alpha}\right) =
\dfrac{\Gamma(\alpha+1)}{(1-x)^{\alpha+1}} + O\left(\dfrac{1}{(1-x)^\alpha}\right)
$$
for $\ln x \sim x - 1$ when $x \to 1$. The estimation is therefore right for all
$a := \alpha/m$, but since $S_\alpha(x)$ is monotonous as a function of $a$,
it stays valid without restriction when $a \to 0^+$ and $x \to 1^-$.
\end{proof}

\paragraph{Application to a generating series.}

Let $F$ and $L$ as defined at the beginning of \ref{subsubsection:deuxpropaux} and
define the generating series:
\begin{equation}
\label{eqn:defPhi_F,L}
\Phi_{F,L}(x) := \sum_{\m \in \Z^k} x^{F(\m) + L(\m)} = \sum_{n \in _Z} r_{F,L}(n) x^n,
\end{equation}
where $r_{F,L}(n) := \card\ \{\m \in \Z^k \tq F(\m) + L(\m) = n\}$. The series
$\Phi_{F,L}(x)$ only has a finite number of terms with negative exponent and it
converges normally over every compact subset of the punctured open disk
$\Do(0,1) \setminus \{0\}$.

\begin{prop}
\label{prop:estimationPhiFL}
For $x \to 1^-$, one has the asymptotic estimation:
\begin{equation}
\label{eqn:estimationPhiFL}
\Phi_{F,L}(x) = \dfrac{1}{D_F^{1/2}} \dfrac{\pi^{\frac{k}{2}}}{(1-x)^{\frac{k}{2}}}
+ O\left(\dfrac{1}{(1-x)^{\frac{k-1}{2}}}\right).
\end{equation}
\end{prop}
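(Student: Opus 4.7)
My plan is to reduce the estimation of $\Phi_{F,L}(x)$ near $x=1^-$ to the cardinality estimate of Proposition~\ref{prop:cardinalLambdaM} by way of Abel summation, and then to conclude with Proposition~\ref{prop:sgpuissancesden}. First, since $F$ is positive definite, the function $F+L$ is bounded below on $\Z^k$, hence there exists $N_0\in\N$ with $r_{F,L}(n)=0$ for $n<-N_0$; and for $N\geq 0$ large enough the partial sum $R(N):=\sum_{n\leq N}r_{F,L}(n)$ coincides with $\card\,\Lambda_N$, so Proposition~\ref{prop:cardinalLambdaM} supplies
$$R(N)=\frac{V_k}{D_F^{1/2}}\,N^{k/2}+O\!\left(N^{(k-1)/2}\right)\quad\text{as }N\to+\infty.$$

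Next I would perform Abel summation. For $|x|<1$, since $r_{F,L}(n)=R(n)-R(n-1)$, a straightforward rearrangement gives
$$\Phi_{F,L}(x)=(1-x)\sum_{n\geq -N_0}R(n)\,x^n.$$
The finitely many terms with $n<0$ contribute a Laurent polynomial which is $O(1)$ as $x\to 1^-$, so they only add an $O(1-x)$ term after the factor is absorbed, negligible compared to the target asymptotic. The real work is therefore to estimate $\sum_{n\geq 0}R(n)\,x^n$.

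I would then plug in the asymptotic for $R(n)$ and apply Proposition~\ref{prop:sgpuissancesden} twice: with $\alpha=k/2$ on the leading term and with $\alpha=(k-1)/2$ on the error, yielding
$$\sum_{n\geq 0}R(n)\,x^n=\frac{V_k\,\Gamma(k/2+1)}{D_F^{1/2}\,(1-x)^{k/2+1}}+O\!\left(\frac{1}{(1-x)^{(k+1)/2}}\right).$$
Multiplying back by $(1-x)$ and invoking the classical identity $V_k=\pi^{k/2}/\Gamma(k/2+1)$ for the volume of the unit ball of $\R^k$ collapses the constant to $\pi^{k/2}/D_F^{1/2}$ and produces exactly the asymptotic announced in \eqref{eqn:estimationPhiFL}.

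The only point requiring genuine care—rather than a true obstacle—is the bookkeeping around the finitely many $\m\in\Z^k$ with $F(\m)+L(\m)<0$, together with the check that the error arising from substituting the $O(N^{(k-1)/2})$ remainder of $R$ into the Abel-transformed sum indeed matches $O(1/(1-x)^{(k-1)/2})$ after the factor $(1-x)$ is taken into account. Both are immediate once one notices the matching of exponents $(k+1)/2-1=(k-1)/2$, so the two auxiliary propositions really do all the work.
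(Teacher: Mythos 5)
Your proposal is correct and follows essentially the same route as the paper: both reduce $\Phi_{F,L}(x)/(1-x)$ to the generating series of the partial sums $R_{F,L}(n)=\card\,\Lambda_n$, invoke Proposition~\ref{prop:cardinalLambdaM} for the lattice-point count and Proposition~\ref{prop:sgpuissancesden} for $S_{k/2}$ and $S_{(k-1)/2}$, and finish with $V_k=\pi^{k/2}/\Gamma(k/2+1)$. Your explicit bookkeeping of the finitely many negative exponents is a slightly more careful rendering of the paper's ``$\Psi(x)+O(1)$'' step, but the argument is the same.
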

\begin{proof}
We abreviate $C := \dfrac{V_k}{D_F^{1/2}} \cdot$. Then:
$$
\dfrac{\Phi_{F,L}(x)}{1-x} =
\left(\sum_{n \in \Z} r_{F,L}(n) x^n\right)\left(\sum_{n \in \Z} x^n\right) =
\sum_{n \in \Z} R_{F,L}(n) x^n,
\text{~or~} R_{F,L}(n) := \sum_{p \leq n} r_{F,L}(p) = \card\ \Lambda_n
$$
with the same notations as before. From proposition \ref{prop:cardinalLambdaM},
there is a constant $C' > 0$ such that:
$$
\forall n > 0 \;,\; \lmod R_{F,L}(n) - C n^{\frac{k}{2}} \rmod < C' n^{\frac{k-1}{2}}.
$$
Let $\Psi(x) := \sum\limits_{n > 0} R_{F,L}(n) x^n$, so that, when $x \to 1^-$:
$$
\dfrac{\Phi_{F,L}(x)}{1-x} = \Psi(x) + O(1).
$$
We immediately get, for $0 < x < 1$:
$$
\lmod \Psi(x) - C S_{\frac{k}{2}}(x) \rmod \leq C' S_{\frac{k-1}{2}}(x).
$$
But, after proposition \ref{prop:sgpuissancesden}:
$$
S_{\frac{k}{2}}(x) = \dfrac{\Gamma(\frac{k}{2}+1)}{(1-x)^{\frac{k}{2}+1}} +
O\left(\dfrac{1}{(1-x)^{\frac{k}{2}}}\right)
\text{~and~}
S_{\frac{k-1}{2}}(x) = \dfrac{\Gamma(\frac{k-1}{2}+1)}{(1-x)^{\frac{k-1}{2}+1}} +
O\left(\dfrac{1}{(1-x)^{\frac{k-1}{2}}}\right),
$$
whence:
$$
\dfrac{\Phi_{F,L}(x)}{1-x} = C \dfrac{\Gamma(\frac{k}{2}+1)}{(1-x)^{\frac{k}{2}+1}}
+ O\left(\dfrac{1}{(1-x)^{\frac{k+1}{2}}}\right).
$$
Since we know that $V_k = \dfrac{\pi^{\frac{k}{2}}}{\Gamma(\frac{k}{2}+1)}$, the
desired estimation follows.
\end{proof}


\subsubsection{End of the proof of theorem \ref{thm:estimasymptfnk}}

\paragraph{Odd and even parts of $f_{k,n}$.}

By partitioning $\Z^k$ into $2^k$ classes modulo the subgroup $(2 \Z)^k$,
one decomposes the generating series $f_{k,n}$:
\begin{align*}
f_{k,n}(x)
&= \sum_{\m \in \Z^k} x^{Q(\m) - n S(\m)} \\
&= \sum_{\es \in \{0,1\}^k} \sum_{\m \in \Z^k} x^{Q(2 \m + \es) - n S(2 \m + \es)} \\
&= \sum_{\es \in \{0,1\}^k} x^{Q(\es) - n S(\es)}
\sum_{\m \in \Z^k} x^{4 Q(\m) + 4 B(\m,\es) - 2 n S(\m)},
\end{align*}
where $B$ denotes the symmetric bilinear form such that $B(\m,\m) = Q(\m)$.
Each sum $\sum\limits_{\m \in \Z^k} x^{4 Q(\m) + 4 B(\m,\es) - 2 n S(\m)}$ is even as
a function of $\m$, because of the form of the exponents:
$$
4 Q(\m) + 4 B(\m,\es) - 2 n S(\m) = 4 \sum_{1 \leq i \leq j \leq k} m_i m_j
+ 2 \sum_{1 \leq i \leq j \leq k} (m_i \epsilon_j + m_j \epsilon_i)
- 2 n \sum_{1 \leq i \leq k} m_i.
$$
Therefore:
\begin{align*}
f_{k,n}(-x)
&= \sum_{\es \in \{0,1\}^k} (-1)^{Q(\es) - n S(\es)} x^{Q(\es) - n S(\es)} 
\sum_{\m \in \Z^k} x^{4 Q(\m) + 4 B(\m,\es) - 2 n S(\m)} \\
&= \sum_{\es \in \{0,1\}^k} (-1)^{Q(\es) - n S(\es)} x^{Q(\es) - n S(\es)} \Phi_{Q,L_\es}(x^4),
\end{align*}
where we use again the generating series $\Phi_{F,L}$ of \eqref{eqn:defPhi_F,L},
here with the quadratic form $F := Q$ and the linear form $L := L_\es$ defined by:
$$
L_\es(\m) := B(\m,\es) - (n/2) S(\m).
$$

\begin{lem}
\label{lem:estimationPhiQLe}
When $x \to 1^-$, we have:
\begin{equation}  
\label{eqn:estimationPhiQLe}
f(-x) = \left(\sum_{\es \in \{0,1\}^k} (-1)^{Q(\es) - n S(\es)}\right) 
\dfrac{1}{2^k D_Q^{1/2}} \dfrac{\pi^{\frac{k}{2}}}{(1-x)^{\frac{k}{2}}}
+ O\left(\dfrac{1}{(1-x)^{\frac{k-1}{2}}}\right).
\end{equation}
\end{lem}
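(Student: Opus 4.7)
The plan is to combine the decomposition of $f_{k,n}(-x)$ into $2^k$ summands (already derived in the paragraph preceding the lemma) with the asymptotic estimate of Proposition \ref{prop:estimationPhiFL} applied to each summand $\Phi_{Q,L_\es}(x^4)$. Since the quadratic form $F := Q$ is positive definite (as noted at the beginning of \ref{subsection:CSAnnulation}) and $L_\es$ is an honest linear form on $\R^k$, Proposition \ref{prop:estimationPhiFL} applies directly and gives, for each fixed $\es \in \{0,1\}^k$, the estimate $\Phi_{Q,L_\es}(x^4) = \frac{\pi^{k/2}}{D_Q^{1/2}(1-x^4)^{k/2}} + O\bigl((1-x^4)^{-(k-1)/2}\bigr)$ as $x \to 1^-$.

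The bookkeeping then reduces to two observations. First, from the factorization $1 - x^4 = (1-x)(1+x+x^2+x^3)$ and the fact that $1+x+x^2+x^3 = 4 + O(1-x)$ as $x \to 1^-$, one gets
$$(1-x^4)^{-k/2} = \frac{1}{2^k (1-x)^{k/2}} + O\!\left(\frac{1}{(1-x)^{(k-2)/2}}\right),$$
and similarly $(1-x^4)^{-(k-1)/2} = O\bigl((1-x)^{-(k-1)/2}\bigr)$. Second, the prefactor $x^{Q(\es) - nS(\es)}$ is a fixed (integer) power of $x$ (since $Q(\es), S(\es), n \in \Z$), hence $x^{Q(\es) - nS(\es)} = 1 + O(1-x)$ as $x \to 1^-$; multiplying this correction by the leading $(1-x)^{-k/2}$ contributes an additional term of order $(1-x)^{-(k-2)/2}$, also absorbed into the error $O((1-x)^{-(k-1)/2})$.

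Putting these together, for each $\es \in \{0,1\}^k$ one obtains
$$x^{Q(\es) - nS(\es)} \, \Phi_{Q,L_\es}(x^4) = \frac{\pi^{k/2}}{2^k D_Q^{1/2} (1-x)^{k/2}} + O\!\left(\frac{1}{(1-x)^{(k-1)/2}}\right).$$
Multiplying by the sign $(-1)^{Q(\es) - nS(\es)}$ and summing over the finite set $\{0,1\}^k$ (so that the $2^k$ individual $O$-errors aggregate into a single one) yields the claimed estimate, the factor $\sum_\es (-1)^{Q(\es) - nS(\es)}$ factoring out of the leading term. No genuine obstacle arises: the proof is essentially a careful accounting of error terms, the only subtlety being the change of asymptotic scale from $x^4 \to 1^-$ to $x \to 1^-$.
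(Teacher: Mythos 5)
Your proof is correct and follows the same route as the paper: the paper's own proof is a one-line appeal to Proposition \ref{prop:estimationPhiFL} together with the equivalence $(1-x^4)^{-k/2} \sim 2^{-k}(1-x)^{-k/2}$, and you have simply written out the error-term bookkeeping (including the harmless prefactors $x^{Q(\es)-nS(\es)}$ and the finite summation over $\es$) that the paper leaves implicit.
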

\begin{proof}
One calls upon proposition \ref{prop:estimationPhiFL} and the fact that
$\dfrac{1}{(1 - x^4)^{\frac{k}{2}}} \sim \dfrac{1}{2^k (1 - x)^{\frac{k}{2}}}$
when $x \to 1$.
\end{proof}

We are left with the task of determining the value (and the sign !) of the factor
$\sum\limits_{\es \in \{0,1\}^k} (-1)^{Q(\es) - n S(\es)}$.

\paragraph{Calculation of the factor
  $\sum\limits_{\es \in \{0,1\}^k} (-1)^{Q(\es) - n S(\es)}$
  and conclusion.}

Let
$$
A := \card \{\es \in \{0,1\}^k \tq Q(\es) \equiv n S(\es) \pmod{2}\}
\text{~and~}
B := \card \{\es \in \{0,1\}^k \tq Q(\es) \not\equiv n S(\es) \pmod{2}\}.
$$
One thus has:
$$
A - B = \sum\limits_{\es \in \{0,1\}^k} (-1)^{Q(\es) - n S(\es)}
\quad \text{and} \quad A + B = 2^k.
$$

If $l$ among the $k$ components $\epsilon_j$ of $\es$ have value $1$ (so that
$k-l$ have value $0$), then $Q(\es) = l(l+1)/2$ and $S(\es) = l$, whence:
$$
A = \sum_{0 \leq l \leq k \atop \frac{l(l+1)}{2} - n l \text{~even}} {k \choose l}
\quad \text{~and~} \quad
B = \sum_{0 \leq l \leq k \atop \frac{l(l+1)}{2} - n l \text{~odd}} {k \choose l}
$$
We thus must discuss the parity of $l(l+1)/2 - n l$. If $n$ is even, it is the
same as that of $l(l+1)/2$, which is even if, and only if, the remainder of $l$
modulo $4$ is $0$ or $3$; if $n$ is odd, it is the same as that of $l(l-1)/2$,
which is even if, and only if the remainder of $l$ modulo $4$ is $0$ or $1$.
We are led to introduce:
$$
\forall j = 0,1,2,3 \;,\; s_j := \sum_{0 \leq l \leq k \atop l \equiv j \pmod{4}} {k \choose l}.
$$
We then have:
$$
(A,B) = \begin{cases} (s_0 + s_3,s_1 + s_2) \text{~if~} n \text{~is even}, \\
(s_0 + s_1,s_2 + s_3) \text{~if~} n \text{~is odd}. \end{cases}
$$
The integers $s_j$ can be deduced from formulas (given here for $k > 0$):
\begin{eqnarray*}
s_0 + s_1 + s_2 + s_3 &= (1+1)^k &= 2^k, \\
s_0 + \ii s_1 - s_2 - \ii s_3 &= (1 + \ii)^k &= 2^{k/2} (\cos k \pi/4 + \ii \sin k \pi/4), \\
s_0 - s_1 + s_2 - s_3 &= (1-1)^k &= 0, \\
s_0 - \ii s_1 - s_2 + \ii s_3 &= (1 - \ii)^k &= 2^{k/2} (\cos k \pi/4 - \ii \sin k \pi/4).
\end{eqnarray*}
One draws:
\begin{align*}
s_0 &= \dfrac{1}{4} \left(2^k + 2^{\frac{k}{2} + 1} \cos k \pi/4\right), \\
s_1 &= \dfrac{1}{4} \left(2^k + 2^{\frac{k}{2} + 1} \sin k \pi/4\right), \\
s_2 &= \dfrac{1}{4} \left(2^k - 2^{\frac{k}{2} + 1} \cos k \pi/4\right), \\
s_3 &= \dfrac{1}{4} \left(2^k - 2^{\frac{k}{2} + 1} \sin k \pi/4\right),
\end{align*}
whence, if $n$ is even:
$$
A - B = s_0 - s_1 - s_2 + s_3 =
\dfrac{1}{4} 2^{\frac{k}{2} + 1} (2 \cos k \pi/4 - 2 \sin k \pi/4) =
2^{\frac{k+1}{2}} \cos (k+1) \pi/4,
$$
and, if $n$ is odd:
$$
A - B = s_0 + s_1 - s_2 - s_3 =
\dfrac{1}{4} 2^{\frac{k}{2} + 1} (2 \cos k \pi/4 + 2 \sin k \pi/4) =
2^{\frac{k+1}{2}} \cos (k-1) \pi/4.
$$
Combining these values with lemma \ref{lem:estimationPhiQLe}, one finds:
$$
f_{k,n}(-x) = \dfrac{\pi^{\frac{k}{2}}}{2^{\frac{k-1}{2}} D_Q^{\frac{1}{2}}} \,
\dfrac{\cos((k + \epsilon)\pi/4)}{(1-x)^{\frac{k}{2}}}
+ O\left(\dfrac{1}{(1-x)^{\frac{k-1}{2}}}\right).
$$
This formula can be made more precise by noting that $D_Q = (k+1) 2^{-k}$.
(Quick argument: $a \in \C$ being fixed, let $D_n(x)$ the determinant of the
$n \times n$ matrix having $x$ on the diagonal and $a$ elsewhere. One checks
that $D_n'(x) = n D_{n-1}(x)$ and $D_n(a) = 0$, from which one deduces by
induction that $D_n(x) = (x-a)^{n-1} (x + (n-1)a)$. One then sets $x := 1$ and
$a := 1/2$.) Replacing $D_Q$ by $(k+1) 2^{-k}$ in the above formula:
$$
f_{k,n}(-x) = \dfrac{\pi^{\frac{k}{2}} \sqrt{2}}{\sqrt{k+1}} \,
\dfrac{\cos((k + \epsilon)\pi/4)}{(1-x)^{\frac{k}{2}}}
+ O\left(\dfrac{1}{(1-x)^{\frac{k-1}{2}}}\right).
$$
This is indeed formula \eqref{eqn:estimasymptfnk} of theorem
\ref{thm:estimasymptfnk}.


\subsubsection{Complement: a first attempt at exploiting modularity}
\label{subsubsection:tentative}
The generating series of the numbers of representations by positive definite
quadratic form are known to have modular properties, it has been, since Jacobi,
one of the clues of the appearance of theta functions in number theory. We shall
here try (and only partially succeed) to exploit this fact to improve the above
result.

\paragraph{Use of Poisson's summation formula.}

We use Poisson's formula in the form quoted in \cite[\S VII.6]{SerreCoursEN}.
Let $f$ a function over $\R^k$, of Schwartz class, meaning that $f$ is
$\mathcal{C}^\infty$ and all its partial derivatives $Df$ at all orders have
fast decay, \ie\ they are such that
$\forall N \;,\; \lmod Df(\x) \rmod = O\left(\lmod \x \rmod^{-N}\right)$
when $\lmod \x \rmod \to + \infty$. We write $\mu$ the Lebesgue measure
and $\x.\y$ the canonical inner product on $\R^k$. The Fourier transform
$\hat{f}$, defined as:
$$
\hat{f}(\y) := \int_{\R^k} e^{- 2 \ii \pi \x.\y} \, f(\x) \, d\mu(\x)
$$
also is of Schwartz class; and \emph{Poisson's summation formula} can be
stated as follows:
$$
\sum_{\m \in \Z^k} \hat{f}(\m) = \sum_{\m \in \Z^k} f(\m).
$$

Now let $F(\x) := \sum a_{i,j} x_i x_j$ a positive definite quadratic form and
$L(\x) := \sum b_i x_i$ a linear form over $\R^k$. The function:
$$
\phi_{F,L}(\x) := e^{-(F(\x) + L(\x))}
$$
is of Schwartz class. We shall compute its Fourier transform. We write
$X,Y,U,V,B,\ldots$ the column vectors associated with $\x,\y,\u,\v,\b\ldots$
and $A := (a_{i,j})$ the (real symmetric) matrix of $F$, so that
$F(\x) = \tr X A X$. Likewise $L(\x) = \tr B X$, where $B$ is the column vector
associated with $\b := (b_1,\ldots,b_k)$. Let $S$ a real square invertible matrix
such that $A = \tr S S$ (such $S$ is known to exist) and let $T := S^{-1}$.
Then:
\begin{align*}
\widehat{\phi_{F,L}}(\y)
&= \int_{\R^k} e^{- 2 \ii \pi \x.\y - (F(\x) + L(\x))} \, d\mu(\x) =
\int_{\R^k} e^{- \tr X A X - \tr B X - 2 \ii \pi \tr Y X} \, d\mu(\x) \\
&= \dfrac{1}{\lmod \det S \rmod}
\int_{\R^k} e^{- \tr U U - \tr B T U - 2 \ii \pi \tr Y T U} \, d\mu(\u)
\quad \text{by change of variable~} X = S U \\
&= \dfrac{1}{\lmod \det S \rmod}
\int_{\R^k} e^{- \u^2 - \v \u} \, d\mu(\u)
\quad \text{where one put~} \tr V := \tr B T + 2 \ii \pi \tr Y T \\
&= \dfrac{e^{\v^2/4}}{\lmod \det S \rmod} \int_{\R^k} e^{- (\u + \v/2)^2} \, d\mu(\u) =
\dfrac{e^{\v^2/4}}{\lmod \det S \rmod} \int_{\R^k} e^{-\u^2} \, d\mu(\u) =
\dfrac{\pi^{k/2} e^{\v^2/4}}{\lmod \det S \rmod} \cdot
\end{align*}
(Because $\int_{\R^k} e^{-\u^2} \, d\mu(\u)$ is the gaussian integral over $\R^k$.)
At last, we compute:
$$
\v^2 = (\tr B T + 2 \ii \pi \tr Y T)(\tr T B + 2 \ii \pi \tr T Y) =
(\tr B + 2 \ii \pi \tr Y) A^{-1} (B + 2 \ii \pi Y) = \tilde{F}(\b + 2 \ii \pi \y),
$$
where was introduced the quadratic form $\tilde{F}$ with matrix $A^{-1}$ (this form
also is positive definite). One has:
$$
\tilde{F}(\b + 2 \ii \pi \y) = \tilde{F}(\b)
- 4 \pi^2 \left(\tilde{F}(\y) + \tilde{L}(\y)\right), \text{~where~}
\tilde{L}(\y) := \dfrac{1}{\ii \pi} \tr B A^{-1} Y.
$$
And we eventually reaches the result:
$$
\widehat{\phi_{F,L}}(\y) =
\dfrac{\pi^{k/2} e^{\tilde{F}(\b/2)}}{\sqrt{\det A}} e^{-\pi^2(\tilde{F}(\y) + \tilde{L}(\y))}.
$$
Poisson's summation formula then yields:
\begin{equation}
\label{eqn:FSP}
\sum_{\m \in \Z^k} e^{-(F(\m) + L(\m))} = \dfrac{\pi^{k/2} e^{\tilde{F}(\b/2)}}{\sqrt{\det A}} 
\sum_{\m \in \Z^k} e^{-\pi^2(\tilde{F}(\m) + \tilde{L}(\m))}.
\end{equation}

\paragraph{Application to a generating series.}

Recall that $f_{k,n}(x) = \sum\limits_{\m \in \Z^k} x^{Q(\m) - n S(\m)}$ and that we are
looking for conditions ensuring the vanishing of this function. We have:
$$
f_{k,n}(x) = \sum_{\m \in \Z^k} \phi_{F,L}(\m), \text{~where~}
F := t Q, L := - t n S, \text{~with~} x = e^{-t}. 
$$
Note that $\lmod x \rmod < 1 \Leftrightarrow \Re(t) > 0$.
In order to apply formula \ref{eqn:FSP}, we thus take:
$$
A := t \begin{pmatrix} 1 & 1/2 & \ldots & 1/2 \\ 1/2 & 1 & \ldots & 1/2 \\
\vdots & \vdots & \ddots & \vdots \\ 1/2 & 1/2 & \ldots & 1 \end{pmatrix}
\quad \text{and} \quad
B := - t n \begin{pmatrix} 1 \\ 1 \\ \vdots \\ 1 \end{pmatrix}.
$$
One easily checks that:
$$
A^{-1} = \dfrac{2}{(k+1)t}
\begin{pmatrix} k & -1/2 & \ldots & -1 \\ -1 & k & \ldots & -1 \\
\vdots & \vdots & \ddots & \vdots \\ -1 & -1 & \ldots & k \end{pmatrix},
$$
whence:
$$
\tilde{F}(\y) = \dfrac{2}{(k+1)t} \tilde{Q}(\y), \text{~where~}
\tilde{Q}(\y) := k \sum_{1 \leq i \leq k} y_i^2 - 2 \sum_{1 \leq i < j \leq k} y_i y_j,
\quad \text{and~} \tilde{L}(\y) = \dfrac{-2 n}{(k+1) \ii \pi} S(\y).
$$
Setting $y := e^{-\frac{2 \pi^2}{(k+1)t}}$, we then see that $f_{k,n}(x)$ equals,
up to a non zero constant factor:
$$
g(y) := \sum_{\m \in \Z^k} e^{r \ii \pi S(\m)} \, y^{\tilde{Q}(\m)},
$$
where one put $r := - \dfrac{2 n}{k+1} \in \Q$. The function $g(y)$ is well defined
for $\lmod y \rmod < 1$, which is consistent with the above definition of $y$ and the
fact that $\lmod x \rmod < 1 \Leftrightarrow \Re(t) > 0 \Leftrightarrow \Re(1/t) > 0$.
Note that, since $\tilde{Q}$ is positive definite, $g(0) = 1$. But the presence of a
\og phase \fg\ $r \ii S(\m)$ on the one hand, the arithmetical properties of $\tilde{Q}$
on the other hand, complicate slightly the use of real analysis here. \\

We follow, as far as possible, the same method as before:
\begin{align*}
g(y)
&= \sum_{\es \in \{0,1\}^k} \sum_{\m \in \Z^k} e^{r \ii \pi S(2 \m + \es)} \, y^{\tilde{Q}(2 \m + \es)} \\
&= \sum_{\es \in \{0,1\}^k} e^{r \ii \pi S(\es)} \, y^{\tilde{Q}(\es)}
\sum_{\m \in \Z^k} e^{2 r \ii \pi S(\m)} \, (y^4)^{\tilde{Q}(\m) + \tilde{B}(\m,\es)},
\end{align*}
where $\tilde{B}(\u,\v) := k \sum u_i v_i - 2 \sum\limits_{i<j} (u_i v_j + u_j v_i)$
denotes the symmetric bilinear form such that $\tilde{B}(\m,\m) = \tilde{Q}(\m)$.
One also finds out that, if $\es$ has $l$ coefficients with value $1$, then
$S(\es) = l$ and $\tilde{Q}(\es) = kl - l(l-1)$.

\paragraph{If $r$ is an integer.}

In that case, $e^{2 r \ii \pi S(\m)} = 1$, which somehow gets rid of phase problems.
One want to make $y^4 \to 1^-$ but in such a way that $g(y)$ stays real. We will
take $y = \ii^p z$, $0 \leq z < 1$, $p \in \{0,1,2,3\}$ being chosen such that
$pk$ be even: $pk = 2 s$. Then:
$$
y^{\tilde{Q}(\es)} = \ii^{p kl - pl(l-1)}  z^{\tilde{Q}(\es)} =
(-1)^{sl - pl(l-1)/2}  z^{\tilde{Q}(\es)},
$$
and all terms are indeed real. Moreover, we have the following asymptotic estimate
when $z \to 1^-$:
$$
g(y) =
\left(\sum_{l=0}^k {k \choose l} (-1)^{r l + sl - pl(l-1)/2}\right) \dfrac{C}{(1-z)^{k/2}}
+ O\left(\dfrac{1}{(1-z)^{(k-1)/2}}\right),
$$
where $C > 0$ is computed as before using the discriminant of $\tilde{Q}$. \\

If the left factor of the main term is strictly negative, the limit of $g(y)$ is
$-\infty$ and one deduces again the existence of a zero of $g(y)$. To compute said
factor, one notes that $l(l-1)/2$ is even if, and only if $l \equiv 0$ or
$l \equiv 1 \pmod{4}$. One then has, with the same notations as before:
\begin{align*}
\sum_{l=0}^k {k \choose l} (-1)^{r l + sl - pl(l-1)/2}
&= s_0 + (-1)^{s+r} s_1 + (-1)^p (s_2 + (-1)^{s+r} s_3) \\
&= (s_0 + (-1)^p s_2) + (-1)^{s+r} (s_1 + (-1)^p s_3).
\end{align*}
If $p$ is even, since $s_0 + s_2 = s_1 + s_3 = 2^{k-1}$, one finds $0$ or $2^k$ and
therefore certainly not a strictly negative number. So we suppose that $p$ is odd,
\ie\ $p = 1$ or $p = 3$. Either case amounts to the same, so we take $p = 1$ and
therefore $k = pk = 2s$. Since $r = -2 n/(k+1)$ has been assumed integral and since
the denominator is odd, we see that $n$ is a multiple of $k+1$ and that $r$ is even. \\

Let us resume the calculation:
\begin{align*}
\sum_{l=0}^k {k \choose l} (-1)^{r l + sl - pl(l-1)/2}
&= (s_0 - s_2) + (-1)^{s+r} (s_1 - s_3) \\
&= 2^{k/2} (\cos k \pi/4 + (-1)^{s+r} \sin k \pi/4) \\
&= 2^s (\cos s \pi/2 + (-1)^{s+r} \sin s \pi/2).
\end{align*}
This number is strictly negative if, and only if,
$s \equiv 1 \text{~or~} 2 \pmod{4}$. So we can conclude this study:

\begin{prop}
\label{prop:CScomplementaire}
If $k \equiv 2 \text{~or~} 4 \pmod{8}$ and if $k+1 | n$, then $\gamma_{k,n}(q)$ vanishes
for at least one value of $q$.
\end{prop}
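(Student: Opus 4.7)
The plan is to continue the Poisson/modular analysis developed above: having reduced, via the change of variables $x = e^{-t}$ and $y = e^{-2\pi^2/((k+1)t)}$, the study of $f_{k,n}(x)$ to that of the transformed function $g(y) = \sum_{\m \in \Z^k} e^{r\ii\pi S(\m)}\,y^{\tilde Q(\m)}$ with $r = -2n/(k+1)$, I want to produce a radial path $y = \ii^p z$, $z \in [0,1)$, along which $g$ is real-valued, $g(0) = 1$, and $g(\ii^p z) \to -\infty$ as $z \to 1^-$. The intermediate value theorem will then give a zero of $g$, hence, through the non-vanishing change-of-variable factor relating $g$ to $f_{k,n}$ and lemma \ref{lem:tnk(q)=fnk(x)}, the desired zero of $\gamma_{k,n}$ inside $\Do(0,1)\setminus\{0\}$. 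The arithmetic hypothesis $(k+1)\mid n$ enters immediately: since $k$ is even (both $k\equiv 2$ and $k\equiv 4\pmod 8$ force this), $k+1$ is odd, so $r$ is a well-defined \emph{even} integer, and the phase factor $e^{2r\ii\pi S(\m)}$ disappears from the inner sums after coset decomposition.

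Next, I would pick $p\in\{0,1,2,3\}$ with $pk=2s$ even, so that every factor $y^{\tilde Q(\es)} = (-1)^{sl-pl(l-1)/2}\,z^{\tilde Q(\es)}$ becomes real (here $l$ counts the coordinates of $\es\in\{0,1\}^k$ equal to $1$). Splitting $\Z^k = \bigsqcup_\es (\es + (2\Z)^k)$ exactly as in the proof of theorem \ref{thm:estimasymptfnk} and applying proposition \ref{prop:estimationPhiFL} to each inner generating series $\Phi_{\tilde Q,L'}(z^4)$ (using $1-z^4\sim 4(1-z)$ as $z\to 1^-$), one obtains the asymptotic
$$g(\ii^p z) = \left(\sum_{l=0}^k \binom{k}{l}(-1)^{rl + sl - pl(l-1)/2}\right) \frac{C}{(1-z)^{k/2}} + O\left(\frac{1}{(1-z)^{(k-1)/2}}\right),$$
with $C>0$ computed from the discriminant of $\tilde Q$. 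For $p$ even one sees via $s_0+s_2 = s_1+s_3 = 2^{k-1}$ that the leading coefficient lies in $\{0,2^k\}$ and is thus useless; so $p$ must be odd, and taking $p=1$ forces $k=2s$. Since $r$ is even, the coefficient collapses, through the explicit values of $s_0,s_1,s_2,s_3$ derived above, to
$$(s_0 - s_2) + (-1)^s(s_1 - s_3) = 2^s\bigl(\cos(s\pi/2) + (-1)^s\sin(s\pi/2)\bigr).$$

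A direct check on $s\pmod 4\in\{0,1,2,3\}$ shows this number is strictly negative precisely when $s\equiv 1$ or $s\equiv 2\pmod 4$, equivalently $k\equiv 2$ or $k\equiv 4\pmod 8$. Under these hypotheses $g(\ii z)\to -\infty$ as $z\to 1^-$ while $g(0)=1$, so continuity delivers a zero $g(\ii z_0)=0$ for some $z_0\in(0,1)$, proving the proposition. The main obstacle is the second step: one must legitimately extend the real-variable asymptotic of proposition \ref{prop:estimationPhiFL} to the complex twisted path, checking that the $O((1-z)^{-(k-1)/2})$ error term remains uniform across the $2^k$ cosets of $(2\Z)^k$ and is not swallowed by cancellations in the leading coefficient; once that adaptation is granted, the sign analysis producing the stated congruence conditions on $k$ is purely combinatorial.
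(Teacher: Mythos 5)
Your proposal follows essentially the same route as the paper's own proof: the Poisson-summation change of variables to $g(y)$, the choice $y=\ii^p z$ with $pk$ even forcing $p$ odd and $k=2s$, the coset decomposition modulo $(2\Z)^k$, and the reduction of the leading coefficient to $2^s\left(\cos(s\pi/2)+(-1)^s\sin(s\pi/2)\right)$ with the same sign analysis yielding $k\equiv 2$ or $4\pmod 8$. The analytic caveat you flag about transporting the asymptotic of proposition \ref{prop:estimationPhiFL} to the twisted path is likewise left implicit in the paper, so the two arguments match in both substance and level of detail.
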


It is not easy here to characterize that value: one would require that the argument
of $y := e^{-\frac{2 \pi^2}{(k+1)t}}$ be $\pi/2$, whence a condition on $t$ which we shall
not try to write down. 

\paragraph{If $r' := r-1/2$ is an integer.}

In that case, one checks that $k$ is necessarily odd: $k = 2 s - 1$. An analysis
similar to the previous one leads us to take $y = \ii z$, whence again $y^4 = z^4$.
With the same notations, one then gets:
$$
e^{r \ii \pi S(\es)} \, y^{\tilde{Q}(\es)} = (-1)^{r' + sl - l(l-1)/2} \, z^{l(2s - l)}
$$
and
$$
\sum_{\m \in \Z^k} e^{2 r \ii \pi S(\m)} \, (y^4)^{\tilde{Q}(\m) + \tilde{B}(\m,\es)} =
\sum_{\m \in \Z^k} (-1)^{S(\m)} \, (z^4)^{\tilde{Q}(\m) + \tilde{B}(\m,\es)}.
$$
It is the latter sum that causes problems: one can indeed check that the terms
with $S(\m)$ even, resp. odd, mutually compensate in such a way that the main
term of the asymptotic estimation disappears. Our method does not allow to
conclude in this case.


\subsection{Another approach to asymptotics when $q\to -1$}
\label{section:-1}

Here again and until the end of the paper, we shall use the function $\vartheta$
and the coefficients $c_{k,n}(q)$ introduced in \ref{subsection:Puissance4}.
We begin with a kind of ``distribution formula''. \\

Recall that $c_{1,0}(q)=1$. From now on, we shall assume that $k \ge 2$; also,
from equation \eqref{equation:k}:
$$
\vartheta^k(x)=\sum_{n=0}^{k-1}c_{k,n}(q)\,x^n\,\vartheta_{q^k}(x^kq^n)\,.
$$
Last, recall the notation $\sum\limits_{j \!\! \pmod{k}} f(j)$ which was introduced
at the very end of section \ref{section:introduction}.

\begin{prop}
\label{prop:ckl}
If $\mu$ is a primitive $k^\text{th}$ root of unity, one has: 
\begin{equation}
\label{equation:ck1}
c_{k,n}(q) =
\dfrac{q^{n^2/k}\,x^{-n}}{k\,\vartheta_{q^k}(x^k)}\,
\sum_{j \!\!\!\!\!\!\!\! \pmod{k}} \mu^{-n j}\,\vartheta^k(xq^{-n/k}\mu^j)\,.
\end{equation}
In particular:
\begin{equation}
\label{equation:ck2}
c_{k,n}(q) =
\dfrac{q^{n^2/k}}{k\,\vartheta_{q^k}(1)}\,
\sum_{j \!\!\!\!\!\!\!\! \pmod{k}} e^{2n j\pi i/k}\,\vartheta^k(q^{n/k}e^{2j\pi i/k})\,.
\end{equation}
\end{prop}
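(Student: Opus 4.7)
The plan is to obtain formula \eqref{equation:ck1} by a standard ``finite Fourier inversion'' on the cyclic group $\Z/k\Z$, applied to the identity \eqref{equation:k} of proposition \ref{prop2}. The starting point is
$$
\vartheta^k(y) = \sum_{m=0}^{k-1} c_{k,m}(q)\, y^m\, \vartheta_{q^k}(y^k q^m),
$$
into which I substitute $y := x q^{-n/k} \mu^j$. The key algebraic observation is that $\mu^k = 1$, so $y^k = x^k q^{-n} \mu^{jk} = x^k q^{-n}$, independently of $j$; hence the theta factor becomes $\vartheta_{q^k}(x^k q^{m-n})$ which depends on $j$ only through $\mu^{jm}$ appearing in $y^m = (x q^{-n/k})^m \mu^{jm}$.

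The second step is to multiply by $\mu^{-nj}$ and sum over $j$ modulo $k$. The orthogonality relation $\sum_{j \!\! \pmod k} \mu^{j(m-n)} = k$ if $k\mid m-n$ and $0$ otherwise collapses the sum over $m \in \{0,\ldots,k-1\}$ to the single term $m = n$ (I assume $n \in \{0,\ldots,k-1\}$; the general case follows from the $k$-periodicity of $c_{k,n}$ noted in \ref{subsubsection:linearrelations}). One is left with
$$
\sum_{j \!\!\!\! \pmod k} \mu^{-nj}\,\vartheta^k(x q^{-n/k} \mu^j) = k\, c_{k,n}(q)\, x^n q^{-n^2/k}\, \vartheta_{q^k}(x^k),
$$
and solving for $c_{k,n}(q)$ yields formula \eqref{equation:ck1}.

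To deduce the specialized formula \eqref{equation:ck2}, I set $x := 1$ in \eqref{equation:ck1} and $\mu := e^{2\pi i/k}$; this gives
$$
c_{k,n}(q) = \dfrac{q^{n^2/k}}{k\, \vartheta_{q^k}(1)} \sum_{j \!\!\!\! \pmod k} e^{-2nj\pi i/k}\, \vartheta^k(q^{-n/k}\, e^{2j\pi i/k}).
$$
Then I apply the functional equation $\vartheta(1/y) = \vartheta(y)$ from remark \ref{rq1}\eqref{thetaequation}, raised to the $k$-th power, to rewrite $\vartheta^k(q^{-n/k} e^{2j\pi i/k}) = \vartheta^k(q^{n/k} e^{-2j\pi i/k})$. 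Finally the change of summation variable $j \mapsto -j$ (which permutes the complete set of residues modulo $k$) converts this into the stated form.

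The computation is essentially mechanical; the only thing to watch is that the substitution $y = x q^{-n/k}\mu^j$ produces an exponent $x^k q^{-n}$ that is genuinely independent of $j$ (which is what allows the $\vartheta_{q^k}$ factor to be pulled outside the sum), and that after the orthogonality projection the residual power of $q$ combines with $y^m = y^n$ to yield the factor $q^{-n^2/k} x^n$ displayed in the proposition. There is no substantive difficulty.
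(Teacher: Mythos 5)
Your proof is correct and is essentially the paper's own argument: finite Fourier inversion of identity \eqref{equation:k} over $\Z/k\Z$, using orthogonality of the characters $\mu^{mj}$ to isolate the term $m=n$, followed by the substitution $x\mapsto xq^{-n/k}$ (you merely perform that substitution at the outset rather than at the end, and choose $\mu=e^{2\pi i/k}$ instead of the paper's $e^{-2\pi i/k}$, compensating with $\vartheta(1/y)=\vartheta(y)$ and the relabeling $j\mapsto-j$). One pedantic caveat: the coefficients $c_{k,n}$ are not $k$-periodic in $n$ (one has $c_{k,n+k}=q^{n+k/2}\,c_{k,n}$, only the \emph{vanishing} is $k$-periodic), but this is immaterial since the identity is established, and used, for $0\le n\le k-1$.
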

\begin{proof}
For $r \in \{0,\ldots,k-1\}$ and $j \in \Z$, divide by $x^r$ both members of
\eqref{equation:k} and then replace $x$ by $x\mu^j$; it follows that:
$$
x^{-r}\,\mu^{-rj}\,\vartheta^k(x\mu^j) =
\sum_{n=0}^{k-1}c_{k,n}(q)\,x^{n-r}\,\mu^{(n-r)j}\,\vartheta_{q^k}(x^kq^n)\,.
$$
With $r=n$ and index $j$ running along a complete system of integers modulo $k$,
we find that coefficients $c_{k,n}(q)$ satisfy the following identities: 
$$
x^{-n}\,\sum_{j \!\!\!\!\!\!\!\! \pmod{k}} \mu^{-n j}\,\vartheta^k(x\mu^j) =
k\,c_{k,n}(q)\,\vartheta_{q^k}(x^kq^n)\,.
$$
Substituting $xq^{-n/k}$ for $x$ above, we deduce \eqref{equation:ck1}. \\
Putting $x=1$ and $\mu=e^{-2\pi i/k}$ in \eqref{equation:ck1}, we immediately obtain
\eqref{equation:ck2}.
\end{proof}

Note that, since
$\displaystyle\vartheta(q^{-n/k}\mu^j)=\vartheta(q^{n/k}\mu^{-j})=
q^{1/2-n/k}\,\mu^{j}\vartheta(q^{(n-k)/k}\mu^{-j})$,
one can put \eqref{equation:ck2} under the following form:
$$
c_{k,n}(q)=
\frac{q^{\left((k-n)^2+n^2\right)/(2k)}}{k\,\vartheta_{q^k}(1)}\,\sum_{j=0}^{k-1}\mu^{(k-n) j}\,\vartheta^k(q^{(n-k)/k}\mu^{-j})\,.
$$
Comparing it with formula \eqref{equation:ck2} where one has replaced $n$ by
$k-n$, one recovers the symmetry property of $c_{k,n}$ already encountered.

  
\subsubsection{Expression as a finite sum of theta quotients}

We shall now consider, when $q \to -1$, the asymptotic behaviour of $c_{k,n}(q)$.
We shall for that use the modular transformation of $\vartheta$. So we borrow from
\cite[p. 166, (76.1)]{RademacherTopics} the notation $\vartheta_3(v\vert \tau)$:
$$
\vartheta_3(v\vert\tau) := \displaystyle\sum_{n\in\Z} e^{\pi \ii (n^2\tau+2nv)},
$$
whence
$$
\vartheta_q(x) = \vartheta_3(v\vert\tau) \text{~for~}
x=e^{2\pi \ii v} \text{~and~} q=e^{2\pi \ii \tau}.
$$
Let $M :=\displaystyle \begin{pmatrix} -1 & 0 \cr 2 & -1 \end{pmatrix}$,
$\tau' := M \tau = \displaystyle\frac{-\tau}{2\tau-1}$ and $q' := e^{2\pi \ii \tau'}$.
If $a=-1$, $b=0$, $c=2$ and $d=-1$, note that
$\vartheta_{1-a-c,1-b-d} = \vartheta_{0,2} = \vartheta_3$, where $\vartheta_{\mu,\nu}$
is defined in \cite[p. 181, (81.2)]{RademacherTopics}. From that, the modular formula
given for $\vartheta_3$ in \cite[p. 182, Theorem]{RademacherTopics} implies that:
$$
\vartheta_q(x) =
\dfrac{\ii}{\varepsilon_2}\,e^{-2\pi \ii vv'}\,\sqrt{\dfrac{\ii}{2\tau-1}}\,\vartheta_{q'}(x')\,,
$$
where
$\varepsilon_2 := \displaystyle e^{\pi \ii/4}\,\left(\frac{2}{-1}\right)$,
$x := e^{2\pi \ii v}$, $v'=\displaystyle\frac v{2\tau-1}$ and $x'=e^{2\pi \ii v'}$.
Since $\displaystyle\left(\frac{2}{-1}\right) = \left(\frac{2}{1}\right)=1$, one deduces that:
\begin{equation}
\label{equation:vartheta_3}
\vartheta_q(x)=e^{2\pi \ii (1/8-vv')}\,\sqrt{\frac{\ii}{2\tau-1}}\,\vartheta_{q'}(x')\,.
\end{equation}

Recall that $\gamma_{k,n}(q) = c_{k,n}(q)\,q^{-n/2}$.

\begin{prop}
\label{prop:tkl-1}
The following formula holds:
\begin{equation}
\label{equation:ckvartheta}
\gamma_{k,n}(q) =
\dfrac{e^{\pi \ii (k-2n)/4}\,(qe^{-\pi \ii})^{n(n-k)/(2k)}}
      {k\,\vartheta_{q^k}(1)}\,\left(\frac{\ii}{2\tau-1}\right)^{k/2}\,S_{k,n}(q)\,,
\end{equation}
where
\begin{equation}
\label{equation:ckvarthetaS}
S_{k,n}(q) := \sum_{j \!\!\!\!\!\! \pmod{k}}
\left(q'e^{\pi \ii}\right)^{(2j+n)^2/(2k)}\,\vartheta_{q'}^k\left(q'^{(2j+n)/k}e^{2\pi \ii j/k}\right)\,.
\end{equation}
\end{prop}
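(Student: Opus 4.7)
The plan is to apply the modular transformation formula \eqref{equation:vartheta_3} termwise to the sum expression for $c_{k,n}(q)$ given in formula \eqref{equation:ck2}, and then convert from $c_{k,n}$ to $\gamma_{k,n}$ via \eqref{eqn:lienentrediverscoeffs}. Since \eqref{equation:ck2} is already a finite sum over $j\!\pmod{k}$ of $\vartheta^k$ evaluated at arguments on the unit circle times $q^{n/k}$, every step should be a mechanical substitution plus phase bookkeeping.

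Concretely, for each $j\!\pmod k$ I would set $x_j := q^{n/k}e^{2\pi \ii j/k} = e^{2\pi \ii v_j}$ with $v_j = (n\tau+j)/k$, so that $v_j' := v_j/(2\tau-1)$ and $x_j' := e^{2\pi \ii v_j'}$ are the transforms under $M$. Raising \eqref{equation:vartheta_3} to the $k^\text{th}$ power gives
\[
\vartheta^k(x_j) = e^{\pi \ii k/4}\,e^{-2\pi \ii k v_j v_j'}\,\left(\frac{\ii}{2\tau-1}\right)^{k/2}\vartheta_{q'}^k(x_j').
\]
The two computations to do are: (i) simplify $x_j'$ using the identity $2\tau'+1=-1/(2\tau-1)$ (immediate from $\tau' = -\tau/(2\tau-1)$), which yields $x_j' = q'^{-(n+2j)/k} e^{-2\pi \ii j/k}$; combined with $\vartheta_{q'}(y) = \vartheta_{q'}(1/y)$ of remark \ref{rq1}~\eqref{thetaequation}, this rewrites $\vartheta_{q'}^k(x_j')$ as $\vartheta_{q'}^k\bigl(q'^{(2j+n)/k}e^{2\pi \ii j/k}\bigr)$, matching the shape in $S_{k,n}(q)$. (ii) Compute the phase $-2\pi \ii k v_j v_j' = -2\pi \ii (n\tau+j)^2/(k(2\tau-1))$; using $\tau = \tau'/(2\tau'+1)$ one writes $n\tau+j = ((n+2j)\tau'+j)/(2\tau'+1)$, and polynomial division of $((n+2j)\tau'+j)^2$ by $(2\tau'+1)$ (with the residual fraction converted back via $1/(2\tau'+1) = 1-2\tau$) splits the phase into a factor $q'^{(n+2j)^2/(2k)}$, a root-of-unity factor $e^{2\pi \ii j^2/k}$, and a compensating factor $q^{-n^2/(2k)}$.

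It then remains only to collect these, absorb the original $q^{n^2/k}$ from \eqref{equation:ck2} and the $e^{2\pi \ii nj/k}$ there, recognize the combination $q'^{(n+2j)^2/(2k)} e^{2\pi \ii(j^2+nj)/k}$ as $(q'e^{\pi \ii})^{(2j+n)^2/(2k)} \cdot e^{-\pi \ii n^2/(2k)}$, and finally multiply by $q^{-n/2}$ to pass to $\gamma_{k,n}$; the elementary identity $(qe^{-\pi \ii})^{n^2/(2k)}\cdot q^{-n/2} = e^{-\pi \ii n/2}(qe^{-\pi \ii})^{n(n-k)/(2k)}$ converts the leading $e^{\pi \ii k/4}$ into the announced $e^{\pi \ii(k-2n)/4}$. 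The sole obstacle is bookkeeping of the fractional powers of $q$ and $q'$: several of them are non-integral, so one must maintain throughout the convention fixed in \ref{subsubsection:conventionsPuissance4} (namely $q^{\alpha} := e^{2\pi \ii\tau\alpha}$ with $\Im\tau>0$) to ensure that each rewriting step preserves the chosen branch, and the polynomial division in $\tau'$ leaves a remainder that must correctly be re-expressed in $\tau$ before being identified with a power of $q$.
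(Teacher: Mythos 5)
Your proposal is correct and follows essentially the same route as the paper's proof: both apply the modular formula \eqref{equation:vartheta_3} termwise to \eqref{equation:ck2} with $v_j=(n\tau+j)/k$, use $\vartheta_{q'}(x')=\vartheta_{q'}(1/x')$ to put the argument in the announced shape, split the phase $e^{-2\pi\ii k v_jv_j'}$ into $q'^{(2j+n)^2/(2k)}\,e^{2\pi\ii j^2/k}\,q^{-n^2/(2k)}$, and pass from $c_{k,n}$ to $\gamma_{k,n}$ at the end. The only cosmetic difference is that the paper obtains $v_jv_j'$ via the symmetric decomposition $v=\frac{n}{2k}(2\tau-1)+\frac{2j+n}{2k}$ rather than your polynomial division in $\tau'$; the resulting phase bookkeeping is identical.
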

\begin{proof}
Note that $\tau = \displaystyle\frac{\tau'}{2\tau'+1}$ and $(2\tau-1)(2\tau'+1)=-1$.
For $\displaystyle x := q^{n/k} e^{2j\pi \ii/k}$, write $v :=\displaystyle\frac{n\tau+j}k$
so that:
$$
v = \dfrac{n}{2k}(2\tau-1)+\frac{2j+n}{2k}\,,\quad 
v' = \dfrac{n}{2k}+\frac{2j+n}{2k(2\tau-1)} = -\dfrac{j}{k}-\frac{2j+n}{k}\tau'
$$
and
$$
v\,v' =
\dfrac{n^2}{4k^2}(2\tau-1)+\frac{n(2j+n)}{2k^2} +
\dfrac{(2j+n)^2}{4k^2(2\tau-1)} =
\dfrac{n^2\,\tau}{2k^2} - \dfrac{j^2}{k^2} - \dfrac{(2j+n)^2\tau'}{2k^2}\,\cdot
$$
Applying \eqref{equation:vartheta_3} to $\displaystyle x = q^{n/k}e^{2j\pi i/k}$ and
considering relation $\vartheta_{q'}(x') = \vartheta_{q'}(1/x')$, one gets:
\begin{equation}
\label{equation:vartheta_31}
\vartheta_q^k(q^{n/k}e^{2j\pi \ii/k}) =
C_j\,\left(\frac{i}{2\tau-1}\right)^{k/2}\,q'^{(2j+n)^2/(2k)}\,\vartheta_{q'}^k
\left(q'^{(2j+n)/k}e^{2\pi ij/k}\right)\,,
\end{equation}
where $C_j := e^{2\pi \ii\left(k/8-(n^2\tau-2j^2)/(2k)\right)}$. Since
$$
e^{2n j\pi \ii/k}\,C_j = e^{2\pi \ii\left(k/8-(n^2-(2j+n)^2)/(4k)\right)}q^{-n^2/(2k)}\,,
$$
the relations \eqref{equation:ck2} and \eqref{equation:vartheta_31} imply:
\begin{equation*}
\begin{split}
c_{k,n}(q) =&
\dfrac{e^{\pi ik/4}\,(qe^{-\pi \ii})^{n^2/(2k)}}{k\,\vartheta_{q^k}(1)}\,
\left(\dfrac{\ii}{2\tau-1}\right)^{k/2}\,\times\cr
& \sum_{j \!\!\!\!\!\! \pmod{k}} \left(q'e^{\pi \ii}\right)^{(2j+n)^2/(2k)}\,\vartheta_{q'}^k
\left(q'^{(2j+n)/k}e^{2\pi \ii j/k}\right)\,,
\end{split} 
\end{equation*}
which is plainly equivalent to \eqref{equation:ckvartheta}.
\end{proof}


\subsubsection{Sufficient conditions for vanishing at real negative values}

The main goal of all the rest of this section is the proof of the following result.

\begin{thm}
\label{theo:limite}
Let $k$ an integer $\ge 3$ and $n$ an integer lying between $0$ and $k-1$, and
suppose that $q$ tends to $-1$ within the real interval $]-1,0[$. 
\begin{itemize}
\item One has $\displaystyle\lim_{q\to-1}\gamma_{k,n}(q)=0$ if, and only if, $k$
is even and $k - 2n \equiv 2 \pmod 4$.
\item Otherwise,
$\displaystyle\lim_{q\to-1}\left\vert\gamma_{k,n}(q)\right\vert = + \infty$.
More precisely, one has
$\displaystyle\lim_{q\to-1}(-1)^{k'-n'}\,\gamma_{k,n}(q) = + \infty$ if $(k,n)$
can be expressed by means of a pair of integers $(k',n')$ in one of the following manners:
\begin{enumerate}
 \item $k=4k'$, $n=2n'$;
 \item $k=4k'+2$, $n=2n'+1$;
 \item $k=4k'+1$, $n\in\{2n',2n'+1\}$;
 \item $k=4k'-1$, $n\in\{2n',2n'-1\}$.
\end{enumerate}
\end{itemize}
\end{thm}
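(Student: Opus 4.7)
The plan is to derive a precise asymptotic for $\gamma_{k,n}(q)$ as $q\to -1$ within $]-1,0[$, directly from the modular formula \eqref{equation:ckvartheta} of Proposition \ref{prop:tkl-1}. I parametrise $\tau = 1/2 + \ii s$, $s\to 0^+$; then $q = -e^{-2\pi s}$, $\ii/(2\tau-1) = 1/(2s)$, and the dual variable $q' = -e^{-\pi/(2s)}$ tends to $0^-$. The factor $(\ii/(2\tau-1))^{k/2} = (2s)^{-k/2}$ diverges polynomially; $(qe^{-\pi\ii})^{n(n-k)/(2k)}\to 1$; and $e^{\pi\ii(k-2n)/4}$ is a fixed eighth root of unity. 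What remains is to control $\vartheta_{q^k}(1)$ and the sum $S_{k,n}(q)$.

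For $\vartheta_{q^k}(1)$ I apply Jacobi's modular transformation to $\vartheta_3(0\vert k\tau)$, distinguishing $k\pmod 4$. When $k\equiv 0\pmod 4$, $\vartheta_{q^k}(1)\sim 1/\sqrt{ks}$; when $k\equiv 2\pmod 4$, an intermediate $\vartheta_2$-type sum appears and $\vartheta_{q^k}(1)\sim (2/\sqrt{ks})\,e^{-\pi/(4ks)}$ is exponentially small; when $k$ is odd, an analogous argument gives $\vartheta_{q^k}(1)\sim C_k/\sqrt{s}$ with an explicit nonzero $C_k$. For $S_{k,n}(q)$, substituting $y_j := q'^{(2j+n)/k}\,e^{2\pi\ii j/k}$ and expanding $\vartheta_{q'}^k(y_j) = \sum_{\m\in\Z^k} q'^{(m_1^2+\cdots+m_k^2)/2}\,y_j^{m_1+\cdots+m_k}$, then combining with the prefactor $(q'e^{\pi\ii})^{(2j+n)^2/(2k)}$, the total $q'$-exponent of each monomial equals $(1/(2k^2))\sum_i(km_i + 2j + n)^2 \geq 0$, with equality iff $k\mid 2j+n$ and $m_1 = \cdots = m_k = -(2j+n)/k$.

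Therefore the constant term (in $|q'|^{1/k}$) of $S_{k,n}(q)$ is
\begin{equation*}
A_{k,n} := \sum_{\substack{j \!\!\!\! \pmod k \\ k\,\mid\,2j+n}} e^{\pi\ii(n^2 - 4j^2)/(2k)},
\end{equation*}
and a short residue calculation shows $A_{k,n} = 0$ exactly when $k$ is even and $n$ odd (no $j$ exists) or when $k\equiv 2\pmod 4$ and $n$ even (the two solutions $j_0, j_0+k/2$ produce opposite phases via $(-\ii)^k = -1$). Putting everything together, in the generic (polynomial) regime one gets $\gamma_{k,n}(q)\sim s^{(1-k)/2}\cdot e^{\pi\ii(k-2n)/4}\cdot A_{k,n}\cdot(\text{explicit positive factor})$; a direct tabulation on the residue classes of $(k,n)\pmod 8$ shows that the product $e^{\pi\ii(k-2n)/4}\,A_{k,n}$ is a real number, whose sign matches $(-1)^{k'-n'}$ in each of items (1)--(4).

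The vanishing cases are those with $k$ even and $k - 2n\equiv 2\pmod 4$. The main obstacle is to separate these cleanly from the divergent cases in the sub-regime $k\equiv 2\pmod 4$, where $1/\vartheta_{q^k}(1)$ contributes an exponentially large factor $e^{\pi/(4ks)}$ that could \textit{a priori} resurrect a vanishing leading coefficient. A careful analysis of the next-order term in $S_{k,n}(q)$ resolves this: when $A_{k,n}=0$ through phase cancellation ($n$ even) the subleading exponent is $\geq 1/2$ (move a single $m_i$ by $\pm 1$), so $e^{\pi/(4ks)}\cdot e^{-\pi/(4s)}\to 0$ and $\gamma_{k,n}(q)\to 0$; when $A_{k,n}=0$ through absence of solutions ($n$ odd) the subleading exponent is exactly $1/(2k)$ (achieved for $2j+n\equiv\pm 1\pmod k$, which requires $n$ odd), so the exponentials $e^{\pi/(4ks)}\cdot e^{-\pi/(4ks)}$ exactly match and produce a finite nonzero coefficient with polynomial divergence $s^{(1-k)/2}$. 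Once this matching of exponential scales is in place, Theorem \ref{theo:gamma} packages the full asymptotic uniformly, and Theorem \ref{theo:limite} follows by sign inspection case by case.
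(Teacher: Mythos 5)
Your proposal follows essentially the same route as the paper's proof: the modular formula of Proposition \ref{prop:tkl-1}, the four-case asymptotics of $\vartheta_{q^k}(1)$ according to $k \pmod 4$, and a minimal-exponent analysis of $S_{k,n}(q)$ --- your completion of the square $\frac{1}{2k^2}\sum_i(km_i+2j+n)^2$ is precisely the paper's grouping by $\alpha=(2j+n)/k$ and its expansion \eqref{equation:Aalpha1} of $s_\alpha^k$, and your $A_{k,n}$ reproduces the leading terms of Proposition \ref{prop:S}. Two points need repair before this is a proof. First, in the case $k\equiv 2\pmod 4$, $n$ even, your claimed subleading exponent $\ge 1/2$ is incorrect: you only perturbed the $m_i$ at the two $j$'s with $k\mid 2j+n$, but the $j$'s with $2j+n\equiv\pm2\pmod k$ contribute the exponent $2/k$, which is $<1/2$ as soon as $k\ge 6$ (the smallest relevant case). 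The conclusion survives only because $2/k>1/(2k)$, so that $e^{\pi/(4ks)}\,\lmod q'\rmod^{2/k}=e^{-3\pi/(4ks)}\to 0$; this is the inequality you actually need to state. Second, in case (2) ($k\equiv2\pmod 4$, $n$ odd) the polynomial divergence hinges on the coefficient of $\lmod q'\rmod^{1/(2k)}$ in $S_{k,n}(q)$ being nonzero: there are four residues $j$ with $2j+n\equiv\pm1\pmod k$ and their phases could a priori cancel. The paper computes this coefficient explicitly (formula \eqref{equation:01}, giving $2(1-\ii^k)=4$), and similarly your assertion that $e^{\pi\ii(k-2n)/4}A_{k,n}$ is real with sign $(-1)^{k'-n'}$ is exactly the content of Theorem \ref{theo:gamma}; both computations are genuine case-by-case verifications that must be carried out, not just announced.
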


Proof of theorem \ref{theo:limite} comes through a series of lemmas and remarks. We now
give a direct consequence on zeroes of $\gamma_{k,n}(q)$. Assume that $k \ge 3$ and
$n$ lies between $0$ and $k-1$. Applying theorem \ref{theo:limite}, we find that
$\gamma_{k,n}(q)\to-\infty$ for
$q\to -1$ only in the following cases: \\
(i) $k \equiv 0$ or $4 \pmod 8$ and $n \equiv 2$ or $0 \pmod 4$ respectively; \\
(ii) $k \equiv 2$ or $6 \pmod 8$ and $n \equiv 3$ or $1 \pmod 4$ respectively; \\
(iii) $k \equiv 1$ or $5 \pmod 8$ and $n \equiv 2$, $3$ or $0$, $1 \pmod 4$
respectively; \\
(iv) $k \equiv 3$ or $7 \pmod 8$ and $n \equiv 0$, $3$ or $2$, $1 \pmod 4$
respectively. \\

In all these cases $\gamma_{k,n}(q)$ admits a zero somewhere over $\left]-1,0\right[$.
From that point on, we easily recover corollary \ref{cor:CSAnnulation} and its consequences.


\subsubsection{Application of modular forms to $\vartheta_{q^k}(1)$}

Suppose that $\tau \to \displaystyle \frac 1 2$ and $q \to -1$. More precisely,
write $\tau = \displaystyle \frac 1 2 + \ii t$ with $t > 0$; then:
\begin{equation}
\label{equation:ttau}
q = e^{\pi \ii - 2 \pi t},\quad  \tau' =
\displaystyle -\frac 1 2 + \frac{\ii}{4t}\,,\quad q'\displaystyle=e^{-\pi \ii-{\pi}/{2t}}\,.
\end{equation} 
It follows that $q'$ will tend to $0$ exponentally fast as $t \to 0^+$. For the sake
of simplicity shall denote $\varnothing$ any function  $f$ defined over $]0,+\infty[$
and such that $f(t) = O(e^{-\kappa/t})$ for $t \to 0$ for some $\kappa > 0$. This being
said, taking $x := 1$ in \eqref{equation:vartheta_3}, one gets:
\begin{equation}
\label{equation:theta1}
\vartheta_q(1) = \frac{e^{\pi \ii/4}}{\sqrt{2t}}\,\left(1+\varnothing\right)\,.
\end{equation}
This will be generalized as follows.

\begin{lem}
Let $k \in \N^*$. If $q = e^{\pi \ii-2\pi t}$ with $t \to 0^+$, we have:
\begin{equation}
\label{equation:thetak1}
\vartheta_{q^k}(1)=\left\{
 \begin{array}{ll}\displaystyle
  \frac1{\sqrt{kt}}\,(1+\varnothing)&k\equiv 0\pmod 4;\cr\cr\displaystyle
  \frac{e^{\pi \ii/4}}{{\sqrt{2kt}}}\,(1+\varnothing)&k\equiv 1\pmod 4;\cr\cr\displaystyle
  \frac2{\sqrt{kt}}\,e^{-1/(4kt)}\,(1+\varnothing)&k\equiv 2\pmod 4;\cr\cr\displaystyle
  \frac{e^{-\pi \ii/4}}{{\sqrt{2kt}}}\,(1+\varnothing)&k\equiv 3\pmod 4.\cr
\end{array}
\right.
\end{equation} 
\end{lem}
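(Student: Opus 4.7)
The plan is to reduce $\vartheta_{q^k}(1) = \vartheta_3(0\vert k\tau)$ modulo the period $2$ of $\vartheta_3(0\vert\cdot)$, and then apply in each residue class an analogue of the imaginary transformation that already produced \eqref{equation:theta1}. Since $k\tau = k/2 + ikt$ and $\vartheta_3(0\vert\sigma+2) = \vartheta_3(0\vert\sigma)$, the reduction of $k\tau$ modulo $2$ yields the four representatives $ikt$, $1/2+ikt$, $1+ikt$, and $-1/2+ikt$, corresponding respectively to $k \equiv 0,1,2,3 \pmod 4$. The convention that $\varnothing$ denotes quantities that are $O(e^{-\kappa/t})$ absorbs all subleading exponential contributions in each case, so only the leading term of the transformed series has to be identified.

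For $k \equiv 1 \pmod 4$, the representative $1/2 + ikt$ is structurally identical to the $k=1$ input in \eqref{equation:theta1} upon replacing $t$ by $kt$, so that formula applies verbatim and gives $e^{\pi i/4}/\sqrt{2kt}\cdot(1+\varnothing)$. For $k \equiv 3 \pmod 4$, the representative $-1/2 + ikt$ is the image of the preceding one under $\sigma \mapsto -\bar\sigma$; since the $\vartheta_3$-series has real coefficients, this symmetry produces the conjugate phase $e^{-\pi i/4}$ without further work. For $k \equiv 0 \pmod 4$, the representative $ikt$ is purely imaginary, and the specialization of \eqref{equation:vartheta_3} to $x=1$ applied with $\tau$ replaced by $ikt/2$ (together with a careful branch choice of the square root) amounts to Jacobi's imaginary transformation $\vartheta_3(0\vert is) = s^{-1/2}\vartheta_3(0\vert i/s)$, which immediately gives $(kt)^{-1/2}(1+\varnothing)$ because $\vartheta_3(0\vert i/(kt)) = 1 + 2e^{-\pi/(kt)} + \cdots$.

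The delicate case is $k \equiv 2 \pmod 4$: here the representative $1 + ikt$ yields
$$
\vartheta_3(0\vert 1+ikt) = \sum_{n\in\Z} (-1)^n e^{-\pi n^2 kt},
$$
which is a $\vartheta_4$-type rather than $\vartheta_3$-type series, and therefore calls for the companion imaginary transformation $\vartheta_4(0\vert is) = s^{-1/2}\vartheta_2(0\vert i/s)$, with $\vartheta_2(0\vert i/s) = 2\sum_{n \ge 0} e^{-\pi(n+1/2)^2/s}$. The dominant contribution is then $2e^{-\pi/(4kt)}$ rather than $1$, producing the exponentially small leading asymptotic claimed in the statement.

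I expect the main technical obstacle to be precisely this last case. The companion $\vartheta_4 \to \vartheta_2$ formula is not literally written down in the excerpt, and its derivation requires either specializing the general Rademacher modular transformation of $\vartheta_{\mu,\nu}$ to the characteristics $(1,1)$ instead of the $(0,2)$ used for $\vartheta_3$, or running a one-line Poisson-summation computation analogous to the one that produced \eqref{equation:vartheta_3}. Care must then be taken to track the half-integer shifts in $\tau$ and the corresponding eighth-root-of-unity phase factors so that the final normalizations $1$, $e^{\pm i\pi/4}$, and the absent phase in the $k\equiv 2$ case come out exactly as stated.
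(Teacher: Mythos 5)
Your proof is correct and follows essentially the same route as the paper's: reduce $\vartheta_{q^k}(1)=\vartheta_3(0\,\vert\,k/2+kt\ii)$ modulo the periodicity in $\tau$ to four representatives indexed by $k \bmod 4$, then apply an imaginary transformation in each case. Your treatment of $k\equiv 0,1,2 \pmod 4$ coincides with the paper's (which likewise invokes $\vartheta_3(0\,\vert\,\ii s)=s^{-1/2}\vartheta_3(0\,\vert\,\ii/s)$ and $\vartheta_4(0\,\vert\,\ii s)=s^{-1/2}\vartheta_2(0\,\vert\,\ii/s)$, citing Rademacher (79.9); the ``companion formula'' you flag as the main obstacle is therefore just a citation away, not a derivation you need to redo). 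The one genuine divergence is $k\equiv 3\pmod 4$: the paper obtains $\vartheta_4(0\,\vert\,\tfrac12+s\ii)=\tfrac{e^{-\pi\ii/4}}{\sqrt{2s}}\,\vartheta_4(0\,\vert\,-\tfrac12+\tfrac{\ii}{s})$ by specializing the general $\vartheta_{\mu,\nu}$ transformation to the characteristics $(2,1)$ and then reducing $\vartheta_{2,1}$ to $-\vartheta_4$, whereas you observe that $\vartheta_3(0\,\vert\,-\bar\sigma)=\overline{\vartheta_3(0\,\vert\,\sigma)}$ and simply conjugate the $k\equiv1$ asymptotics; since $t$ is real and $\overline{\varnothing}$ is again $\varnothing$, this is legitimate and noticeably shorter than the paper's computation. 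Two minor points: the phrase ``apply \eqref{equation:vartheta_3} with $\tau$ replaced by $\ii kt/2$'' is not literally right, since that transformation is $\tau\mapsto-\tau/(2\tau-1)$ rather than $\tau\mapsto-1/\tau$ --- but the formula you actually use is the correct one and is what the paper cites; and your exponent $e^{-\pi/(4kt)}$ in the $k\equiv2$ case agrees with the paper's own derivation, the $e^{-1/(4kt)}$ in the displayed statement being a typo there.
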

\begin{proof}
We first write
$\vartheta_{q^k}(1)=
\vartheta_3(0\vert k\tau)=\vartheta_3(0\vert \displaystyle \frac k2+{kt}\ii)$.
Taking in account formula \cite[p.181, (81.12)]{RademacherTopics}, we obtain 
the following expressions:
\begin{equation}
\label{equation:thetak1a}
  \vartheta_{q^k}(1)=\left\{
 \begin{array}{ll}\displaystyle
  \vartheta_3(0\,\vert\, kt\ii)&k\equiv 0\pmod 4;\cr\cr\displaystyle
\vartheta_3(0\,\vert\, \frac12+kt\ii)&k\equiv 1\pmod 4;\cr\cr\displaystyle
  \vartheta_4(0\,\vert \,kt\ii)&k\equiv 2\pmod 4;\cr\cr\displaystyle
  \vartheta_4(0\,\vert\, \frac12+ kt\ii)&k\equiv 3\pmod 4.\cr
 \end{array}
\right.
\end{equation} 
If $k \equiv 0 \pmod 4$, looking at the first relation of \eqref{equation:thetak1a}
and the second modular relation from \cite[p. 177, (79.9)]{RademacherTopics} implies
that 
$$
\vartheta_3(0\,\vert\,kt\ii) =
\dfrac1{\sqrt{kt}}\,\vartheta_3(0\,\vert\,\frac{\ii}{kt})
= \dfrac1{\sqrt{kt}}\,\sum_{n\in\Z}e^{-\pi n^2/(kt)}\,,
$$
whence the first asymptotic estimation stated in \eqref{equation:thetak1}. \\
If $k\equiv 1\pmod 4$, one obtains the corresponding asymptotic estimate
\eqref{equation:thetak1} from replacing $t$ by $k t$ in \eqref{equation:theta1}. \\
If $k\equiv 2\pmod 4$, consider the third relation in \eqref{equation:thetak1a} and
the first modular relation in \cite[p. 177, (79.9)]{RademacherTopics}: this implies
that
$$
\vartheta_4(0\,\vert\,kt\ii)=
\frac1{\sqrt{kt}}\,\vartheta_2(0\,\vert\,\frac{i}{kt})
= \frac1{\sqrt{kt}}\,\sum_{n\in\Z}e^{-\pi (n+1/2)^2/(kt)}\,,
$$
where $\vartheta_2$ is defined in \cite[p.166, (76.1)]{RademacherTopics}. One deduces:
$$
\vartheta_4(0\,\vert\,kt\ii)
=\dfrac1{\sqrt{kt}}\,\left(2e^{-\pi/(4kt)} + 2 e^{-9\pi/(4kt)} + \cdots \right) =
\dfrac2{\sqrt{kt}}\,e^{-\pi/(4kt)}\,(1+\varnothing)\,,
$$
which yields the third formula in \eqref{equation:thetak1}. \\
Last, if $k\equiv 3\pmod 4$, we shall use the last relation of \eqref{equation:thetak1a}.
Applying the last modular formula from \cite[p. 182, Theorem]{RademacherTopics} to the
quadruple $(a,b,c,d)=(-1,0,2,-1)$, we see that, for
$\tau'=\displaystyle\frac{-\tau}{2\tau-1}$:
$$
\vartheta_4(v'\,\vert\,\tau')=\ii^2\,\epsilon_2\,\sqrt{\frac{2\tau-1}\ii}\,e^{2\pi \ii vv'}\,\vartheta_{2,1}(v\,\vert\,\tau)\,,
$$
where, as before, $\epsilon_2=e^{\pi \ii/4}$ and $v$, $v'$ are the same as in
\eqref{equation:vartheta_3}. Modifying the first relation from
\cite[p. 181, (81.31)]{RademacherTopics} into the form
$\vartheta_{\mu+2,\nu}(v\,\vert\,\tau)=(-1)^\nu\,\vartheta_{\mu,\nu}(v\,\vert\,\tau)$,
we find that
$\vartheta_{2,1}(v\,\vert\,\tau)=
-\vartheta_{0,1}(v\,\vert\,\tau)=
-\vartheta_4(v\,\vert\,\tau)$.
Putting $v=0$ and $\tau=\displaystyle\frac12+t\ii$ hereabove, one draws that 
$$
\vartheta_4(0\,\vert\,\frac12+t\ii)=
\dfrac{e^{-\pi \ii/4}}{\sqrt{2t}}\,\vartheta_{4}(0\,\vert\,-\dfrac{1}{2}+\dfrac{\ii}{t})\, \cdot
$$
Expanding $\vartheta_4(0\,\vert\,\tau)$ with the help of
\cite[p.166, (76.1)]{RademacherTopics}, we have:
$$
\vartheta_{4}(0\,\vert\,-\frac{1}{2}+\frac{\ii}{t})=
\sum_{n\in\Z}(-1)^n\,e^{-\pi \ii n^2/2-\pi n^2/t}=
1+\varnothing\,.
$$
Substituting $kt$ for $t$ in the last relation, we eventually obtain the complete
asymptotic estimation \eqref{equation:thetak1}.
\end{proof}

Combining \eqref{equation:thetak1} with \eqref{equation:ckvartheta}, we deduce the
following remark.

\begin{rem}
\label{rem:gamma}
Let $k$ and $n$ integers such that $0\le n<k$ and let $\Gamma_{k,n}$ the
function defined over $]0,+\infty[$ by the relation
\begin{equation}
\label{equation:Gamma}
\Gamma_{k,n}(t) :={k^{-1/2}\,2^{-k/2}}\,e^{\pi n(n-k)t/k}\,t^{(1-k)/2}\,.
\end{equation}
If $q=e^{\pi \ii-2\pi t}$ with $t\to 0^+$, one has:
\begin{equation}
 \label{equation:ckvartheta1}
\gamma_{k,n}(q)=\left\{
 \begin{array}{ll}\displaystyle
  e^{\pi \ii(k-2n)/4}\,\Gamma_{k,n}(t)\,S_{k,n}(q)\,(1+\varnothing)&k\equiv 0\pmod 4;\cr\cr\displaystyle
  \sqrt2\,e^{\pi \ii(k-1-2n)/4}\,\Gamma_{k,n}(t)\,S_{k,n}(q)\,(1+\varnothing)&k\equiv 1\pmod 4;\cr\cr\displaystyle
  \frac{1}{2}\,e^{\pi \ii(k-2n)/4}\,\Gamma_{k,n}(t)\,S_{k,n}(q)\,e^{\pi /(4kt)}\,(1+\varnothing)&k\equiv 2\pmod 4;\cr\cr\displaystyle
  \sqrt2\,e^{\pi \ii(k+1-2n)/4}\,\Gamma_{k,n}(t)\,S_{k,n}(q)\,(1+\varnothing)&k\equiv 3\pmod 4.\cr
 \end{array}
\right.
\end{equation}
\end{rem}


\subsubsection{Asymptotic study of theta functions with modular variables}

We now are left to study the asymptotic behaviour of $S_{k,n}(q)$ for $q \to -1$.
In order to do so, let us consider expression \eqref{equation:ckvarthetaS}, and put:
\begin{equation}
 \label{equation:Aalpha}
 s_\alpha(q) = s_{k,n,\alpha}(q) =
 \left(q'e^{\pi \ii}\right)^{\alpha^2/2}\,\vartheta_{q'}
 \left((q'e^{\pi \ii})^{\alpha}e^{-\pi \ii n/k}\right)
\end{equation}
for every $\alpha\in\R$. Since $\vartheta_q(x) = 0$ for $x \in -q^{\frac12+\Z}$,
we draw the following remark:

\begin{rem}
If $n/k=1/2$ and $\alpha=-1/2$, one has:
\begin{equation}
\label{equation:Aalpha1/2}
s_{-1/2}(q)=0\,.
\end{equation}
\end{rem}

Expanding $\vartheta_{q'}$ in series with $q'=e^{-\pi i-\pi/(2t)}$ (see \eqref{equation:ttau}), we have:
\begin{equation}
\label{equation:Aalpha1}
s_\alpha(q) = \sum_{m\in\Z} e^{-\pi i m(m/2+n/k)-\pi(\alpha+m)^2/(4t)}\,,
\end{equation}
which implies that $s_{\alpha+2}(q) = s_\alpha(q)$. \\

So let us make up a ``fundamental'' system with $k$ elements as follows: 
$$
\Lambda_{k,n} \equiv \displaystyle\left \{\frac{2j+n}{k}\ \pmod{2}:j\in\Z\right\}\,.
$$ 
More precisely, we will choose $\Lambda_{k,n}$ in the following way:
\begin{equation}
  \label{equation:Lambda}
  \Lambda_{k,n} = \left\{ 
  \begin{array}{ll}
   \displaystyle
   \left \{-1+\frac2k,\ldots,0,\ldots,1\right\}&(k,n) \equiv (0,0)\,\pmod{2};\cr\cr
   \displaystyle
   \left \{-1+\frac1k,\ldots,\frac1k,\ldots,1-\frac1k\right\}&(k,n) \equiv (0,1)\,\pmod{2};\cr\cr
   \displaystyle
   \left \{-1+\frac1k,\ldots,0,\ldots,1-\frac1k\right\}&(k,n) \equiv (1,0)\,\pmod{2};\cr\cr
   \displaystyle
   \left \{-1+\frac2k,\ldots,\frac1k,\ldots,1\right\}&(k,n) \equiv (1,1)\,\pmod{2}.\cr
  \end{array}
  \right. 
\end{equation}
When $\alpha$ runs in $\Lambda_{k,n}$, the terms $s_\alpha$ defined in
\eqref{equation:Aalpha} give rise to the following identity, coming from
\eqref{equation:ckvarthetaS}:
\begin{equation}
\label{equation:SA}
S_{k,n}(q) = \sum_{\alpha\in\Lambda_{k,n}}s_\alpha^k(q)\,.
\end{equation}

Successively taking $\alpha := 0$ and $1$ in expression \eqref{equation:Aalpha1},
one finds that: 
$$
s_0(q) = 1 + 2 \sum_{m=1}^\infty\cos(mn\pi/k)\,e^{-\pi \ii m^2/2-\pi m^2/(4t)}
$$
and:
$$
s_1(q) =
e^{\pi \ii(n/k-1/2)}\,\left(1+2\sum_{m=1}^\infty(-1)^m\cos(mn\pi/k)\,e^{-\pi \ii m^2/2-\pi m^2/(4t)}\right)\,.
$$
One deduces that:
\begin{equation}
 \label{equation:Aalpha0}
 s_0^k(q) = 1 - 2k\ii\,\cos(n\pi/k)\,e^{-\pi/(4t)}+O(e^{-\pi/(2t)})
\end{equation}
and
\begin{equation}
 \label{equation:Aalpha01}
 s_1^k(q) =
 e^{\pi \ii(n-k/2)}\,\left(1+2k\ii\,\cos(n\pi/k)\,e^{-\pi/(4t)}+O(e^{-\pi/(2t)})\right)\,.
\end{equation}

Let $\alpha\in]0,1[$ and put $\alpha' := \min((\alpha-2)^2,(\alpha+1)^2) > 1$; we shall
assume that $k\ge 3$. Using expression  \eqref{equation:Aalpha1}, we obtain that
$$
s_\alpha(q)=
e^{-\pi \alpha^2/(4t)}-\ii\,e^{\pi \ii n/k-\pi(1-\alpha)^2/(4t)}+O(e^{-\pi\alpha'/(4t)})
$$
and
$$
s_{-\alpha}(q)=
e^{-\pi \alpha^2/(4t)}-\ii\,e^{-\pi \ii n/k-\pi(1-\alpha)^2/(4t)}+O(e^{-\pi\alpha'/(4t)})\,.
$$
In particular, if $\alpha=1/k$, then $\alpha'=(k+1)^2/k^2$; taking in account
relations $(1-\alpha)^2-\alpha^2=(k-2)/k$ and $\alpha'-\alpha^2=(k+2)/k$ entails:
$$
s_{1/k}(q)=
e^{-\pi /(4k^2t)}\,\left(1-\ii\,e^{\pi \ii n/k-\pi(k-2)/(4kt)}+O(e^{-\pi(k+2)/(4kt)})\right)
$$
and
$$
s_{-1/k}(q)=e^{-\pi /(4k^2t)}\,
\left(1-\ii\,e^{-\pi \ii n/k-\pi(k-2)/(4kt)}+O(e^{-\pi(k+2)/(4kt)})\right)\,.
$$
If ${k^*}=\min(2k-4,k+2)$, one gets that:
\begin{equation}
 \label{equation:Aalpha+}
 s_{1/k}^k(q)+s_{-1/k}^k(q)=
2\,e^{-\pi/(4kt)}\,\left(1-2k\ii\,\cos(n\pi/k)\,e^{-\pi(k-2)/(4kt)}+O(e^{-\pi{k^*}/(4kt)})\right)\,.
\end{equation}
In a similar way, one will find that: 
\begin{equation}
 \label{equation:Aalpha+k}
 \begin{split}
 s_{1-1/k}^k(q)+s_{-1+1/k}^k(q)=&(-1)^{k+n}\,2\,i^k\,e^{-\pi/(4kt)}\,\times\cr
 &\left(1+2ki\,\cos(n\pi/k)\,e^{-\pi(k-2)/(4kt)}+O(e^{-\pi{k^*}/(4kt)})\right)\,.
 \end{split}
\end{equation}
Moreover, if $k\ge 4$ and $\alpha\in\displaystyle\left[\frac2k,1-\frac2k\right]$:
\begin{equation}
 \label{equation:Aalpha+kk}
 s_{\alpha}^k(q)+s_{-\alpha}^k(q)=O(e^{-\pi/(kt)})\, .
\end{equation}

\begin{prop}
\label{prop:S}
Let $k\ge 3$ and $0\le n<k$. The following asymptotic estimates hold $S_{k,n}(q)$
whenever $q \displaystyle = e^{\pi \ii-2\pi t}$ and $t \to 0^+$:
 \begin{enumerate}
  \item If $(k,n) \equiv (0,0)\,\pmod{2}$:
  \begin{equation}
   \label{equation:00}
   S_{k,n}(q)=2\,e^{\pi \ii(2n-k)/4}\,\cos \frac{(2n-k)\pi}4+O(e^{-\pi/(kt)})\,.
  \end{equation}
\item If $(k,n) \equiv (0,1)\,\pmod{2}$:
  \begin{equation}
    \label{equation:01}
    S_{k,n}(q)=2\,(1-\ii^k)\,e^{-\pi/(4kt)}+O(e^{-\pi\min(k-2,4)/(4kt)})\,.
  \end{equation}
  \item If $(k,n) \equiv (1,0)\,\pmod{2}$:
  \begin{equation}
    \label{equation:10}
    S_{k,n}(q)=1-2\,\ii^k\,e^{-\pi/(4kt)}+O(e^{-\pi\min(k-1,4)/(4kt)})\,.
  \end{equation}
  \item If $(k,n) \equiv (1,1)\,\pmod{2}$:
  \begin{equation}
    \label{equation:11}
    S_{k,n}(q)=\ii^k+2\,e^{-\pi/(4kt)}+O(e^{-\pi\min(k-1,4)/(4kt)})\,.
  \end{equation}
 \end{enumerate}
\end{prop}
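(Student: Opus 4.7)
The idea is to decompose $\displaystyle S_{k,n}(q) = \sum_{\alpha \in \Lambda_{k,n}} s_\alpha^k(q)$ into singletons (when $\alpha \in \{0,1\}$) and symmetric pairs $\{\pm \alpha\}$, then apply to each kind of summand the appropriate asymptotic already established: \eqref{equation:Aalpha0} and \eqref{equation:Aalpha01} for the singletons, \eqref{equation:Aalpha+} and \eqref{equation:Aalpha+k} for the ``edge pairs'' $\{\pm 1/k\}$ and $\{\pm(1-1/k)\}$, and \eqref{equation:Aalpha+kk} for the ``interior pairs'' with $\alpha \in [2/k,\,1-2/k]$. The explicit lists in \eqref{equation:Lambda} directly tell us which of these summands actually occur: case $(0,0)$ contributes both singletons $0$ and $1$ together with interior pairs only; case $(0,1)$ contributes both edge pairs and interior pairs; case $(1,0)$ contributes the singleton $0$, only the $\{\pm(1-1/k)\}$ edge pair, and interior pairs; case $(1,1)$ contributes the singleton $1$, only the $\{\pm 1/k\}$ edge pair, and interior pairs.

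Once the decomposition is fixed, the body of the argument is trigonometric. For case $(0,0)$ I would combine $s_0^k(q) + s_1^k(q) = 1 + e^{\pi \ii(n - k/2)} + O(e^{-\pi/(4t)})$ with the identity $1 + e^{\pi \ii (n-k/2)} = 2\, e^{\pi \ii(2n-k)/4}\cos((2n-k)\pi/4)$ (just $1 + e^{2\ii\theta} = 2 e^{\ii\theta}\cos\theta$) to get \eqref{equation:00}. For case $(1,1)$ a short check distinguishing $k \equiv 1$ and $k \equiv 3 \pmod 4$ (with $n$ odd) yields $e^{\pi \ii (n-k/2)} = \ii^k$, so that $s_1^k(q)$ contributes $\ii^k$ and the $\{\pm 1/k\}$ pair contributes $2 e^{-\pi/(4kt)}$, producing \eqref{equation:11}. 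In cases $(0,1)$ and $(1,0)$ the parity hypothesis gives $(-1)^{k+n} = -1$, so the $\{\pm(1-1/k)\}$ pair contributes $-2 \ii^k e^{-\pi/(4kt)}$ via \eqref{equation:Aalpha+k}; combined with the $\{\pm 1/k\}$ contribution $+2 e^{-\pi/(4kt)}$ (case $(0,1)$) or with the singleton $s_0^k(q) = 1 + O(e^{-\pi/(4t)})$ (case $(1,0)$), one reads off \eqref{equation:01} and \eqref{equation:10}.

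The main obstacle I anticipate is the error-term bookkeeping. Three independent sources of remainder must be merged: $O(e^{-\pi/(4t)}) = O(e^{-\pi k/(4kt)})$ coming from the singletons, $O(e^{-\pi(k-1)/(4kt)})$ coming from the second-order corrections inside \eqref{equation:Aalpha+}--\eqref{equation:Aalpha+k}, and $O(e^{-\pi/(kt)}) = O(e^{-4\pi/(4kt)})$ from each of the interior pairs. Comparing the three exponents $k$, $k-1$, and $4$ (and handling separately the small cases $k \le 6$, where the second source dominates) will be necessary to confirm that the dominant remainder matches the stated bounds $O(e^{-\pi \min(k-2,4)/(4kt)})$ for the edge-pair-dominated cases $(0,1)$--$(1,1)$ and $O(e^{-\pi \min(k-1,4)/(4kt)})$ for the singleton-dominated cases $(1,0)$--$(1,1)$, while case $(0,0)$ reduces simply to $O(e^{-\pi/(kt)})$ because both singletons and interior pairs carry the same order of remainder.
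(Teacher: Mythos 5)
Your proposal is correct and follows essentially the same route as the paper: the same decomposition of $S_{k,n}$ according to the explicit lists \eqref{equation:Lambda} into the singletons $s_0^k$, $s_1^k$, the edge pairs $\{\pm 1/k\}$, $\{\pm(1-1/k)\}$ and the interior pairs, the same invocation of \eqref{equation:Aalpha0}--\eqref{equation:Aalpha+kk}, and the same elementary identities $1+e^{2\ii\theta}=2e^{\ii\theta}\cos\theta$, $e^{\pi\ii(n-k/2)}=\ii^k$ and $(-1)^{k+n}\ii^k=-\ii^k$ in the respective parity cases. Your error-term accounting (comparing the exponents $k$, $k-1$ and $4$, with the empty interior sums for small $k$) is consistent with, and in case $(0,1)$ slightly sharper than, the stated remainders.
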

\begin{proof}
We shall use the expression of $S_{k,n}(q)$ given in \eqref{equation:SA} with the
various forms of $\Lambda_{k,n}$ shown in \eqref{equation:Lambda}. \\
(1) Suppose that $(k,n) \equiv (0,0)\,\pmod{2}$; one has $k \ge 4$. Because of
the first relation of \eqref{equation:Lambda}, one can write \eqref{equation:SA}
in the following form:
$$
S_{k,n}(q) = s_0^k(q)+s_1^k(q)+\sum_{j=1}^{\frac k2-1}\left(s_{2j/k}^k(q)+s_{-2j/k}^k(q)\right)
$$
Consider \eqref{equation:Aalpha0}, \eqref{equation:Aalpha01} together with
\eqref{equation:Aalpha+kk}, and note that $\frac 1 2 \ge \frac 2 k$; then:
$$
S_{k,n}(q) = 1 + e^{\pi \ii(n-k/2)}+O(e^{-\pi/(kt)})\,,
$$
which takes us to relation \eqref{equation:00}. \\
(2) Suppose that $(k,n) \equiv (0,1)\,\pmod{2}$; again, $k\ge 4$. Thanks to the second relation
of \eqref{equation:Lambda}, expression \eqref{equation:SA} can be put in the following
form:
\begin{equation*}
 \begin{split}
 S_{k,n}(q)=&\left(s^k_{1/k}(q)+s_{-1/k}^k(q)\right)+\left(s_{1-1/k}^k(q)+s_{1/k-1}^k(q)\right)\cr
 &+\sum_{j=1}^{\frac k2-2}\left(s_{(2j+1)/k}^k(q)+s_{-(2j+1)/k}^k(q)\right)\,.
\end{split}
\end{equation*}
Since $(-1)^{k+n}\,\ii^k=-\ii^k$, one obtains directly \eqref{equation:01}, taking
in account relations \eqref{equation:Aalpha+}, \eqref{equation:Aalpha+k} together with
\eqref{equation:Aalpha+kk}. \\
(3) Suppose that $(k,n) \equiv (1,0)\,\pmod{2}$. By virtue of the third relation of
\eqref{equation:Lambda}, one transforms \eqref{equation:SA} into:
\begin{equation*}
 \begin{split}
 S_{k,n}(q)=&s^k_0(q)+\left(s_{1-1/k}^k(q)+s_{1/k-1}^k(q)\right)+\sum_{j=1}^{\frac {k-3}2}\left(s_{2j/k}^k(q)+s_{-2j/k}^k(q)\right)\,.
\end{split}
\end{equation*}
Relation \eqref{equation:10} flows immediately from the estimations contained in
\eqref{equation:Aalpha0}, \eqref{equation:Aalpha+k} combined with
\eqref{equation:Aalpha+kk}. \\
(4) Last, suppose that $(k,n) \equiv (1,1)\,\pmod{2}$. By virtue of the last relation of
\eqref{equation:Lambda}, one expresses \eqref{equation:SA} in the following form:
\begin{equation*}
 \begin{split}
 S_{k,n}(q)=&s^k_1(q)+\left(s_{1/k}^k(q)+s_{-1/k}^k(q)\right)+\sum_{j=1}^{\frac {k-3}2}\left(s_{(2j+1)/k}^k(q)+s_{-(2j+1)/k}^k(q)\right)\,.
\end{split}
\end{equation*}
Meanwhile, also note that $e^{\pi \ii(n-k/2)}=(-1)^{n-k}\,\ii^k=\ii^k$. By way of
consequence, relation \eqref{equation:11} directly follows from formulas
\eqref{equation:Aalpha01}, \eqref{equation:Aalpha+} with \eqref{equation:Aalpha+kk}.
\end{proof}


\subsubsection{End of the proof of theorem \ref{theo:limite}}

Considering remark \ref{rem:gamma} together with proposition \ref{prop:S}, we obtain
the following result.

\begin{thm}
\label{theo:gamma}
Let $k\ge 3$ and $0\le n<k$ and let $\Gamma_{k,n}$ the function defined in
\eqref{equation:Gamma}. We have the following asymptotic estimations for
$\gamma_{k,n}(q)$ when $q\displaystyle=e^{\pi i-2\pi t}$ and $t\to0^+$:
\begin{enumerate}
\item If $k$ is even and $k- 2n\equiv2\pmod 4$, then
$\gamma_{k,n}(q)=\varnothing$.
\item If $k=4k'$ and $n=2n'$ or if $k=4k'+2$ and $n=2n'+1$, where $k'$, $n'\in\Z$, then:
\begin{equation}
\label{equation:g0}
\gamma_{k,n}(q)=     (-1)^{k'-n'}\,2\,\Gamma_{k,n}(t)\,\left(1+\varnothing\right)\,.
\end{equation}
\item If $k=4k'+1$ and $n\in\{2n',2n'+1\}$ or if $k=4k'-1$ and
$n\in\{2n',2n'-1\}$, where $k'$, $n'\in\Z$, then:
\begin{equation}
\label{equation:g1}\gamma_{k,n}(q)=
(-1)^{k'-n'}\,\sqrt2\, \Gamma_{k,n}(t)\,\left(1+\varnothing\right)\,.
\end{equation}
\end{enumerate}
\end{thm}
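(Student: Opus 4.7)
The plan is to deduce this statement by combining Remark~\ref{rem:gamma}, which expresses $\gamma_{k,n}(q)$ as a product of $\Gamma_{k,n}(t)$, a unimodular phase factor, and the finite sum $S_{k,n}(q)$, with the four asymptotic estimates of $S_{k,n}(q)$ supplied by Proposition~\ref{prop:S}. Since Remark~\ref{rem:gamma} partitions cases by $k\pmod 4$ while Proposition~\ref{prop:S} partitions them by the parities of $k$ and $n$, the natural refinement is the joint case analysis indexed by $(k\pmod 4,\,n\pmod 2)$; this splits into six scenarios distributed among the three items of the theorem.

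For item~(2), there are two scenarios to dispatch. When $k=4k'$ and $n=2n'$ one is in case $(k,n)\equiv(0,0)\pmod 2$ of Proposition~\ref{prop:S}; since $2n-k=4(n'-k')$, the exponential $e^{\pi\ii(2n-k)/4}$ and the cosine $\cos((2n-k)\pi/4)$ each evaluate to $(-1)^{n'-k'}$, whence $S_{k,n}(q)=2+\varnothing$. Multiplying by the phase $e^{\pi\ii(k-2n)/4}=(-1)^{k'-n'}$ from Remark~\ref{rem:gamma} gives \eqref{equation:g0}. When $k=4k'+2$ and $n=2n'+1$, case $(0,1)$ applies and $k\equiv 2\pmod 4$ forces $\ii^k=-1$, so $1-\ii^k=2$ and $S_{k,n}(q)=4\,e^{-\pi/(4kt)}(1+\varnothing)$. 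The amplification $e^{\pi/(4kt)}$ appearing in Remark~\ref{rem:gamma} for $k\equiv 2\pmod 4$ exactly cancels this exponential, the prefactor $\tfrac12$ combines with $4$ to yield $2$, and the phase again provides $(-1)^{k'-n'}$, so \eqref{equation:g0} follows.

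For item~(3), which covers odd $k$, the argument is shorter because in both lines $(1,0)$ and $(1,1)$ of Proposition~\ref{prop:S} the leading term is a nonvanishing constant with only exponentially small corrections: respectively $1+\varnothing$ and $\ii^k(1+\varnothing)$. In each of the four parity possibilities $k=4k'\pm 1$ and $n\in\{2n',2n'\mp 1\}$ a direct computation will show that the phase $e^{\pi\ii(k\mp 1-2n)/4}$ supplied by Remark~\ref{rem:gamma}, multiplied by the relevant leading term of $S_{k,n}(q)$, simplifies to $(-1)^{k'-n'}$; combined with the prefactor $\sqrt 2$ of Remark~\ref{rem:gamma}, this produces \eqref{equation:g1}.

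The main obstacle lies in item~(1), the vanishing case, which splits into two subcases and requires careful attention to the amplification factor. If $k\equiv 0\pmod 4$ then $n$ is odd, Proposition~\ref{prop:S}(2) applies, and $\ii^k=1$ annihilates the coefficient $1-\ii^k$, leaving $S_{k,n}(q)=\varnothing$; since no factor $e^{\pi/(4kt)}$ intervenes for $k\equiv 0\pmod 4$, the conclusion is immediate. If instead $k\equiv 2\pmod 4$ then $n$ is even, Proposition~\ref{prop:S}(1) applies, and the hypothesis $k-2n\equiv 2\pmod 4$ forces $(2n-k)\pi/4$ to be an odd multiple of $\pi/2$, so $\cos((2n-k)\pi/4)=0$ kills the main term and only a residue $O(e^{-\pi/(kt)})$ survives. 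The delicate point is that this residue must still defeat the factor $e^{\pi/(4kt)}$ introduced by Remark~\ref{rem:gamma}; since $\pi/(kt)>\pi/(4kt)$, the product is $O(e^{-3\pi/(4kt)})=\varnothing$, and $\gamma_{k,n}(q)=\varnothing$ follows, completing the plan.
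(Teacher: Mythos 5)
Your proposal is correct and follows essentially the same route as the paper: it combines Remark~\ref{rem:gamma} with the four cases of Proposition~\ref{prop:S}, and the case-by-case phase and cancellation computations (including the key point that the amplification $e^{\pi/(4kt)}$ for $k\equiv 2\pmod 4$ is beaten by the $O(e^{-\pi/(kt)})$ remainder in the vanishing case) match the paper's argument. The only cosmetic omission is the implicit use of the fact that $\Gamma_{k,n}(t)$ grows only polynomially in $1/t$, so that an exponentially small $S_{k,n}(q)$ indeed forces $\gamma_{k,n}(q)=\varnothing$.
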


\begin{proof}
Suppose first that $k\equiv0 \pmod 4$ and $n\equiv 1 \pmod 2$. As $k\ge 4$, relation  \eqref{equation:01}
shows that $S_{k,n}(q)=O(e^{-\pi\min(k-2,4)/(4kt)})=O(e^{-\pi/(2kt)})$; one deduces with the help of first and third
relations of \eqref{equation:ckvartheta1}, that
$\gamma_{k,n}(q)=O(\Gamma_{k,n}(q)\,e^{-\pi/(4kt)})=\varnothing$. Moreover, if
$k\equiv2 \pmod 4$ and $n\equiv 0 \pmod 2$, relation \eqref{equation:00} combined with the third formula of
\eqref{equation:ckvartheta1} implies that
$\gamma_{k,n}=O(\Gamma_{k,n}(q)\,e^{-\pi/(4kt)})=\varnothing$. Thus the first
statement of the theorem is obtained. \\
If $k=4k'$ and $n=2n'$ with $k'$, $n'\in\Z$, one simultaneously considers
the first relation of \eqref{equation:ckvartheta1} and formula \eqref{equation:00}
and one finds that 
$$
\gamma_{k,n}(q)=2\Gamma_{k,n}\cos(n'-k')\pi+\varnothing,
$$
which is equivalent to \eqref{equation:g0}. The case of $k=4k'+2$ and $n=2n'+1$ may be treated in a similar way, using the third relation of \eqref{equation:ckvartheta1} and formula \eqref{equation:01}. \\
In the case $k=4k'+1$ and $n\in\{2n',2n'+1\}$ one concludes directly by
combining the second relation of \eqref{equation:ckvartheta1} with \eqref{equation:10}
or \eqref{equation:11}. \\ 
As for the remaining cases, if $k=4k'-1$ and $n=2n'$, relation \eqref{equation:10}
and the last formula of \eqref{equation:ckvartheta1} imply \eqref{equation:g1}. Last,
if $k=4k'-1$ and $n=2n'-1$, relation \eqref{equation:11} and the last formula of
\eqref{equation:ckvartheta1} also imply \eqref{equation:g1}.
\end{proof}

\begin{proof} [Proof of theorem \ref{theo:limite}]
It follows directly from theorem \ref{theo:gamma}.
\end{proof}


\bibliography{PowersTheta}

\end{document}